
\documentclass[11pt]{amsart}

\usepackage{xy}
\usepackage{array}
\usepackage{amssymb}
\usepackage{color}
\xyoption{all}

\newtheorem{lemma}{Lemma}[section]
\newtheorem{theorem}[lemma]{Theorem}
\newtheorem{corollary}[lemma]{Corollary}

\newtheorem{proposition}[lemma]{Proposition}

\newtheorem{remark}[lemma]{Remark}

\newcommand{\Hom}{\operatorname{Hom}}

\newcommand{\Ext}{\operatorname{Ext}}

\newcommand{\End}{\operatorname{End}}
\newcommand{\op}{{\textit{op}}}
\renewcommand{\ker}{\operatorname{ker}}
\newcommand{\coker}{\operatorname{coker}}
\newcommand{\coim}{\operatorname{coim}}
\newcommand{\im}{\operatorname{im}}
\newcommand{\add}{\operatorname{add}}
\newcommand{\module}{\operatorname{mod}}

\newcommand{\U}{\mathcal U}
\newcommand{\V}{\mathcal V}
\newcommand{\C}{\mathcal C}
\newcommand{\D}{\mathcal D}
\newcommand{\X}{\mathcal X}
\renewcommand{\S}{\mathcal S}

\newcommand{\CX}{\C/{\X_T}}
\newcommand{\CXR}{(\C/{\X_T})_{\R}}

\newcommand{\A}{\mathcal A}
\newcommand{\R}{\mathcal R}

\textwidth 140mm
\oddsidemargin 0.3in

\begin{document}

\title[From triangulated categories to module categories II]{From
  triangulated categories to module categories via localisation II:
  calculus of fractions}

\author[Buan]{Aslak Bakke Buan}
\address{
Department of Mathematical Sciences,
Norwegian University of Science and Technology,
7491 Trondheim,
NORWAY 
}
\email{aslakb@math.ntnu.no}

\author[Marsh]{Robert J. Marsh}
\address{School of Mathematics \\
University of Leeds \\
Leeds LS2 9JT \\
England
}
\email{marsh@maths.leeds.ac.uk}

\keywords{triangulated category, preabelian category, abelian category, semi-abelian, integral, calculus of fractions, Gabriel-Zisman localisation, cluster category, rigid object,
module, representation theory}

\begin{abstract}
We show that the quotient of a Hom-finite triangulated category $\C$ by the kernel of the
functor $\Hom_{\C}(T,\,-)$, where $T$ is a rigid object, is preabelian. We further show that the
class of regular morphisms in the quotient admit a calculus of left and right fractions. It
follows that the Gabriel-Zisman localisation of the quotient at the class of regular morphisms
is abelian. We show that it is equivalent to the category of finite dimensional modules
over the opposite of the endomorphism algebra of $T$ in $\C$.
\end{abstract}

\subjclass[2010]{Primary 16D90, 18E05, 18E30, 18E35; Secondary 13F60, 16G10}
\date{20 September 2011}


\maketitle

\section*{Introduction}

Let $k$ be a field and $\C$ a skeletally small, triangulated $\Hom$-finite $k$-category which
is Krull-Schmidt and has Serre duality.
A standard example of such a category is the bounded derived category
of finite dimensional modules over a finite dimensional algebra
of finite global dimension (see~\cite{happel}).
In this case, the triangulated category is obtained from the abelian category
of modules by Gabriel-Zisman (or Verdier) localisation of the quasi-isomorphisms
in the bounded homotopy category of complexes of modules.

Here, our approach is the other way around.
Given a triangulated category $\C$
as above, we are interested in gaining information about related abelian categories.
We are particularly interested in the module categories over (the opposites of)
endomorphism algebras of objects in $\C$.
An object $T$ in $\C$ satisfying $\Ext^1(T,T)=0$ is known as a
\emph{rigid} object. In this case it is known~\cite{bm10} that the category of finite
dimensional modules over $\End(T)^{\op}$ can be obtained as a Gabriel-Zisman localisation
of $\C$, formally inverting the class $\S$ of maps which are inverted by the functor
$\Hom_{\C}(T,\,-)$. However, the class $\S$ does not admit a calculus of left or right
fractions in the sense of~\cite[Sect.\ I.2]{gz} (see also~\cite[Sect.\ 3]{krause}).

If $T$ is a cluster-tilting object then, by a result of Koenig-Zhu~\cite[Cor.\ 4.4]{kz},
the additive quotient $\C/\Sigma T$, where $\Sigma$ denotes the suspension functor of $\C$,
is equivalent to $\module \End_{\C}(T)^{\op}$
(see also~\cite[Prop.\ 6.2]{iy} and~\cite[Sect.\ 5.1]{kr}; the case where $\C$
is $2$-Calabi-Yau was proved in~\cite[Prop. 2.1]{kr}, generalising~\cite[Thm. 2.2]{bmr}).
However, when $T$ is rigid, this is no longer the case in general. It is natural to
consider instead the quotient $\CX$ where $\X_T$ is the class of objects in $\C$
sent to zero by the functor $\Hom_{\C}(T,\,-)$, since, in the cluster-tilting case,
$\X_T=\add \Sigma T$. However, one does not obtain the module category this
way, since in general $\CX$ is not abelian.

Our approach here is to show first that $\CX$ is preabelian, using some arguments
generalising those of Koenig-Zhu~\cite{kz}. This means that, in addition to
$\CX$ being an additive category, every morphism in $\CX$ has a kernel and a
cokernel.
This category in general possesses regular morphisms which are not isomorphisms
(i.e.\ morphisms which are both monomorphisms and epimorphisms but which do not
have inverses), so it cannot be abelian in general.

However, we show that $\CX$ does have a nice property.
It is \emph{integral}, i.e.\ the pullback of any epimorphism (respectively,
monomorphism), is again an epimorphism (respectively, monomorphism).
This allows us to apply a result of Rump~\cite[p173]{r01} which implies that
$\CXR$, the localisation of the category $\CX$ at the class $\R$ of regular
morphisms, is abelian. We assume that $\C$ is skeletally small to ensure
that the localisation exists. Furthermore, by the same reference,
the class $\R$ admits a calculus of left and right fractions.

We go on to show that the projective objects in $\CXR$ are, up to isomorphism,
exactly the objects induced by the objects in the additive subcategory of $\C$ generated by $T$. This implies our main result:
\vskip 0.3cm
\noindent \textbf{Theorem.}
\emph{
Let $\C$ be a skeletally small, Hom-finite, Krull-Schmidt triangulated category with Serre duality, containing a rigid object $T$. Let $\X_T$ denote the class of objects $X$ in $\C$ such that $\Hom_{\C}(T,X)=0$.
Let $\R$ denote the class of regular morphisms in $\CX$.
Then $\R$ admits a calculus of left and right fractions.
Let $\CXR$ denote the localisation of $\CX$ at $\R$.
Then $$\CXR \simeq \module \End_{\C}(T)^{\op}.$$
}

Let $\S$ denote the class of maps in $\C$ which are inverted by
$\Hom_{\C}(T, -)$, and let $\underline{\S}$ denote the image of this
class in $\CX$. Then $\R = \underline{\S}$, and the localisation functor
$L_{\S} \colon \C \to \C_{\S}$ factors through $\CX$.
The main result of~\cite{bm10} was the construction of
an equivalence $G$ from $\C_{\S}$  to
$\module \End_{\C}(T)^{\op}$,
such that $\Hom_{\C}(T, - )=G L_{\S}$.
Our theorem above can be seen a refinement of this.
It was noted in~\cite{bm10} that $\S$ does not admit a calculus of
left or right fractions, thus we can observe that the advantage of passing
first to the quotient $\CX$ is that the subsequent localisation does then
admit such a calculus.

We note that~\cite{nakaoka} contains results obtaining abelian
categories as subquotients of triangulated categories; we give an
explanation of the relationship between the results obtained here
and those in~\cite{nakaoka} in Section~\ref{s:cotorsionpairs}.
We also remark that A. Beligiannis has recently informed us that,
in subsequent work using a different approach,
he has been able to generalise our main result
to the case of a functorially finite rigid subcategory.

There are interesting parallels between our approach here and the
construction of the derived category of an abelian category $\A$.
We follow~\cite{keller},~\cite[III.2,\, III.4]{gm}.
The derived category of $\A$ can be defined (following Grothendieck) as the
Gabriel-Zisman localisation of the category $C(\A)$ of complexes over $\A$
at the class of quasi-isomorphisms. This class does not in general admit a
calculus of left and right fractions.
However, the more commonly used construction (due to Verdier) of the derived
category involves passing first to the homotopy category $K(\A)$.
Then $C(\A)$ is a Frobenius category and
$K(\A)$ is the corresponding stable category, hence a quotient of $C(\A)$.
Then the class of quasi-isomorphisms in $K(\A)$ admits a calculus
of left and right fractions. Localising at this class gives rise to the
derived category of $\A$.

In Section~\ref{s:notation} we set-up the context in which we work.
In Section~\ref{s:preabelian}, we recall the definitions of semi-abelian and
integral categories and some results of Rump~\cite{r04,r01} which will be
useful.
In Section~\ref{s:cxproperties}, we prove that $\CX$ is
integral.
In Section~\ref{s:localisation}, we recall the Gabriel-Zisman theory of
localisation and calculi of fractions and also how it can be applied (following
Rump~\cite[Sect.\ 1]{r01}) to the case of the regular morphisms in an integral
category.
In Section~\ref{s:mainresult}, we apply this to $\CXR$ to show that it is
abelian.
By classifying the projective objects in $\CXR$, we deduce the main result.
In Section~\ref{s:cotorsionpairs}, we explain the relationship of the results here
to work of Nakaoka~\cite{nakaoka}.
In Section~\ref{s:diagrams}, we explain the relationship between our main result and the
results in~\cite{bm10}.

\section{Notation}
\label{s:notation}

We first set up the context in which we work and define some notation.
Let $k$ be a field and $\C$ be a skeletally small, triangulated, $\Hom$-finite, Krull-Schmidt $k$-category with suspension functor $\Sigma$. We need the skeletally small assumption
to ensure that the localisations we need exist.
We assume that $\C$ has a Serre duality, i.e.\ an autoequivalence
$\nu \colon \C\rightarrow \C$
such that $\Hom_{\C}(X,Y)\simeq D\Hom_{\C}(Y,\nu X)$ (natural in $X$ and $Y$)
for all objects $X$ and $Y$ in $\C$, where
$D$ denotes the duality $\Hom_k(-,k)$.
Let $T$ be a rigid object in $\C$ and set $\Gamma=\End_{\C}(T)^{\op}$.

For a full subcategory $\X$ of $\C$, let
$$\X^{\perp} = \{ C \in \C \mid \Ext^1(X,C) = 0 \text{ for each } X \in \X \},$$
and define $^{\perp}\X$ dually. 
For an object $X$ in $\C$, let $\add X$ denote its additive closure, and
let $X^{\perp} = (\add X)^{\perp}$.
A rigid object $T$ is called \emph{cluster-tilting} if $\add T = T^{\perp}$.
Let $\X_T = (\Sigma T)^{\perp}$. 

We also recall the triangulated version of Wakamatsu's Lemma;
see e.g.~\cite[Section 2]{iy}.

\begin{lemma}\label{wakamatsu}
Let $\X$ be an extension-closed subcategory of a triangulated category
$\C$.
\begin{itemize}
\item[(a)] Suppose that $X \to C$ is a minimal right $\X$-approximation
of $C$ and $\Sigma^{-1} C \to Y \to X \to C $ a completion to a triangle.
Then $Y$ is in $\X^{\perp}$, and the map
 $\Sigma^{-1} C \to Y$ is a left $\X^{\perp}$-approximation of $\Sigma^{-1} C$.

\item[(b)] Suppose that $C \to X$ is a minimal left $\X$-approximation
of $C$ and  $\Sigma^{-1}Z \to C \to X \to Z \to \Sigma C $ a completion
to a triangle.
Then $Z$ is in $^{\perp}\X$, and the map
 $Z \to \Sigma C$ is a right $^{\perp}\X$-approximation of $\Sigma C$.
\end{itemize}
\end{lemma}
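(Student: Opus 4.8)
The plan is to establish part (a) in full and then obtain part (b) by the dual argument in $\C^{\op}$ (where the suspension is $\Sigma^{-1}$, left approximations become right approximations, and $\X^{\perp}$ becomes ${}^{\perp}\X$); I will not spell this reduction out. So fix $C \in \C$, a minimal right $\X$-approximation $f \colon X \to C$ (hence $X \in \X$), and complete to a triangle $\Sigma^{-1}C \xrightarrow{w} Y \xrightarrow{a} X \xrightarrow{f} C$ with connecting morphism $b \colon C \to \Sigma Y$; recall $bf = 0$. Two things must be proved: (i) $Y \in \X^{\perp}$, i.e.\ $\Hom_{\C}(X',\Sigma Y) = 0$ for all $X' \in \X$; and (ii) assuming (i), that $w$ is a left $\X^{\perp}$-approximation of $\Sigma^{-1}C$. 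Part (ii) is routine: for $Y' \in \X^{\perp}$, apply $\Hom_{\C}(-,Y')$ to the triangle; the term following $\Hom_{\C}(Y,Y') \xrightarrow{w^{*}} \Hom_{\C}(\Sigma^{-1}C,Y')$ is $\Hom_{\C}(\Sigma^{-1}X,Y') \cong \Ext^{1}_{\C}(X,Y')$, which is $0$ since $X \in \X$ and $Y' \in \X^{\perp}$, so $w^{*}$ is surjective --- exactly the required factorisation property. (The approximation property of $f$ also gives that $b_{*}\colon \Hom_{\C}(X',C) \to \Hom_{\C}(X',\Sigma Y)$ is zero for $X' \in \X$, which is used below.)

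For (i), fix $X' \in \X$ and $h \colon X' \to \Sigma Y$; I want $h = 0$. First complete $h$ to a triangle $Y \xrightarrow{p} N \to X' \xrightarrow{h} \Sigma Y$, and then form the homotopy pushout of $a \colon Y \to X$ and $p \colon Y \to N$ (a standard application of the octahedral axiom). This produces an object $Q$ lying in two triangles, $X \xrightarrow{x} Q \to X' \xrightarrow{\delta} \Sigma X$ and $N \to Q \xrightarrow{r} C \to \Sigma N$, where $\delta = \pm(\Sigma a)\circ h$ and $rx = f$. From the first triangle, $Q$ is an extension of $X' \in \X$ by $X \in \X$, so $Q \in \X$ because $\X$ is extension-closed. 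Since $f$ is a right $\X$-approximation, $r$ factors as $r = fe$ for some $e \colon Q \to X$, whence $f(ex) = rx = f$, so $ex$ is an isomorphism by right-minimality of $f$. Therefore $x$ is a split monomorphism, which forces the first triangle to split and its connecting morphism $\delta = \pm(\Sigma a)h$ to vanish. Finally, $(\Sigma a)h = 0$ means $h$ factors through $b$, say $h = b\tilde{h}$; since $X' \in \X$, the map $\tilde{h}$ factors through $f$, and then $h = 0$ because $bf = 0$. Hence $Y \in \X^{\perp}$.

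The main obstacle is step (i), and in particular two points inside it. The first is recognising that right-minimality of $f$ is indispensable --- without it the conclusion fails, as one sees by taking $\X = \C$ and $f = (1,0)\colon C \oplus Z \to C$, for which the cocone $Y \cong Z$ need not lie in $\X^{\perp} = \C^{\perp}$. The second is finding the device that brings minimality to bear, namely the homotopy pushout: its whole purpose is to build an object $Q$ of $\X$ equipped with a map to $C$ compatible with $f$, so that right-minimality can be applied. Assembling the two pushout triangles with the correct connecting morphisms and the identity $rx = f$, and then using the standard fact that a split monomorphism occurring in a triangle has zero connecting morphism, is the genuine content; the remaining steps are formal chases in the long exact sequences attached to the defining triangle.
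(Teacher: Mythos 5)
Your proof is correct: the homotopy pushout of $a\colon Y\to X$ and $p\colon Y\to N$ with the stated triangles, the use of extension-closure to place $Q$ in $\X$, the appeal to right-minimality of $f$ to make $ex$ invertible and hence kill the connecting morphism $\pm(\Sigma a)h$, and the final factorisation $h=b\tilde h=bfg=0$, together with the long-exact-sequence argument for the approximation property, are all valid, and part (b) does follow by the dual argument. The paper gives no proof of this lemma, referring instead to Section 2 of \cite{iy}, and your argument is precisely the standard proof found there, so the two approaches agree.
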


Using this, we obtain:

\begin{lemma}\label{cvf}
Let $T$ be a rigid object in $\C$.
Then the subcategory $\X_T$ of $\C$ is functorially finite.
\end{lemma}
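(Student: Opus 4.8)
The plan is to establish functorial finiteness of $\X_T$ by constructing, for each object $C \in \C$, both a right $\X_T$-approximation and a left $\X_T$-approximation, using the triangulated Wakamatsu Lemma (Lemma~\ref{wakamatsu}) together with the Serre duality hypothesis. Recall $\X_T = (\Sigma T)^\perp = \{C \in \C \mid \Hom_\C(T, \Sigma^{-1} C) = 0\}$, and note first that $\X_T$ is extension-closed: this is immediate from the long exact sequence obtained by applying $\Hom_\C(\Sigma T, -)$ to a triangle, so Lemma~\ref{wakamatsu} applies with $\X = \X_T$.

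For \emph{contravariant} finiteness, I would start from the subcategory $\add T$ (or rather $\add \Sigma^{-1} T$), which is functorially finite since $T$ has only finitely many indecomposable summands and $\C$ is Hom-finite Krull-Schmidt; hence every $C$ has a minimal right $\add(\Sigma^{-1}T)$-approximation $X \to C$. Completing to a triangle $\Sigma^{-1}C \to Y \to X \to C$ and applying Lemma~\ref{wakamatsu}(a) with $\X = \add(\Sigma^{-1}T)$ shows that $Y \in (\add \Sigma^{-1} T)^\perp$; one then checks that $(\add \Sigma^{-1}T)^\perp = (\Sigma T)^\perp = \X_T$ after shifting (here I use that $\Ext^1(\Sigma^{-1}T, Y) \cong \Ext^1(T, \Sigma Y)$ and relabel). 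The approximation property of $\Sigma^{-1}C \to Y$ as a left $\X_T$-approximation, combined with applying $\Sigma$, yields a right $\X_T$-approximation of $C$; the rigidity of $T$ is what makes $\X_T$ extension-closed and keeps the relevant $\Ext$-vanishing consistent. The dual argument, via Lemma~\ref{wakamatsu}(b) and left $\add T$-approximations (which exist for the same finiteness reasons), produces left $\X_T$-approximations, giving covariant finiteness; this is where Serre duality enters, since it guarantees $\C$ has left approximations by $\add T$ (equivalently, via $\nu$, one transports the right-approximation statement).

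The main obstacle I anticipate is bookkeeping with the suspension functor: $\X_T$ is defined as a perp with respect to $\Sigma T$, but Wakamatsu's Lemma most naturally produces objects in $\X^\perp$ for the subcategory $\X$ one approximates by, so one must carefully juggle shifts so that the "$\perp$" coming out of the lemma is exactly $\X_T$ and not some shift of it. Concretely, the subtlety is matching $(\Sigma T)^\perp$ with $(\add \Sigma^{-1} T)^\perp$ under the identification $\Ext^1(\Sigma^{-1}T, -) \cong \Ext^1(T, \Sigma -)$, and then checking that applying $\Sigma$ to a left $\X_T$-approximation of $\Sigma^{-1}C$ genuinely gives a right $\X_T$-approximation of $C$ (this uses that $\Sigma$ is an autoequivalence preserving $\X_T$, since $\X_T = (\Sigma T)^\perp$ need not be $\Sigma$-stable in general — so more likely one works with the triangle directly rather than shifting). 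A cleaner route, which I would adopt if the shifting becomes awkward, is to apply Lemma~\ref{wakamatsu}(a) directly with $\X = \X_T$ itself once we know approximations by $\X_T$ exist on one side, bootstrapping from functorial finiteness of $\add T$; but the honest point is that everything reduces to the elementary fact that $\add T$ is functorially finite plus one application of each part of Wakamatsu's Lemma, so the proof should be short.
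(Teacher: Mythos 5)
Your overall plan (reduce to functorial finiteness of $\add T$ and apply the two halves of Lemma~\ref{wakamatsu}, with Serre duality supplying one side) is indeed the route the paper intends, but the execution has concrete errors. First, the identification $(\add \Sigma^{-1}T)^{\perp}=(\Sigma T)^{\perp}$ is false: with the convention $\Ext^1(X,C)=\Hom_{\C}(X,\Sigma C)$ one has $(\Sigma^{-1}T)^{\perp}=\Sigma^{-1}(T^{\perp})$ while $\X_T=(\Sigma T)^{\perp}=\Sigma(T^{\perp})=\{C \mid \Hom_{\C}(T,C)=0\}$ (not $\{C\mid \Hom_{\C}(T,\Sigma^{-1}C)=0\}$, as you write); the two perpendicular categories differ by $\Sigma^{2}$, so the object $Y$ produced by your application of Lemma~\ref{wakamatsu}(a) need not lie in $\X_T$. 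The correct choice is $\X=\add T$: then $Y\in T^{\perp}$ and $\Sigma Y\in \X_T$. Second, even after this fix, applying $\Sigma$ to the left $T^{\perp}$-approximation $\Sigma^{-1}C\to Y$ gives a \emph{left} $\X_T$-approximation $C\to \Sigma Y$; since $\Sigma$ is a covariant autoequivalence it can never convert a left approximation into a right one. So this half of your argument establishes covariant finiteness, not contravariant finiteness as you label it, and the variance is likewise crossed in your second half: Lemma~\ref{wakamatsu}(b) applied to left $\add T$-approximations produces \emph{right} approximations by ${}^{\perp}T$, and ${}^{\perp}T$ is not $\X_T$ in general.

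This points to the genuine gap: the role you assign to Serre duality is wrong, and the contravariant half is never actually proved. Left $\add T$-approximations exist already because $\C$ is Hom-finite and Krull--Schmidt and $T$ has finitely many indecomposable summands; Serre duality is not needed there. It is needed exactly to handle right $\X_T$-approximations: $\Hom_{\C}(T,C)\cong D\Hom_{\C}(C,\nu T)$ gives $\X_T=\{C\mid \Hom_{\C}(C,\nu T)=0\}={}^{\perp}(\Sigma^{-1}\nu T)$, and then a minimal left $\add \nu T$-approximation $C\to \nu T^{0}$, completed to a triangle $Z\to C\to \nu T^{0}\to \Sigma Z$, yields a right $\X_T$-approximation $Z\to C$: maps from $\X_T$ to $C$ kill $\nu T^{0}$ and so factor through $Z$, and $Z\in\X_T$ because $\Hom_{\C}(\Sigma^{-1}\nu T^{0},\nu T)\cong \Ext^1(T^{0},T)=0$ by rigidity while $\Hom_{\C}(\nu T^{0},\nu T)\to\Hom_{\C}(C,\nu T)$ is surjective by the approximation property (equivalently, apply Lemma~\ref{wakamatsu}(b) with $\X=\add \Sigma^{-1}\nu T$ and shift). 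With the shift bookkeeping corrected and Serre duality inserted at this point rather than where you place it, your sketch becomes precisely the paper's proof, which is stated simply as combining Wakamatsu's Lemma with Serre duality.
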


\begin{proof}
This follows from combining Wakamatsu's Lemma (Lemma~\ref{wakamatsu})
with the existence of Serre duality.
\end{proof}

\section{Preabelian categories} \label{s:preabelian}

Recall that an additive category $\A$ is said to be \emph{preabelian} if every
morphism has a kernel and a cokernel. In this section we shall recall some of the
theory of preabelian categories that we need in order to study $\C/\X_T$.
A morphism is said to be \emph{regular} (or a bimorphism) if it is both an epimorphism and a
monomorphism. 

According to~\cite[Sect. 1]{r01}
a preabelian category is called \emph{left semi-abelian} (respectively,
\emph{right semi-abelian}) if every morphism $f$
has a factorisation of the form $ip$ where $p$ is a cokernel and $i$ is
a monomorphism (respectively, where $p$ is an epimorphism
and $i$ is a kernel); see~\cite[Sect. 1]{r01}, where it is pointed out that
in the left semi-abelian case $p$ is necessarily $\coim(f)=\coker(\ker(f))$ and in the right
semi-abelian case $i$ is necessarily $\im(f)=\ker(\coker(f))$.
A preabelian category is said to be \emph{semi-abelian} if it is both left and
right semi-abelian.

We remark that pullbacks and pushouts always exist in a preabelian category.
For the pullback of maps $c \colon B\to D$ and $d \colon C\to D$, we can take the kernel of
the map $B\coprod C\to D$ whose components are $c$ and $-d$, obtaining a
pullback diagram:

\begin{equation} \label{e:pullback}
\xymatrix{
A \ar[r]^a \ar[d]_b & B \ar[d]^c \\
C \ar[r]_d & D
}
\end{equation}

There is a dual construction for the pushout. 

We recall the following characterisation of semi-abelian categories:

\begin{proposition} \cite[Prop.\ 1]{r01} \label{p:semi-abelianlift}
Let $\A$ be a preabelian category. Then $\A$ is left semi-abelian if and only if,
in any pullback diagram as above, $a$ is an epimorphism whenever $d$ is
a cokernel.
\end{proposition}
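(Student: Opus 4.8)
The plan is to reduce everything to the following reformulation of left semi-abelianness, which is immediate from the remark recalled just before the statement: a preabelian category $\A$ is left semi-abelian if and only if, for every morphism $f\colon X\to Y$, the canonical morphism $\bar f\colon\coker(\ker f)\to Y$ through which $f$ factors (uniquely, since $f\,\ker(f)=0$) is a monomorphism. Indeed, if $f=ip$ with $p$ a cokernel and $i$ a monomorphism then $p$ is forced to equal $\coim(f)=\coker(\ker f)$, so that $i=\bar f$; conversely, whenever $\bar f$ is monic, $f=\bar f\circ\coim(f)$ is such a factorisation. I will also use two standard facts valid in any preabelian category: every cokernel $d$ is the cokernel of its own kernel; and in a pushout square with legs $a$ and $b$ one has $\coker(d_0)\cong\coker(a)$, where $d_0$ denotes the structure map parallel to $a$.

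To prove that a left semi-abelian $\A$ satisfies the pullback condition, suppose~\eqref{e:pullback} is a pullback with $d$ a cokernel; I must show $\coker(a)=0$. The key step is to pass to the pushout $D_0$ of the two legs $a\colon A\to B$ and $b\colon A\to C$, with structure maps $c_0\colon B\to D_0$, $d_0\colon C\to D_0$ and the comparison morphism $\psi\colon D_0\to D$ determined by $\psi c_0=c$ and $\psi d_0=d$. Let $v\colon B\oplus C\to D$ be the morphism with components $c$ and $d$. Realising the pullback $A$ as the kernel of the morphism with components $c$ and $-d$ (as in the construction of~\eqref{e:pullback}), and $D_0$ as the cokernel of the morphism $A\to B\oplus C$ with components $a$ and $-b$, a short diagram chase identifies this last morphism with $\ker(v)$; hence $D_0\cong\coim(v)=\coker(\ker v)$, and under this identification $\psi$ is precisely the canonical morphism $\coim(v)\to D$. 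Left semi-abelianness therefore forces $\psi$ to be a monomorphism. Now invoke the hypothesis on $d$: since $\psi d_0=d$ is the cokernel of $\ker(d)$ and $\psi$ is monic, $d_0\circ\ker(d)=0$, so $d_0$ factors as $d_0=\rho d$; then $d=\psi\rho d$ with $d$ epic forces $\psi\rho=\mathrm{id}_D$, so the monomorphism $\psi$ is split epic and hence an isomorphism. Finally $\coker(a)\cong\coker(d_0)\cong\coker(\psi d_0)=\coker(d)=0$, the last step because a cokernel is an epimorphism; thus $a$ is an epimorphism.

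For the converse, assume that in every pullback square the arrow opposite a cokernel is an epimorphism, and let $f\colon X\to Y$ be arbitrary; I must show $\bar f$ is a monomorphism. Put $p=\coim(f)=\coker(\ker f)$, so $f=\bar f p$, let $m\colon K'\to\coker(\ker f)$ be the kernel of $\bar f$, and form the pullback of the cokernel $p$ along $m$:
\[
\xymatrix{
P \ar[r]^{a'} \ar[d]_{b'} & K' \ar[d]^{m} \\
X \ar[r]_{p} & \coker(\ker f)
}
\]
By hypothesis $a'$ is an epimorphism. Since $\bar f m=\bar f\circ\ker(\bar f)=0$, we get $fb'=\bar f p b'=\bar f m a'=0$, so $b'$ factors through $\ker(f)$; as $p=\coker(\ker f)$, this forces $pb'=0$, whence $m a'=p b'=0$, and $a'$ epic gives $m=0$. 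Thus $\bar f$ is a monomorphism and $\A$ is left semi-abelian.

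The step I expect to be the real obstacle is the forward implication: the hypothesis offers no direct control over $\coker(a)$, and the crucial observation is that the pushout of the two legs of a pullback square is canonically the coimage of the morphism $v$ with components $c$ and $d$, so that the single available instance of left semi-abelianness — applied to $v$ — yields the comparison map $\psi$ as a monomorphism; the cokernel hypothesis on $d$ then does exactly the work needed to promote $\psi$ to an isomorphism. The remaining verifications — that the morphism with components $a,-b$ equals $\ker(v)$, that $\coker(d_0)\cong\coker(a)$ for a pushout, and the sign bookkeeping in $B\oplus C$ — are routine diagram chases I would not spell out.
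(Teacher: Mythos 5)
The paper does not actually prove this proposition --- it is recalled from Rump \cite[Prop.\ 1]{r01} without proof --- so there is no in-paper argument to measure yours against; judged on its own, your proof is correct. Your reduction of left semi-abelianness to the assertion that the canonical map $\bar f\colon\coker(\ker f)\to Y$ is always a monomorphism is exactly the remark the paper recalls just before the statement. In the forward direction, the identification of the pushout $D_0$ of the legs $a,b$ with $\coim(v)$, where $v\colon B\oplus C\to D$ has components $c$ and $d$ (using the kernel construction of the pullback given in the paper, with the sign adjustment you mention), is sound, so left semi-abelianness does give that the comparison map $\psi\colon D_0\to D$ is monic; the promotion of $\psi$ to an isomorphism via $d=\coker(\ker d)$ and the epimorphy of $d$, and the auxiliary facts you invoke (a cokernel is the cokernel of its own kernel, $\coker(d_0)\cong\coker(a)$ across a pushout, and $\coker(a)=0$ if and only if $a$ is an epimorphism) all hold in a preabelian category, so $\coker(a)\cong\coker(d)=0$ as claimed. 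The converse --- pulling the cokernel $p=\coim(f)$ back along $\ker(\bar f)$, using the hypothesis to make the top arrow epic, and deducing $\ker(\bar f)=0$ and hence $\bar f$ monic --- is also correct and is the natural argument. In short, your proof is a complete, self-contained substitute for the citation, and is essentially the standard route to Rump's characterisation.
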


A dual characterisation in terms of pushout diagrams holds for right
semi-abelian categories.

A preabelian category is said to be \emph{left integral} provided that,
in any pullback diagram as above,
$a$ is an epimorphism whenever $d$ is an epimorphism.
A dual definition involving pushouts is used to define right integral
categories. A preabelian category which is both left and right integral is said to be \emph{integral}.

The following then follows from Proposition~\ref{p:semi-abelianlift}.

\begin{proposition} \cite[Cor.\ 1]{r01} \label{p:integralimpliessemi-abelian}
Any left integral (respectively, integral) category is left semi-abelian
(respectively, semi-abelian).
\end{proposition}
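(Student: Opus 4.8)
The plan is to derive this as a short formal consequence of Proposition~\ref{p:semi-abelianlift} (and its dual), once we record the elementary fact that every cokernel is an epimorphism. So the first step is to note that if $p=\coker(g)$ and $up=vp$, then setting $q:=up=vp$ we have $qg=upg=0$, so by the universal property of the cokernel there is a \emph{unique} morphism factoring $q$ through $p$; since both $u$ and $v$ do so, $u=v$, and hence $p$ is an epimorphism. (Dually, every kernel is a monomorphism, which is not needed for the ``left'' statement but will be for the unparenthesised one.)

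With this in hand the main step is immediate. Assume $\A$ is left integral and let \eqref{e:pullback} be any pullback diagram in which $d$ is a cokernel. Then $d$ is an epimorphism by the fact just recorded, so left integrality forces $a$ to be an epimorphism. Since this holds for every pullback square in which $d$ is a cokernel, Proposition~\ref{p:semi-abelianlift} applies and shows that $\A$ is left semi-abelian.

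For the parenthetical assertion, suppose $\A$ is integral. Then it is in particular left integral, hence left semi-abelian by the previous paragraph. Applying the same reasoning to the dual statements — the dual of Proposition~\ref{p:semi-abelianlift} characterising right semi-abelian categories via pushout squares, together with right integrality and the fact that kernels are monomorphisms — shows that $\A$ is also right semi-abelian, so it is semi-abelian.

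I do not expect any genuine obstacle here: the only points needing (minimal) care are the verification that cokernels are epimorphisms and keeping track of which notion dualises to which. Everything else is a direct appeal to the cited characterisation, which is why this is stated as a corollary rather than a theorem.
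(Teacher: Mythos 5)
Your argument is correct and is exactly the paper's route: the paper derives the proposition directly from Proposition~\ref{p:semi-abelianlift} (and its dual), the only ingredient being the standard fact that cokernels are epimorphisms (and kernels monomorphisms), which you spell out. Nothing further is needed.
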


We recall the following two results from~\cite{r01}.

\begin{lemma} \cite[Lemma 1]{r01} \label{l:pullbackofmono}
Let $\A$ be a preabelian category.
In a pullback diagram~\eqref{e:pullback}, whenever $d$ is a monomorphism,
$a$ is a monomorphism.
\end{lemma}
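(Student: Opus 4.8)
The plan is to exploit the explicit description of the pullback recalled just before the statement: the object $A$, together with $a$ and $b$, is obtained as the kernel of the morphism $(c\ {-d})\colon B\coprod C\to D$. Thus there is a monomorphism $k\colon A\to B\coprod C$ (kernels are always monomorphisms) with $a=p_Bk$ and $b=p_Ck$, where $p_B,p_C$ are the projections of the biproduct, and the square commutes, i.e.\ $ca-db=0$.

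Since $\A$ is additive, to prove that $a$ is a monomorphism it suffices to show that $ae=0$ implies $e=0$ for every morphism $e\colon Z\to A$. First I would use commutativity of the square: from $ca=db$ we get $db\,e=ca\,e=c\,(ae)=0$, and since $d$ is a monomorphism this forces $be=0$. Now the two components of $ke\colon Z\to B\coprod C$ are $p_Bke=ae=0$ and $p_Cke=be=0$, so $ke=0$; as $k$ is a monomorphism, $e=0$, and hence $a$ is a monomorphism.

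Alternatively one can argue directly from the universal property without choosing the biproduct model: given $g,h\colon Z\to A$ with $ag=ah$, commutativity gives $d(bg)=c(ag)=c(ah)=d(bh)$, so $bg=bh$ because $d$ is a monomorphism; then the pair $(ag,bg)=(ah,bh)$ factors through the pullback $A$ uniquely, whence $g=h$. Either route is short, and I do not expect a genuine obstacle; the only points requiring care are that the construction of the pullback as a kernel of $(c\ {-d})$ is valid in a preabelian category (noted in the text) and that kernels are automatically monomorphisms. The dual assertion, that a pushout of an epimorphism is an epimorphism, is then immediate by passing to $\C^{\op}$.
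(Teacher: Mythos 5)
Your proof is correct; note that the paper offers no argument of its own for this lemma, citing only Rump's Lemma 1, so there is no in-paper proof to match against. Of your two arguments, the first works with the explicit model of the pullback recalled just before the statement, namely $A=\ker\bigl(B\coprod C\to D\bigr)$ with components $c$ and $-d$; it is correct, and the only point worth making explicit is that the lemma is stated for an arbitrary pullback square, so one should add the routine remark that any pullback of $c$ and $d$ is canonically isomorphic to this kernel model, and that $a$ being a monomorphism is invariant under composing with that isomorphism. Your second argument avoids even this: it is the standard universal-property proof that a pullback of a monomorphism is a monomorphism, valid in any category in which the pullback exists, and it makes clear that the preabelian hypothesis is used only to guarantee that pullbacks exist (via the kernel construction), not in the monomorphism argument itself. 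This is a useful contrast with the epimorphism situation, where the analogous statement genuinely fails in preabelian categories and is exactly the extra condition (left integrality) studied in the paper. Your closing remark on the dual statement (pushouts of epimorphisms) is likewise correct.
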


We also recall the following (which also includes a dual statement involving
pushouts which we will not need here).

\begin{proposition} \cite[Prop.\ 6]{r01} \label{p:integralcharacterisation}
Let $\A$ be a semi-abelian category. Then the following are equivalent.
\begin{enumerate}
\item[(a)] The category $\A$ is integral.
\item[(b)] For any pullback diagram~\eqref{e:pullback}, $a$ is regular
whenever $d$ is regular.
\end{enumerate}
\end{proposition}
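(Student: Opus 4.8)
The plan is to prove Proposition~\ref{p:integralcharacterisation} by showing $(a) \Rightarrow (b)$ and $(b) \Rightarrow (a)$, where the latter direction is essentially immediate. For $(b) \Rightarrow (a)$, I would argue that $(b)$ forces both left and right integrality directly: given a pullback square~\eqref{e:pullback} with $d$ an epimorphism (but not necessarily a monomorphism), I would factor $d$ through its image or, more efficiently, reduce to the regular case by composing. Actually the cleaner route is to use that $(b)$ together with semi-abelianness already gives us the monomorphism statement for free via Lemma~\ref{l:pullbackofmono}, and to get epimorphisms we can enlarge the square: if $d$ is an epimorphism, pull back $c$ along $d$; we need $a$ epi. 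Since $\A$ is semi-abelian, hence left semi-abelian, Proposition~\ref{p:semi-abelianlift} handles the case where $d$ is a cokernel, and one reduces the general epimorphism case to the regular case by a standard diagram chase using that any morphism factors as $\coim$ followed by a monomorphism. The point is that $(b)$ supplies the regular-to-regular lifting which, combined with Lemma~\ref{l:pullbackofmono} (mono lifts) and semi-abelianness (which controls cokernels), covers all epimorphisms.

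The substantive direction is $(a) \Rightarrow (b)$: assuming $\A$ integral, I want to show that in a pullback square, $a$ is regular whenever $d$ is regular. That $a$ is a monomorphism follows at once from Lemma~\ref{l:pullbackofmono} since $d$, being regular, is in particular a monomorphism. That $a$ is an epimorphism follows directly from left integrality, since $d$, being regular, is in particular an epimorphism. So in fact $(a) \Rightarrow (b)$ is also short once one unwinds the definitions — the content is entirely bookkeeping about which half of ``regular'' feeds which lifting property. I would present both implications compactly, being careful to invoke \emph{left} integrality (pullbacks, epimorphisms) rather than right integrality, since the diagram~\eqref{e:pullback} is a pullback.

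The one place requiring genuine care — and what I expect to be the main obstacle — is the converse reduction in $(b) \Rightarrow (a)$: showing that lifting regular morphisms along pullbacks, plus semi-abelianness, actually recovers lifting of \emph{arbitrary} epimorphisms. The subtlety is that in a merely semi-abelian (not abelian) category the factorisation of an epimorphism into a cokernel followed by a regular morphism, or similar, is not automatic; one must check exactly what Rump's semi-abelian axioms provide. Concretely, given an epimorphism $d$, I would factor it as $d = d' p$ where $p = \coim(d)$ is a cokernel and $d'$ is a monomorphism; since $\A$ is left semi-abelian, $p$ is a cokernel, and since $d$ is epi and $p$ is epi, $d'$ is epi, hence $d'$ is regular. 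Forming the pullback of $c$ along $d$ as an iterated pullback first along $d'$ then along $p$, the first stage gives an epimorphism by hypothesis $(b)$ (as $d'$ is regular) and the second stage gives an epimorphism by Proposition~\ref{p:semi-abelianlift} (as $p$ is a cokernel); composing, $a$ is an epimorphism. The dual argument, using right semi-abelianness and pushouts, gives right integrality. I would double-check that the iterated-pullback decomposition is legitimate and that no hidden use of right integrality sneaks in. Everything else is formal.
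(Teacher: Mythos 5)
Note first that the paper does not prove this proposition at all --- it is quoted directly from Rump \cite[Prop.\ 6]{r01} --- so your argument has to stand on its own. The easy direction $(a)\Rightarrow(b)$ is fine as you give it: $d$ regular yields $a$ monic by Lemma~\ref{l:pullbackofmono} and $a$ epic by left integrality. Your treatment of \emph{left} integrality in $(b)\Rightarrow(a)$ is also correct and is a genuine argument: factor the epimorphism $d$ as $d=d'p$ with $p=\coim(d)$ a cokernel and $d'$ a monomorphism (left semi-abelianness), observe $d'$ is epic because $d$ is, hence regular; compute the pullback of $c$ along $d$ in two stages, the stage over $d'$ giving a regular morphism by hypothesis (b) and the stage over $p$ giving an epimorphism by Proposition~\ref{p:semi-abelianlift}; paste and use uniqueness of pullbacks. (Your opening paragraph mislabels which implication is ``substantive'' and partly duplicates this, but that is cosmetic.)

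The genuine gap is the final sentence, ``the dual argument, using right semi-abelianness and pushouts, gives right integrality.'' Condition (b) is not self-dual: it is a hypothesis about \emph{pullbacks} only. Dualising your argument proves a different statement, namely that if pushouts of regular morphisms are regular then $\A$ is right integral --- but that pushout hypothesis is exactly what you do not have, and in a merely semi-abelian category a pullback square need not be a pushout square, so you cannot transport (b) across the duality. As written you have shown only $(b)\Rightarrow$ left integral, which is not yet (a). To close the gap you need precisely the nontrivial fact that a left integral semi-abelian category is automatically right integral (this is the statement the paper quotes separately as \cite[Cor.\ p173]{r01}), or equivalently an argument that the pullback condition in (b) implies its pushout counterpart; this is the real content hidden in Rump's Proposition 6, and nothing in your sketch supplies it. The caution you flag (``no hidden use of right integrality sneaks in'') is aimed at the wrong step: the iterated-pullback decomposition is legitimate, it is the dualisation that fails.
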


Finally, we note the following, which is easy to show using the definitions.

\begin{lemma}\label{weakandepi}
Let $h$ be a map in an additive category which is a weak cokernel of a map
$g$ and an epimorphism. Then $h$ is a cokernel of $g$.
\end{lemma}

\section{Properties of $\CX$}
\label{s:cxproperties}

In this section, we consider the factor category $\CX$. The objects in $\CX$ are
the same as those in $\C$. For objects $X,Y$ in $\C$,
$\Hom_{\CX}(X,Y)$ is given by $\Hom_{\C}(X,Y)$ modulo morphisms
factoring through $\X_T$. We denote the image of a morphism $f$ in $\C$ by $\underline{f}$.
Note that, since $\C$ is $k$-additive, so is $\CX$.

We will show the following result, which can be regarded as a
generalisation of \cite[Theorem 3.3]{kz}. Our proof is inspired by the
proof in \cite{kz}.

\begin{theorem}\label{preab}
The factor category $\C/ \X_T$ is preabelian.
\end{theorem}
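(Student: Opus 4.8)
The plan is to show directly that every morphism in $\CX$ has a kernel and a cokernel, working with representatives in $\C$ and using the triangulated structure together with the approximation properties of $\X_T$ coming from Lemma~\ref{cvf}. By the self-dual nature of the statement it suffices to construct cokernels; kernels then follow by the dual argument (noting that $\CX$ is $k$-additive, so it has finite biproducts, and that $\X_T$-approximations exist on both sides since $\X_T$ is functorially finite).

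First I would fix a morphism $\underline{f}\colon X\to Y$ in $\CX$ and lift it to $f\colon X\to Y$ in $\C$. Complete $f$ to a triangle $X\xrightarrow{f} Y\xrightarrow{g} Z\to \Sigma X$ in $\C$. The natural candidate for a weak cokernel of $\underline{f}$ in $\CX$ is $\underline{g}$, and indeed applying $\Hom_{\C}(-,W)$ to the triangle and reducing modulo maps factoring through $\X_T$ shows that $\underline{g}$ is a weak cokernel of $\underline{f}$ in $\CX$: any $\underline{h}\colon Y\to W$ with $\underline{h}\,\underline{f}=0$ lifts to $h$ with $hf$ factoring through some object of $\X_T$, and a short diagram chase (using that $\X_T^{\perp}$-approximations interact well with triangles, cf. Wakamatsu's Lemma~\ref{wakamatsu}) lets one modify $h$ within its class so that it genuinely factors through $g$. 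The point is that $\underline{g}$ need not be an \emph{epimorphism}, so it is only a weak cokernel, not a cokernel; by Lemma~\ref{weakandepi} the obstruction is precisely the failure of $\underline{g}$ to be epi.

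Next, to pass from the weak cokernel to an honest cokernel, I would follow the Koenig--Zhu strategy: take a minimal right $\X_T$-approximation $X_Z\to Z$ of $Z$, form the triangle $Y'\to X_Z\to Z\to \Sigma Y'$ completing it, and splice. Concretely, one builds a commutative diagram of triangles relating $X\xrightarrow{f}Y\xrightarrow{g}Z$ to a new triangle whose third map $Y\to \bar Z$ has the feature that $\underline{\bar Z}$ ``sees'' only the part of $Z$ not coming from $\X_T$; equivalently, one replaces $Z$ by the cone of $X_Z\to Z$ shifted appropriately. The resulting map $\underline{g'}\colon Y\to \bar Z$ is still a weak cokernel of $\underline{f}$ (weak cokernels are unique up to the relevant equivalence, and the modification only changes things by maps through $\X_T$), and now the minimality of the approximation forces $\underline{g'}$ to be an epimorphism in $\CX$: if $\underline{u}\,\underline{g'}=0$ then $u g'$ factors through $\X_T$, and minimality of the approximation used to form $\bar Z$ lets one conclude $\underline{u}=0$. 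By Lemma~\ref{weakandepi}, $\underline{g'}$ is then a cokernel of $\underline{f}$.

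The main obstacle is precisely this last epimorphism verification: controlling how maps out of the cone factor through $\X_T$, and showing that after replacing $Z$ by the cone over its minimal $\X_T$-approximation no further such factorisations survive. This is where minimality of the approximation (equivalently, the absence of summands in $\X_T$ that split off the relevant triangle) does the real work, and it is the step that genuinely uses the hypotheses on $\C$ (Krull--Schmidt, Hom-finiteness, Serre duality, via Lemma~\ref{cvf}) rather than just formal nonsense. Everything else — that $\underline g$ is a weak cokernel, that the dual construction gives kernels, that $\CX$ is additive — is routine diagram-chasing in a triangulated category modulo an ideal.
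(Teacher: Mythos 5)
Your first step is where the argument breaks down: the cone map $\underline{g}\colon Y\to Z$ of $f$ itself is in general \emph{not} a weak cokernel of $\underline{f}$ in $\CX$. Exactness of $\Hom_{\C}(Z,W)\to\Hom_{\C}(Y,W)\to\Hom_{\C}(X,W)$ does not survive passage to the quotient by the ideal of maps factoring through $\X_T$: in the diagram chase you allude to, one needs the composite $Z\to\Sigma X\to\Sigma U$ to vanish, where $U\in\X_T$ is the object through which $hf$ factors, and there is no reason for this. Here is a concrete counterexample, already in the cluster-tilting case. Let $\C$ be the cluster category of $A_2$ (quiver $1\leftarrow 2$) and $T=P_1\oplus P_2$, so $\X_T=\add \Sigma T$. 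Since $\Sigma^2P_1\cong \nu P_1\cong P_2$ in $\C$, there is a nonzero map $f\colon \Sigma P_2\to P_1$ (corresponding to the surjection $P_2\to S_2$), and its triangle is $\Sigma P_2\overset{f}{\to}P_1\overset{g}{\to}P_2\to\Sigma^2P_2$ with $g\neq 0$. As $\Sigma P_2\in\X_T$ we have $\underline{f}=0$, so any weak cokernel of $\underline{f}$ must be a split monomorphism in $\CX$ (the identity of $P_1$, which is nonzero in $\CX$, must factor through it); but $\Hom_{\C}(P_2,P_1)=0$, so $\underline{g}$ is not. Your proposed repair, replacing $Z$ by the cone of a minimal right $\X_T$-approximation of $Z$, does not help: in this example $\Hom_{\C}(\Sigma P_1\oplus\Sigma P_2,P_2)\cong\Ext^1_{\C}(P_1\oplus P_2,P_1)=0$ by rigidity, so the approximation is zero and your construction is unchanged, whereas the cokernel of $\underline{f}=0$ is the identity of $P_1$. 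Moreover the justification offered (``weak cokernels are unique up to the relevant equivalence, and the modification only changes things by maps through $\X_T$'') is not an argument: the modified map differs from $g$ by composition with a new map, not by something factoring through $\X_T$, and weak cokernels enjoy no useful uniqueness.

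The construction that works (and is the one in the paper) modifies the \emph{source}, not the target: take a minimal right $\add T$-approximation $a\colon T_0\to X$ and let $c\colon Y\to M$ be the cone map of the composite $fa$. Then $\underline{c}\,\underline{f}=0$ because the cone of $a$ lies in $\X_T$ by Wakamatsu's Lemma (Lemma~\ref{wakamatsu}); $\underline{c}$ is a weak cokernel of $\underline{f}$ by the octahedral axiom together with Lemma~\ref{chasing}, the crucial vanishing now being $\Hom_{\C}(T_0,U)=0$ for all $U\in\X_T$, which holds by the very definition of $\X_T$; and $\underline{c}$ is automatically an epimorphism, because the third term of its triangle is $\Sigma T_0$, which lies in $\X_T$ precisely because $T$ is rigid, so Lemma~\ref{weakandepi} applies at once. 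In other words, the step you describe as routine (the weak cokernel) is where the $\add T$-approximation of the domain and the hypotheses on $\C$ really enter, while the step you flag as the main obstacle (epimorphy) is immediate from rigidity once the construction is set up correctly; as it stands, your proof has no valid construction of the cokernel.
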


In order to prove Theorem~\ref{preab}, we will need the following lemmas.

\begin{lemma}\label{chasing}
Consider a commutative diagram
$$
\xymatrix{
A \ar[r]^{\alpha} \ar[d]^{\delta_1} & B \ar[r]^{\beta} \ar[d]^{\delta_2}  &  C
\ar[r]^{\gamma} \ar[d]^{\delta_3}  & \Sigma A \ar[d]^{\Sigma \delta_1} \\
A' \ar[r]^{\alpha'} & B' \ar[r]^{\beta'} & C' \ar[r]^{\gamma'}  & \Sigma A' 
}
$$
in a triangulated category, where the rows are triangles. 
\begin{itemize}
\item[(a)] If the composition $\delta_2 \alpha$ vanishes, then
there are maps $\epsilon_1 \colon C \to B'$ and $\epsilon_2 \colon
\Sigma A \to C'$, such that $\delta_3 = \beta' \epsilon_1 + \epsilon_2
\gamma$.
\item[(b)] If the composition $\gamma' \delta_3 $ vanishes, then there
are maps $\phi_1 \colon B \to A'$ and $\phi_2 \colon C \to B'$ such
that $\delta_2 = \phi_2 \beta + \alpha' \phi_i$.    
\end{itemize}
\end{lemma}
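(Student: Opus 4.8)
The plan is to prove both parts by an elementary diagram chase, the only structural input being that each row is a triangle. The single tool needed is that a triangle $A \xrightarrow{\alpha} B \xrightarrow{\beta} C \xrightarrow{\gamma} \Sigma A$ induces, for every object $X$, a long exact sequence of abelian groups upon applying $\Hom_{\C}(-,X)$ or $\Hom_{\C}(X,-)$; equivalently, $\beta$ is a weak cokernel of $\alpha$ and a weak kernel of $\gamma$, and $\gamma$ is a weak cokernel of $\beta$ (and dually for the bottom row).

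For part (a), I would argue as follows. Since $\delta_2\alpha = 0$ and $\beta$ is a weak cokernel of $\alpha$ (exactness of $\Hom_{\C}(C,B') \to \Hom_{\C}(B,B') \to \Hom_{\C}(A,B')$), there is $\epsilon_1 \colon C \to B'$ with $\delta_2 = \epsilon_1\beta$. Put $\theta = \delta_3 - \beta'\epsilon_1 \colon C \to C'$; using commutativity of the middle square, $\delta_3\beta = \beta'\delta_2$, together with $\delta_2 = \epsilon_1\beta$, one finds $\theta\beta = \beta'\delta_2 - \beta'\epsilon_1\beta = 0$. Now exactness of $\Hom_{\C}(\Sigma A,C') \to \Hom_{\C}(C,C') \to \Hom_{\C}(B,C')$, i.e.\ the weak cokernel property of $\gamma$, gives $\epsilon_2 \colon \Sigma A \to C'$ with $\theta = \epsilon_2\gamma$, that is, $\delta_3 = \beta'\epsilon_1 + \epsilon_2\gamma$.

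Part (b) is the formal dual, performed on the bottom triangle. From $\gamma'\delta_3 = 0$ and the weak kernel property of $\beta'$ (exactness of $\Hom_{\C}(C,B') \to \Hom_{\C}(C,C') \to \Hom_{\C}(C,\Sigma A')$) we get $\phi_2 \colon C \to B'$ with $\delta_3 = \beta'\phi_2$. Using commutativity of the middle square once more, $\beta'(\delta_2 - \phi_2\beta) = \delta_3\beta - \delta_3\beta = 0$, and the weak kernel property of $\alpha'$ (exactness of $\Hom_{\C}(B,A') \to \Hom_{\C}(B,B') \to \Hom_{\C}(B,C')$) yields $\phi_1 \colon B \to A'$ with $\delta_2 - \phi_2\beta = \alpha'\phi_1$, i.e.\ $\delta_2 = \phi_2\beta + \alpha'\phi_1$. (The subscript ``$i$'' appearing in the statement of (b) should read ``$1$''.)

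I do not expect any genuine obstacle: this is a routine consequence of the long exact Hom-sequences attached to triangles. The only thing requiring care is the bookkeeping of which exact sequence and which commutativity relation is used at each step; in particular one should note that in both parts only the commutativity of the middle square is needed, the two outer squares playing no role.
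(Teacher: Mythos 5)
Your proof is correct: the paper states Lemma~\ref{chasing} without proof, and your diagram chase---using only that $\Hom_{\C}(-,X)$ and $\Hom_{\C}(X,-)$ turn the two triangles into long exact sequences, together with commutativity of the middle square---is exactly the standard argument the authors intend. You also correctly identify that ``$\phi_i$'' in part (b) is a typo for ``$\phi_1$''.
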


For a map $f \colon X \to Y$ in $\C$, consider the
triangle $\Sigma^{-1}  Z\overset{h}{\to} X \overset{f}{\to} Y \overset{g}{\to} Z$.

\begin{lemma}\label{diagonals}
\begin{itemize}
\item[(a)] The map $\underline{f} \colon X \to Y$ is a monomorphism  if and only if $\underline{h} =0$.
\item[(b)] The map $\underline{f} \colon X \to Y$ is an epimorphism  if and only if $\underline{g} =0$.
\item[(c)] The map $\underline{f} \colon X \to Y$ is regular if and only if
$\underline{g} =0 = \underline{h}$.
\end{itemize}
\end{lemma}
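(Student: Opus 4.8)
The plan is to read off all three parts from the homological functor $H=\Hom_\C(T,-)\colon\C\to\module\Gamma$. The one nontrivial input I will need is: for a morphism $f$ in $\C$, one has $\underline f=0$ in $\CX$ if and only if $H(f)=0$. The ``only if'' direction is immediate, since $\X_T=\{X\in\C\mid\Hom_\C(T,X)=0\}$, so any map factoring through $\X_T$ is annihilated by $H$. For the converse, suppose $H(f)=0$ and choose a right $\add T$-approximation $w\colon T'\to X$ (which exists since $\C$ is $\Hom$-finite; compare Lemma~\ref{cvf}), so that $H(w)$ is surjective. Then $H(fw)=H(f)H(w)=0$; since a morphism with domain in $\add T$ that is killed by $H$ must vanish (test against identity morphisms), $fw=0$ in $\C$, so $f$ factors through the cone of $w$, say $f=\bar f c$ with $T'\xrightarrow{w}X\xrightarrow{c}C'\to\Sigma T'$ a triangle. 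Applying $H$ to this triangle, using that $H(w)$ is surjective and that $\Ext^1(T,T')=0$ by rigidity of $T$, one gets $H(C')=0$, i.e.\ $C'\in\X_T$; hence $\underline f=0$. (This is close to material in \cite{bm10}.) Applying $H$ to the triangle $\Sigma^{-1}Z\xrightarrow{h}X\xrightarrow{f}Y\xrightarrow{g}Z$ then gives the exact sequence
\[
\Hom_\C(T,\Sigma^{-1}Z)\xrightarrow{H(h)}\Hom_\C(T,X)\xrightarrow{H(f)}\Hom_\C(T,Y)\xrightarrow{H(g)}\Hom_\C(T,Z).
\]

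Granting this, part (a) runs as follows. If $\underline f$ is a monomorphism then, since $fh=0$ in $\C$, we have $\underline f\,\underline h=\underline{fh}=0$, forcing $\underline h=0$. Conversely, if $\underline h=0$ then $H(h)=0$, so by exactness $H(f)$ is injective; given any $\underline u\colon W\to X$ with $\underline f\,\underline u=0$ we get $H(f)H(u)=H(fu)=0$, hence $H(u)=0$ and $\underline u=0$, so $\underline f$ is a monomorphism. Part (b) is structurally dual: if $\underline f$ is an epimorphism then $\underline g\,\underline f=\underline{gf}=0$ gives $\underline g=0$; conversely, if $\underline g=0$ then $H(g)=0$, so exactness at $\Hom_\C(T,Y)$ gives $\im H(f)=\ker H(g)=\Hom_\C(T,Y)$, i.e.\ $H(f)$ is surjective, and then any $\underline v$ with $\underline v\,\underline f=0$ satisfies $H(v)H(f)=0$, hence $H(v)=0$ and $\underline v=0$. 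Finally (c) is immediate from (a) and (b), since a regular morphism is by definition one that is both a monomorphism and an epimorphism.

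The real obstacle is the auxiliary equivalence $\underline f=0\Leftrightarrow H(f)=0$; once it is in hand, everything else is a diagram chase in the four-term exact sequence. The point to watch is that this equivalence only asserts that the induced functor $\CX\to\module\Gamma$ is \emph{faithful} (it detects zero morphisms), which is all that is used, whereas it is \emph{not} full in general, so one cannot simply transfer monomorphism/epimorphism between $\CX$ and $\module\Gamma$; accordingly, each of the four implications above is arranged so that $H$ is applied only to a composite involving $f$. If one prefers a self-contained treatment, the auxiliary fact should be proved as sketched, using Lemma~\ref{cvf} for the existence of $\add T$-approximations and rigidity of $T$ to place the relevant cone in $\X_T$; that is the only step requiring care.
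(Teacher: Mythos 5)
Your proof is correct, but it takes a genuinely different route from the paper. The paper works entirely inside $\C$: for the hard direction of (b) it first treats the case where the cone $Z$ lies in $\X_T$, using a minimal right $\X_T$-approximation of the test map's target (Lemma~\ref{cvf}), Wakamatsu's Lemma (Lemma~\ref{wakamatsu}) to place the relevant cocone in $\X_T^{\perp}$, and the diagram-chasing Lemma~\ref{chasing} to factor any test map through $U\amalg Z\in\X_T$; the general case is then reduced to this one, (a) is dual and (c) formal. You instead push everything through $H=\Hom_{\C}(T,-)$, with the single nontrivial input that the induced functor $\CX\to\module\Gamma$ is faithful ($\underline f=0\Leftrightarrow H(f)=0$); your sketch of that fact via an $\add T$-approximation and rigidity (so the cone of the approximation lies in $\X_T$) is sound, and it is precisely \cite[Lemma 2.3]{bm10}, which this paper itself invokes later. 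Given faithfulness, your four implications are correct diagram chases in the long exact sequence, and you rightly note (and correctly handle) the pitfall that the functor is not full, so $H$ must only be applied to composites involving $f$ rather than to arbitrary test maps. What each approach buys: yours is shorter, treats (a) and (b) symmetrically without dualising, and makes transparent that the lemma is really a statement about $\Hom_{\C}(T,-)$ detecting monos and epis; the paper's argument is self-contained within the quotient category, independent of the faithfulness result from \cite{bm10}, and exercises the approximation/Wakamatsu machinery that it reuses in the subsequent lemmas (e.g.\ the construction of kernels, cokernels and enough projectives). One cosmetic point: the existence of the right $\add T$-approximation you need follows from Hom-finiteness and Krull-Schmidt directly (take a basis of $\Hom_{\C}(T,X)$), rather than from Lemma~\ref{cvf}, which concerns $\X_T$-approximations.
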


\begin{proof}
(c) follows by definition from (a) and (b). We prove only (b), the
proof of (a) being dual.

We consider first the case when $Z$ is in $\X_T$.
We then need to show that $\underline{f}$ is an epimorphism in $\C/ \X_T$.

Let $p \colon Y \to M$ be a map such that $\underline{p} \underline{f}
= 0$. Then there is an object $U'$ in $\X_T$, and a commuting square:
$$
\xymatrix{
X \ar[r]^{f} \ar[d] & Y \ar[d]^{p} \\
U' \ar[r] & M
}
$$

By Lemma~\ref{cvf}, a minimal right $\X_T$-approximation
$U \to M$ exists. The
map $U' \to M$ factors through 
$U \to M$, hence there is also a commutative square

$$
\xymatrix{
X \ar[r]^{f} \ar[d]_{q} & Y \ar[d]^{p} \\
U \ar[r]^{f'} & M
}
$$
which we extend to a commutative diagram

$$
\xymatrix{
\Sigma^{-1} Z \ar[r] \ar[d] &  X \ar[r]^{f} \ar[d]^{q} & Y \ar[d]^{p} \ar[r]
& Z \ar[d] \\
\Sigma^{-1} N \ar[r] & U \ar[r]^{f'} & M \ar[r] & N  
}
$$
where the rows are triangles. 
By Wakamatsu's Lemma (Lemma~\ref{wakamatsu}), we have that $\Sigma^{-1}N$ is in 
${\X_T}^{\perp}$. Therefore the map $Z \to N$ is zero, using that $Z$ is
by assumption in $\X_T$. By commutativity, 
the composition $\Sigma^{-1} Z \to X \to U$ vanishes.
Hence, we have by Lemma \ref{chasing},
that there are maps $v_1 \colon Y \to U$, and $v_2 \colon Z \to M$,
such that $p =   f' v_1 + v_2 g$. Hence $p$ factors through $U \amalg
Z$, which is in $\X_T$, so we have $\underline{p}= 0$.

Now consider the general case, so assume $g$ factors through an object
$V$ in $\X_T$ and consider the induced commutative diagram
$$
\xymatrix{
\Sigma^{-1} V \ar[r] \ar[d] & N \ar[r]^{f'} \ar[d]^{r}  & Y \ar@{=}[d]  \ar[r] &
V \ar[d] \\
\Sigma^{-1} Z \ar[r]^{h} & X \ar[r]^{f} & Y \ar[r]^{g} & Z  \\
}
$$
where the rows are triangles.
Now $f' = f r$ and hence $\underline{f'} =  \underline{f}
\underline{r}$. Note that since $V$ is in $\X_T$, we have that 
$\underline{f'}$ is an epimorphism. It follows that
$\underline{f} \underline{r}$, and hence $\underline{f}$, is an
epimorphism.

Conversely, if $\underline{f}$ is an epimorphism then, since $gf=0$ we have
$\underline{g}\underline{f}=0$, so $\underline{g}=0$.
\end{proof}

\begin{lemma} \label{kernelcokernel}
For any map $f \colon X \to Y$ in $\C$, the map 
$\underline{f} \colon X \to Y$ has a kernel and a cokernel.
\end{lemma}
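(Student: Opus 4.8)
The plan is to construct the kernel and cokernel of $\underline{f}$ explicitly from the triangle $\Sigma^{-1} Z \overset{h}{\to} X \overset{f}{\to} Y \overset{g}{\to} Z$ associated to $f$, following the Koenig--Zhu strategy. By duality it suffices to produce a cokernel; I would do that and then invoke the dual argument for the kernel. The natural candidate for the cokernel is built as follows. Using Lemma~\ref{cvf}, take a minimal right $\X_T$-approximation $U_Z \to Z$ of $Z$, and complete $\Sigma^{-1} Z \to U_Z'$ — or rather lift the situation — to a triangle; more precisely, form the triangle $\Sigma^{-1} Z \to U' \to Z' \to Z$ where $U' \to Z$ is the $\X_T$-approximation of $Z$, so that $Z' \in \X_T^{\perp}$ by Wakamatsu's Lemma (Lemma~\ref{wakamatsu}). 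Splicing this with the original triangle via the octahedral axiom produces a triangle $Y \overset{p}{\to} W \to Z' \to \Sigma Y$ together with a commutative square relating $g$ and $p$. I claim $\underline{p} \colon Y \to W$ is the cokernel of $\underline{f}$ in $\CX$.

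The verification has two parts. First, $\underline{p}\,\underline{f} = 0$: from the octahedral construction the composite $Y \to W$ factors (in $\C$) so as to kill the image of $f$, up to a map through $\X_T$; concretely $pf$ factors through $U' \in \X_T$. Second, universality: given any $q \colon Y \to M$ with $\underline{q}\,\underline{f} = 0$, I must factor $\underline{q}$ through $\underline{p}$. Here one argues as in the proof of Lemma~\ref{diagonals}(b): $qf$ factors through an object of $\X_T$, hence (after passing to the minimal right $\X_T$-approximation of $M$ and completing to a morphism of triangles) one gets a commutative diagram of triangles in which the relevant connecting composite vanishes because the approximation term lies in $\X_T^{\perp}$ while $Z' \in \X_T$; then Lemma~\ref{chasing} supplies maps through which $q$ factors, showing $\underline{q}$ lifts along $\underline{p}$ modulo $\X_T$. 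Uniqueness of the lift follows because $\underline{p}$ will be shown to be an epimorphism (it is a weak cokernel that is epi, so a genuine cokernel by Lemma~\ref{weakandepi}), or directly from the construction.

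For the kernel, I would dualise: take a minimal left $\X_T$-approximation and use part (b) of Wakamatsu's Lemma together with Serre duality (which guarantees the needed approximations exist, cf.\ Lemma~\ref{cvf}) to produce a triangle $\Sigma^{-1} Z'' \to K \to X \to Z''$ with $Z''$ in the appropriate orthogonal subcategory, and show the resulting map $\underline{K \to X}$ is a kernel of $\underline{f}$ by the same chase with Lemma~\ref{chasing}, together with Lemma~\ref{diagonals}(a) to identify when it is a monomorphism.

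The main obstacle I anticipate is the bookkeeping in the octahedral/morphism-of-triangles step: one must carefully track which composites vanish \emph{on the nose} in $\C$ versus only \emph{in $\CX$} (i.e.\ up to factoring through $\X_T$), and ensure that the map chosen really is canonical enough that the universal property holds for \emph{all} test maps $q$, not just those coming from a fixed approximation. Getting Lemma~\ref{chasing} to apply requires arranging the hypothesis (vanishing of a specific composite $\delta_2\alpha$ or $\gamma'\delta_3$), and the trick — exactly as in Lemma~\ref{diagonals}(b) — is that replacing $M$ by its minimal right $\X_T$-approximation forces the connecting map into $\X_T^{\perp}$, killing the obstruction because $Z$ (or the relevant cone term) lies in $\X_T$. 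Once that is set up, the rest is diagram-chasing of the kind already rehearsed in the proof of Lemma~\ref{diagonals}.
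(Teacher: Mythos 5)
Your construction is not the one that is needed, and the gap sits exactly where the real content of the lemma lies. You build the candidate cokernel by taking a minimal right $\X_T$-approximation of the cone $Z$ of $f$ and aiming for a triangle $Y \overset{p}{\to} W \to Z' \to \Sigma Y$; the paper's proof (following Koenig--Zhu) instead approximates the \emph{source} $X$ by $\add T$: it takes a minimal right $\add T$-approximation $a\colon T_0\to X$, completes $fa$ to a triangle $T_0\to Y\overset{c}{\to}M\to\Sigma T_0$, and shows $\underline{c}$ is the cokernel. That choice is forced by the strategy you yourself invoke at the end: to upgrade a weak cokernel to a cokernel via Lemma~\ref{weakandepi} you must know the candidate is an epimorphism, and by Lemma~\ref{diagonals}(b) this means the map from its target to its cone must factor through $\X_T$. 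In the paper's construction the cone is $\Sigma T_0$, which lies in $\X_T$ precisely because $T$ is rigid ($\Hom_{\C}(T,\Sigma T_0)=\Ext^1_{\C}(T,T_0)=0$); in yours the cone is $Z'$, which Wakamatsu's Lemma only places in $\X_T^{\perp}$, a subcategory with no relation to $\X_T$ for a general rigid $T$ (your text in fact wavers between $Z'\in\X_T^{\perp}$ and $Z'\in\X_T$). So the epimorphism step is unjustified, and with it the whole argument. Note also that $\add T$ and the rigidity of $T$ never enter your construction, yet they are what make the chase with Lemma~\ref{chasing} work ($\Hom_{\C}(T_0,U)=0$ for $U\in\X_T$ because $T_0\in\add T$) and what give $\underline{c}\,\underline{f}=0$ (from $cfa=0$, the map $cf$ factors through the cone of $a$, which lies in $\X_T$ because $a$ is an $\add T$-approximation).

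Moreover the ``splicing via the octahedral axiom'' is not available as stated: the approximation triangle of $Z$ and the triangle of $f$ share only the object $Z$ and give no composable pair whose octahedron has the shape you want; the natural octahedron one can form (on the composite $\Sigma^{-1}U'\to\Sigma^{-1}Z\overset{h}{\to}X$, where $U'\to Z$ is the approximation) yields a triangle $Z'\to W\to Y\to\Sigma Z'$, i.e.\ a map \emph{into} $Y$, not a map $Y\to W$ with cone $Z'$. One can also see concretely that a $Z$-based $\X_T$-approximation cannot produce the cokernel: in the cluster category of $A_2$ with $T=P_1\oplus P_2$ and $f\colon P_1\to P_2$ the inclusion (so $Z=S_2$), one computes $\Sigma^{-1}S_2\cong\Sigma P_2$, hence $\Hom_{\C}(\Sigma T,S_2)\cong\Ext^1_{\C}(T,P_2)=0$ by rigidity, so the minimal right $\X_T$-approximation of $Z$ is $0\to S_2$ and $Z'\cong\Sigma P_2$; any triangle $Y\to W\to Z'\to\Sigma Y$ then has $W$ isomorphic in $\CX$ to $P_2$ or to $0$, whereas the cokernel of $\underline{f}$ is $\underline{g}\colon P_2\to S_2$, and neither candidate is isomorphic to $S_2$ in $\CX$. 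The remedy is to replace your approximation step by the $\add T$-approximation of $X$ as above (and, for the kernel, the dual construction using a minimal left approximation of $Y$ by the injective objects of $\CX$, cf.\ Lemma~\ref{l:enoughinjectives}); the diagram chase you outline with Lemma~\ref{chasing} then goes through as in the paper.
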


\begin{proof}
We construct a cokernel of $\underline{f}$. The construction of a
kernel is dual. 

Consider a minimal right $\add T$-approximation $a \colon T_0 \to
X$. Compose this with $f$,  and complete the composition $fa$ to a triangle 
\begin{equation}\label{c-tria}
T_0 \to Y \overset{c}{\to} M \to \Sigma T_0
\end{equation}

By the octahedral axiom, there is a commutative diagram

$$
\xymatrix{
T_0 \ar[r] \ar[d]_{a} & Y \ar[r]^{c} \ar@{=}[d] & M \ar[r] \ar[d]^{b} & \Sigma T_0
\ar[d]^{\Sigma a} \\
X \ar[r]_{f} & Y \ar[r]  & Z \ar[r] & \Sigma X \\
}
$$
We claim that $\underline{c}$ is a cokernel for $\underline{f}$.

Consider a map $p \colon Y \to N$, such that $\underline{p}
\underline{f} = 0$. Assume $pf$ factors through an object $U$ in
$\X_T$, so there is a commuting square

$$
\xymatrix{
X \ar[r]^{f} \ar[d]& Y \ar[d]^{p} \\
U \ar[r]^{r} & N  
}
$$

Extend this to a commuting diagram of triangles, and compose with the
previous map of triangles, to obtain the diagram

$$
\xymatrix{
T_0 \ar[r] \ar[d] & Y \ar[r]^{c} \ar@{=}[d] & M \ar[r] \ar[d]^{b} & \Sigma T_0
\ar[d]^{\Sigma a} \\
X \ar[r]^{f} \ar[d] & Y \ar[d]^{p}  \ar[r] &  Z \ar[r] \ar[d]^{d} & \Sigma X
\ar[d] \\
U \ar[r]^{r}  & N \ar[r] & Z' \ar[r] & \Sigma U  
}
$$
The composition $T_0 \to X \to U$ vanishes, since
$U$ is in $\X_T$. 
Hence the composition $M \overset{b}{\to} Z \overset{d}{\to} Z' \to \Sigma U$ also
vanishes.
Now Lemma \ref{chasing} implies that there exist maps $e_1 \colon Y \to U$
and $e_2 \colon M \to N$, such that $p = r e_1 +  e_2 c$.
Since $U$ is in $\X_T$, this implies $\underline{p}$ =
$\underline{e_2} \underline{c}$.

This shows that $\underline{c}$ is a weak cokernel for $f$. 
It is also clear that $\underline{c}$ is an epimorphism,
using the triangle~(\ref{c-tria}).
It then follows from Lemma~\ref{weakandepi}
that $\underline{c}$ is actually a cokernel for $f$.
\end{proof}

\begin{proof}[Proof of Theorem~\ref{preab}]
The additivity of $\C/ \X_T$ follows directly from the additivity of
$\C$. By Lemma \ref{kernelcokernel}, we have that for any map  $f \colon X \to Y$,
the induced map $\underline{f}$ has both a kernel and a cokernel.
\end{proof}

In order to show that $\CX$ is also integral, we need to study its projective
objects.
According to~\cite{maclane}, an object $P$ in a preabelian category (indeed, in
any category) is said to be \emph{projective} if, for any epimorphism
$c \colon B\to C$, any morphism $f \colon P \to C$ factors through $c$:
 
$$\xymatrix{
& P \ar[d]^f \ar@{-->}[dl] \\
B \ar[r]_c & C
}$$

We shall use this definition.
But we note that Rump~\cite[p170]{r01} uses a different definition; the
above diagram should commute only for any cokernel $c$.
Such objects are referred to as \emph{quasi-projectives}
in~\cite[Defn.\ 7.5.2]{osborne} and we shall use this terminology.
Note that the two notions are the same in an abelian category, as then
epimorphisms and cokernels coincide. The dual objects will be referred to
as \emph{quasi-injectives}.

In the following three proofs, we use some arguments based on the proof
of~\cite[Theorem 4.3]{kz}.

\begin{lemma} \label{l:addTprojective}
Every object in $\add T$, when regarded as an object in $\CX$, is projective.
\end{lemma}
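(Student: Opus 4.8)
The plan is to show that for $T_0 \in \add T$, regarded as an object of $\CX$, and any epimorphism $\underline{c}\colon B \to C$ in $\CX$ together with a map $\underline{f}\colon T_0 \to C$, one can lift $\underline{f}$ through $\underline{c}$. First I would pick lifts $c\colon B\to C$ and $f\colon T_0\to C$ in $\C$ itself. The key is to exploit that $T_0$ is a direct summand of a power of $T$, so $\Hom_\C(T_0,-)$ is close to the functor $\Hom_\C(T,-)$ that controls $\X_T$; in particular, since $T$ is rigid, $\Hom_\C(T_0, \Sigma^{-1}Z)$ vanishes for any $Z$ whose ``$Z$-map'' is relevant. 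Complete $c$ to a triangle $\Sigma^{-1}Z \overset{h}{\to} B \overset{c}{\to} C \overset{g}{\to} Z$ in $\C$. By Lemma~\ref{diagonals}(b), since $\underline{c}$ is an epimorphism we have $\underline{g}=0$, i.e.\ $g$ factors through some object $V \in \X_T = (\Sigma T)^\perp$. The point is that $\Hom_\C(T_0, \Sigma V)$ need not vanish directly, but one can run the argument so that the obstruction to lifting lands in $\Hom_\C(T_0, \Sigma^{-1}Z')$ for an object $Z'$ built from $V$, and that group vanishes because $\Ext^1(T, V)=\Ext^1(T,\Sigma^{-1}\Sigma V)$ and $V\in(\Sigma T)^\perp$ means exactly $\Hom_\C(T,\Sigma V)=0$, hence $\Hom_\C(T_0,\Sigma V)=0$.

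Concretely, the composition $f\colon T_0 \to C \overset{g}{\to} Z$ equals a map that factors through $V$; since $V\in\X_T$ and $T_0\in\add T$, we get $\Hom_\C(T_0, -)$ does not immediately kill maps into $V$, so instead I would argue on the level of the triangle. Apply $\Hom_\C(T_0, -)$ to $\Sigma^{-1}Z \overset{h}{\to} B \overset{c}{\to} C \overset{g}{\to} Z$; the obstruction to lifting $f$ is the element $g_*f = gf \in \Hom_\C(T_0, Z)$. We must show this lifts, i.e.\ lies in the image of $c_*$, equivalently that $gf = 0$ as an honest map, or at least factors through $\X_T$ suitably. Because $gf\colon T_0\to Z$ factors through $V\in\X_T$ (as $g$ does), write $gf = v' v$ with $v\colon T_0\to V$, $v'\colon V\to Z$. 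Now I would use a minimal right $\X_T$-approximation or, better, the following: since $V\in\X_T$ and we need to lift a map out of the projective-type object $T_0$, complete $v'\colon V\to Z$ suitably and use Wakamatsu (Lemma~\ref{wakamatsu}) to replace things so the relevant connecting map dies. The cleanest route: apply $\Hom(T_0,-)$ and use that $\Hom(T_0, \Sigma^{-1}(\text{anything in }\X_T^{\perp}))$-type vanishing forces $gf$ into the image of $c_*$ modulo maps through $\X_T$, which is exactly what is needed in $\CX$.

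So the key steps, in order, are: (1) reduce to lifting a representative $f\colon T_0\to C$ in $\C$; (2) form the triangle defining $\underline{c}$ and use Lemma~\ref{diagonals}(b) to write $g$ as factoring through $V\in\X_T$; (3) deduce $gf$ factors through $V$, and then use rigidity of $T$ together with $V\in(\Sigma T)^\perp$ — so $\Hom_\C(T,\Sigma V)=0$ and hence $\Hom_\C(T_0,\Sigma V)=0$ — applied in the long exact sequence obtained by applying $\Hom_\C(T_0,-)$ to a triangle containing $V$, to conclude the obstruction vanishes; (4) conclude $f$ lifts to $\tilde f\colon T_0\to B$ in $\C$, possibly after modifying by a map through $\X_T$, so that $\underline{c}\,\underline{\tilde f}=\underline{f}$ in $\CX$. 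The main obstacle I expect is step (3): bookkeeping which object plays the role of ``$Z'$'' or ``$\Sigma^{-1}(\cdot)$'' so that the relevant $\Hom$ or $\Ext$ group from $T_0$ genuinely vanishes — one has to arrange the triangle so the vanishing comes from $(\Sigma T)^\perp$ (equivalently $\Ext^1(T,-)$ applied to objects of $\X_T$, which is $0$ by definition of $\X_T$) rather than from some group that is not controlled. Once the right triangle is in place, the long exact sequence in $\Hom_\C(T_0,-)$ finishes it, using that $T_0$ is a summand of $T^n$ so these vanishings are inherited from $T$.
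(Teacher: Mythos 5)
Your outline (take a representative, complete $c$ to a triangle, use Lemma~\ref{diagonals}(b) to see that $g$ factors through some $V\in\X_T$, then lift) is the right skeleton, but the crucial step (3) is based on a misreading of $\X_T$, and as a result the one vanishing you actually need is the one you explicitly disclaim. By definition $\X_T=(\Sigma T)^{\perp}$, and $\Ext^1(\Sigma T,V)=\Hom_{\C}(\Sigma T,\Sigma V)\simeq\Hom_{\C}(T,V)$, so $\X_T$ consists precisely of the objects $V$ with $\Hom_{\C}(T,V)=0$ (this is how $\X_T$ is described in the main theorem). Hence for $T_0\in\add T$ and $V\in\X_T$ one has $\Hom_{\C}(T_0,V)=0$ \emph{directly}. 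Your assertion that ``$V\in(\Sigma T)^{\perp}$ means exactly $\Hom_{\C}(T,\Sigma V)=0$'' conflates $(\Sigma T)^{\perp}$ with $T^{\perp}$ (the latter would be the relevant orthogonal only in the cluster-tilting situation), and your statement that $\Hom_{\C}(T_0,-)$ ``does not immediately kill maps into $V$'' is false. Because of this, your step (3) never becomes an argument: the proposed detour through an obstruction in $\Hom_{\C}(T_0,\Sigma V)$ or $\Hom_{\C}(T_0,\Sigma^{-1}Z')$, Wakamatsu's Lemma and $\X_T$-approximations is left entirely unspecified (``complete $v'$ suitably'', ``forces $gf$ into the image of $c_*$ modulo maps through $\X_T$''), and you yourself flag it as the unresolved obstacle. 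As written, the proof does not go through.

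The fix is immediate and is what the paper does: writing $g=v'v$ with $v\colon C\to V$, $V\in\X_T$, the composite $vf\colon T_0\to V$ is zero since $\Hom_{\C}(T_0,V)=0$, so $gf=0$ as an honest map in $\C$; applying $\Hom_{\C}(T_0,-)$ to the triangle $B\overset{c}{\to} C\overset{g}{\to} Z\to\Sigma B$ then lifts $f$ through $c$ already in $\C$ (no modification by maps through $\X_T$ is needed), and passing to $\CX$ gives $\underline{c}\,\underline{\tilde f}=\underline{f}$. So the correct argument is strictly simpler than your plan: once the definition of $\X_T$ is used correctly there is no obstruction group to analyse at all.
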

\begin{proof}
Let $\underline{f} \colon X \to Y$ be an epimorphism in $\CX$, and
$\underline{u} \colon T_0 \to Y$ any morphism, where
$T_0$ lies in $\add T$.
Completing $f$ to a triangle in $\C$, we have the diagram:
$$\xymatrix{
& T_0 \ar[d]_u \\
X \ar[r]_f & Y \ar[r]_g & Z \ar[r] & \Sigma X
}$$
Since $\underline{f}$ is an epimorphism, by Lemma~\ref{diagonals}
we have that $g$ factors through $\X_T$. Hence $gu=0$, so $u$ factors through
$f$ and thus $\underline{u}$ factors through $\underline{f}$ as required.
\end{proof}

\begin{lemma} \label{l:enoughprojectives}
The category $\CX$ has enough projectives.
\end{lemma}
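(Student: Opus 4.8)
The plan is to show that for every object $C$ in $\CX$ there is a projective object mapping onto it via an epimorphism. Given $C$, the natural candidate for the projective cover is an object of $\add T$, and the natural map is a right $\add T$-approximation of $C$ in $\C$; by Lemma~\ref{l:addTprojective} such an object is projective in $\CX$. So first I would take a minimal right $\add T$-approximation $a \colon T_0 \to C$, which exists since $\add T$ is functorially finite (indeed $\Hom_{\C}(T,-)$ is representable and $\C$ is Hom-finite Krull-Schmidt). Then I would pass to $\CX$ and claim that $\underline{a} \colon T_0 \to C$ is an epimorphism.

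To verify this, the key step is to apply Lemma~\ref{diagonals}(b): complete $a$ to a triangle $\Sigma^{-1} Z \to T_0 \overset{a}{\to} C \overset{g}{\to} Z$ in $\C$, and show that $\underline{g} = 0$, i.e.\ that $g$ factors through an object of $\X_T$. The way to see this is to apply $\Hom_{\C}(T,-)$ to the triangle: since $a$ is an $\add T$-approximation, the induced map $\Hom_{\C}(T, T_0) \to \Hom_{\C}(T, C)$ is surjective, so the long exact sequence forces $\Hom_{\C}(T, g) = 0$; but more is needed, namely that $g$ itself (not just $\Hom_{\C}(T,g)$) kills $T$-morphisms in the right sense. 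Here I would invoke Wakamatsu's Lemma (Lemma~\ref{wakamatsu}): the triangle $\Sigma^{-1} Z \to T_0 \to C$ coming from a minimal right $\add T$-approximation has the property that $\Sigma^{-1} Z$, equivalently $Z$ shifted, lies in the appropriate perpendicular category; combined with the fact that applying $\Hom_{\C}(T,-)$ to the triangle shows $\Hom_{\C}(T, Z) $ injects into $\Hom_{\C}(T, \Sigma T_0)$, and using rigidity of $T$, one deduces $\Hom_{\C}(T, Z) = 0$, so $Z \in \X_T$. Then $g \colon C \to Z$ factors (trivially) through the object $Z \in \X_T$, giving $\underline{g} = 0$, and Lemma~\ref{diagonals}(b) yields that $\underline{a}$ is an epimorphism.

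Putting this together: $T_0 \in \add T$ is projective in $\CX$ by Lemma~\ref{l:addTprojective}, and $\underline{a} \colon T_0 \to C$ is an epimorphism, so $\CX$ has enough projectives.

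The main obstacle I anticipate is pinning down exactly why $Z$ lies in $\X_T = (\Sigma T)^{\perp}$ rather than in some a priori larger category. The rigidity hypothesis $\Ext^1_{\C}(T,T) = 0$ is precisely what is needed: from the triangle one gets an exact sequence $\Hom_{\C}(T, T_0) \to \Hom_{\C}(T, C) \to \Hom_{\C}(T, Z) \to \Hom_{\C}(T, \Sigma T_0) = \Ext^1_{\C}(T, T_0)$, and the first map is surjective because $a$ is an $\add T$-approximation while the last term vanishes by rigidity, forcing $\Hom_{\C}(T, Z) = 0$, i.e.\ $Z \in \X_T$. One should double-check that $\X_T$ as defined via $(\Sigma T)^{\perp}$ genuinely coincides with $\{X : \Hom_{\C}(T,X) = 0\}$ — which it does, since $\Ext^1_{\C}(\Sigma T, X) \cong \Hom_{\C}(\Sigma T, \Sigma X) \cong \Hom_{\C}(T, X)$ — so that the two characterisations of $\X_T$ used across the paper agree; this identification is the one subtle point, and once it is in hand the argument is short.
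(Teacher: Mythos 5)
Your proof is correct and takes essentially the same route as the paper: a minimal right $\add T$-approximation $T_0\to C$, completed to a triangle, with the cone shown to lie in $\X_T$ so that Lemma~\ref{diagonals} gives an epimorphism and Lemma~\ref{l:addTprojective} gives projectivity of $T_0$. The only variation is at the cone step: the paper invokes Wakamatsu's Lemma (Lemma~\ref{wakamatsu}), using minimality, whereas you deduce $\Hom_{\C}(T,Z)=0$ directly from the long exact sequence together with rigidity ($\Ext^1_{\C}(T,T_0)=0$); this is equally valid, correctly uses the identification $\X_T=(\Sigma T)^{\perp}=\{X:\Hom_{\C}(T,X)=0\}$, and in fact does not require the approximation to be minimal.
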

\begin{proof}
Let $X$ be an object in $\CX$. and let $f \colon T_0\to X$ be a minimal right
$\add T$ approximation of $X$ in $\C$. Complete it to a triangle:
$$\xymatrix{
U \ar[r] & T_0 \ar[r]^f & X \ar[r] & \Sigma U.
}$$
By Wakamatsu's Lemma (see Lemma~\ref{wakamatsu}), $U$ lies in $T^{\perp}$, so
$\Sigma U$ lies in $\Sigma T^{\perp}=\X_T$. Hence, by Lemma~\ref{diagonals},
$\underline{f} \colon T_0 \to X$ is an epimorphism, as required. It now follows from Lemma~\ref{l:addTprojective} that $\CX$ has enough projectives.
\end{proof}

Dually, it can be shown that:

\begin{lemma} \label{l:enoughinjectives}
\begin{enumerate}
\item[(a)] Every object in $\add \Sigma^2 T$, regarded as an object in $\CX$, is injective.
\item[(b)] The category $\CX$ has enough injectives.
\end{enumerate}
\end{lemma}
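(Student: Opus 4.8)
The plan is to prove Lemma~\ref{l:enoughinjectives} as the precise dual of Lemmas~\ref{l:addTprojective} and~\ref{l:enoughprojectives}, the only genuinely new input being Serre duality, which supplies the dual description of $\X_T$. The key observation is that, since $\Hom_\C(T,C)\simeq D\Hom_\C(C,\nu T)$ naturally in $C$, one has $\X_T=\{\,C\in\C\mid\Hom_\C(C,\nu T)=0\,\}$; thus $\X_T$ is the left $\Hom$-orthogonal of $\nu T$, and $\add\nu T$ (the subcategory written $\add\Sigma^2T$ in the statement) plays, on the injective side, exactly the role that $\add T$ plays on the projective side through the defining property of $\X_T$. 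Two auxiliary facts should be recorded first: $\nu T$ is again rigid (since $\nu$ is an autoequivalence and $T$ is rigid), so $\add\nu T$ is extension-closed; and $\add\nu T$, being the additive closure of a single object in a $\Hom$-finite Krull--Schmidt category, is functorially finite, so minimal left $\add\nu T$-approximations exist and Wakamatsu's Lemma applies to $\add\nu T$.

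For part (a), I would dualise the proof of Lemma~\ref{l:addTprojective}. Let $\underline f\colon X\to Y$ be a monomorphism in $\CX$ and let $\underline u\colon X\to I_0$ be any morphism with $I_0$ in $\add\nu T$. Complete $f$ to a triangle $\Sigma^{-1}Z\overset{h}{\to}X\overset{f}{\to}Y\to Z$ in $\C$. By Lemma~\ref{diagonals}(a) the monomorphism hypothesis gives $\underline h=0$, so $h$ factors in $\C$ through some object $V$ of $\X_T$. Since $\Hom_\C(V,I_0)=0$ by the orthogonality above, $uh=0$ in $\C$; applying $\Hom_\C(-,I_0)$ to the triangle then shows that $u$ factors through $f$, whence $\underline u$ factors through $\underline f$. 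Therefore $I_0$ is injective in $\CX$.

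For part (b), I would dualise the proof of Lemma~\ref{l:enoughprojectives}. Given $X$ in $\CX$, take a minimal left $\add\nu T$-approximation $g\colon X\to I_0$ in $\C$ and complete it to a triangle $\Sigma^{-1}Z\overset{h}{\to}X\overset{g}{\to}I_0\to Z\to\Sigma X$. By Wakamatsu's Lemma (Lemma~\ref{wakamatsu}(b)), applicable since $\add\nu T$ is extension-closed, $Z$ lies in ${}^{\perp}(\nu T)$, i.e.\ $\Ext^1_\C(Z,\nu T)=0$. Translating through Serre duality and the commutation $\Sigma\nu\simeq\nu\Sigma$, one has $\Ext^1_\C(Z,\nu T)=\Hom_\C(Z,\Sigma\nu T)\simeq D\Hom_\C(\Sigma T,Z)\simeq D\Hom_\C(T,\Sigma^{-1}Z)$, so $\Hom_\C(T,\Sigma^{-1}Z)=0$, that is, $\Sigma^{-1}Z\in\X_T$. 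Hence $h$ factors through $\X_T$, so $\underline h=0$, and Lemma~\ref{diagonals}(a) shows that $\underline g\colon X\to I_0$ is a monomorphism. Since $I_0$ is injective by part (a), $\CX$ has enough injectives.

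I expect no essential obstacle: the argument is a mechanical transposition of the projective case. The only points requiring care are the two Serre-duality translations — keeping track of the correct twist ($\nu$, rather than a power of $\Sigma$, together with $\Sigma\nu\simeq\nu\Sigma$) and of the correct side of the $\Hom$-orthogonality — and the elementary verifications that $\nu T$ is rigid and $\add\nu T$ is functorially finite, which are what allow Wakamatsu's Lemma and the approximation step to be reused verbatim.
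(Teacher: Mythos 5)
Your argument is exactly the dualisation that the paper intends (its own proof consists of the single word ``dually''), and the details you supply are correct: for $V\in\X_T$ Serre duality gives $\Hom_{\C}(V,\nu T)\simeq D\Hom_{\C}(T,V)=0$, so maps out of $\X_T$ into $\add \nu T$ vanish, and the proofs of Lemmas~\ref{l:addTprojective} and~\ref{l:enoughprojectives} dualise verbatim, using Lemma~\ref{diagonals}(a) in place of (b) and Wakamatsu's Lemma~\ref{wakamatsu}(b) applied to $\add\nu T$ (which is extension-closed since $\nu T$ is rigid, and functorially finite as the additive closure of one object in a Hom-finite Krull--Schmidt category). Your Serre-duality computation $\Ext^1(Z,\nu T)\simeq D\Hom_{\C}(T,\Sigma^{-1}Z)$ is also correct.

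The one point to flag is your parenthetical identification of $\add\nu T$ with the $\add\Sigma^2T$ appearing in the statement. These coincide only when $\nu\simeq\Sigma^2$ on $\add T$ (the $2$-Calabi--Yau/cluster-tilting situation of Koenig--Zhu, where $\X_T=\add\Sigma T$ and the relevant vanishing $\Hom_{\C}(\Sigma T,\Sigma^2T)=\Ext^1(T,T)=0$ is exactly rigidity); for a general rigid $T$ in a category with Serre duality the two subcategories differ, and the key vanishing $\Hom_{\C}(V,\Sigma^2T)=0$ for all $V\in\X_T$ has no reason to hold. So what you have proved is the lemma with $\add\nu T$ in part (a) --- which is what the dual argument genuinely produces, and which suffices, since it yields part (b), the only part used later (in Corollary~\ref{c:CXintegral}). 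You should therefore either justify or drop the claim that this subcategory ``is'' $\add\Sigma^2T$: as written it is an unproved (and in general false) identification rather than a harmless change of notation, and it is the only gap between your proof and the statement as printed.
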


We recall:

\begin{proposition} \label{p:tobesemi-abelian}
\cite[Cor.\ 2]{r01} \\
If $\A$ is a preabelian category with enough quasi-projectives
(respectively, quasi-injectives), then $\A$ is left (respectively, right) semi-abelian.
\end{proposition}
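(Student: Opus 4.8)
Since this proposition is attributed to Rump, one could simply invoke \cite[Cor.\ 2]{r01}; but here is the argument I would give. By duality (passing to $\A^{\op}$) it is enough to show that a preabelian category $\A$ with enough quasi-projectives is left semi-abelian, i.e.\ that an arbitrary morphism $f \colon A \to B$ factors as $f = ip$ with $p$ a cokernel and $i$ a monomorphism. As recalled in Section~\ref{s:preabelian}, once such a factorisation exists $p$ is automatically $\coim(f) = \coker(\ker(f))$, which also tells me what $p$ should be.

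First I would construct the candidate factorisation. Let $k \colon K \to A$ be a kernel of $f$, and let $p \colon A \to C$ be the cokernel of $k$, so that $p$ is a cokernel by construction and $C = \coim(f)$. Since $fk = 0$, the universal property of $p$ produces a unique morphism $i \colon C \to B$ with $ip = f$. Everything then reduces to checking that $i$ is a monomorphism.

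The plan for that is a diagram chase using quasi-projectivity. Suppose $h \colon X \to C$ satisfies $ih = 0$; I must show $h = 0$. Using that $\A$ has enough quasi-projectives, choose an epimorphism $q \colon P \to X$ with $P$ quasi-projective. Since $p$ is a cokernel and $P$ is quasi-projective, the morphism $hq \colon P \to C$ lifts through $p$: write $hq = pg$ with $g \colon P \to A$. Then $fg = (ip)g = i(hq) = (ih)q = 0$, so $g$ factors through the kernel $k$, say $g = kg'$; hence $hq = pg = pkg' = 0$ because $pk = 0$. As $q$ is an epimorphism this forces $h = 0$. Thus $i$ is a monomorphism and $f = ip$ is the desired factorisation; the right semi-abelian statement follows by the dual argument.

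I do not expect a real obstacle here --- the argument is short --- but two points need attention. One must take the covering map $q$ to be an epimorphism, so that $hq = 0$ actually yields $h = 0$; and $p$ must be the genuine cokernel $\coker(\ker f)$, not merely some epimorphism, since it is precisely cokernels through which quasi-projectives are required to lift maps. Given the results already available ($\CX$ is preabelian with enough projectives, hence enough quasi-projectives, by Theorem~\ref{preab} and Lemmas~\ref{l:addTprojective} and~\ref{l:enoughprojectives}), the hypotheses of the proposition hold for $\CX$, which is the application we are after.
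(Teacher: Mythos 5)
Your argument is correct, and it takes a different route from the one the paper has in mind. The paper does not prove this proposition at all --- it simply cites Rump \cite[Cor.\ 2]{r01} --- and the closest argument it actually writes out is the proof of Proposition~\ref{p:tobeintegral}, which follows Rump's scheme: one invokes the pullback characterisation of left semi-abelian categories (Proposition~\ref{p:semi-abelianlift}), takes an epimorphic cover $P \to B$ with $P$ (quasi-)projective, lifts along the cokernel $d$, and passes through the pullback to conclude that $a$ is an epimorphism. You instead verify the definition directly: form $p=\coker(\ker f)$, induce $i$ with $f=ip$, and show $i$ is a monomorphism by covering a test object with a quasi-projective, lifting along the cokernel $p$, and factoring through $\ker f$. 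Both proofs rest on exactly the same two ingredients (an epimorphism from a quasi-projective onto any object, plus the lifting property along cokernels), but yours is self-contained --- it does not need Proposition~\ref{p:semi-abelianlift} --- while the pullback-based scheme is the one that transfers verbatim to the integral refinement in Proposition~\ref{p:tobeintegral}, where cokernels are replaced by arbitrary epimorphisms and quasi-projectives by genuine projectives. Your two cautionary remarks (that the cover must be an epimorphism, and that one lifts only along the cokernel $p=\coker(\ker f)$) are precisely the right points to flag, and your observation that $\CX$ satisfies the hypotheses via Theorem~\ref{preab} and Lemmas~\ref{l:addTprojective} and~\ref{l:enoughprojectives} matches the paper's use of the result.
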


It already follows from Proposition~\ref{p:tobesemi-abelian}
that $\CX$ is semi-abelian, using Lemmas~\ref{l:enoughprojectives}
and~\ref{l:enoughinjectives}. However, a modification of the
argument in the proof of this result allows us to show:

\begin{proposition} \label{p:tobeintegral}
Let $\A$ be a preabelian category.
\begin{enumerate}
\item Suppose that $\A$ has enough projectives. Then $\A$ is left integral.
\item Suppose that $\A$ has enough injectives. Then $\A$ is right integral.
\item Suppose that $\A$ has enough projectives and enough injectives.
Then $\A$ is integral.
\end{enumerate}
\end{proposition}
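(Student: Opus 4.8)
The plan is to establish (1) directly from the universal property of the pullback together with the (strong) lifting property of projective objects; part (2) is then the formal dual, and part (3) is immediate.

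For (1), suppose we are given a pullback diagram~\eqref{e:pullback} in which $d$ is an epimorphism; I must show that $a$ is an epimorphism. Since $\A$ has enough projectives, choose an epimorphism $\pi\colon P\to B$ with $P$ projective. The composite $c\pi\colon P\to D$ then factors through $d$, say $c\pi=ds$ for some $s\colon P\to C$, because $P$ is projective and $d$ is an epimorphism. The pair $(\pi,s)$ is a cone on the cospan $(c\colon B\to D,\ d\colon C\to D)$, since $c\pi=ds$, so the universal property of the pullback produces a morphism $t\colon P\to A$ with $at=\pi$ (and $bt=s$). As $\pi$ factors through $a$ and $\pi$ is an epimorphism, $a$ is an epimorphism: if $ga=0$ for some $g$, then $g\pi=gat=0$ and hence $g=0$. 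This shows $\A$ is left integral.

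For (2), I argue dually. Consider a span $a\colon A\to B$, $b\colon A\to C$ with pushout $D$, giving morphisms $c\colon B\to D$, $d\colon C\to D$ with $ca=db$, and suppose $a$ is a monomorphism; I must show $d$ is a monomorphism. Using that $\A$ has enough injectives, pick a monomorphism $\iota\colon C\to I$ with $I$ injective. The composite $\iota b\colon A\to I$ factors through $a$, say $\iota b=ua$ for some $u\colon B\to I$, because $I$ is injective and $a$ is a monomorphism. The pair $(u,\iota)$ is a cocone on the span $(a,b)$, so the universal property of the pushout yields $v\colon D\to I$ with $vd=\iota$ (and $vc=u$); since $\iota$ factors through $d$ and $\iota$ is a monomorphism, $d$ is a monomorphism. (Alternatively, (2) is just (1) applied to $\A^{\op}$.) Part (3) then follows from (1) and (2) together with the definition of an integral category.

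I expect no serious obstacle here; the single point to keep in mind is that ``projective'' is used in the strong sense of lifting against \emph{all} epimorphisms (not merely cokernels, as for quasi-projectives), and this is precisely what lets the argument cope with an arbitrary epimorphism $d$ rather than only a cokernel --- this is the promised modification of the proof of Proposition~\ref{p:tobesemi-abelian}. One should also verify the trivial compatibility identities $c\pi=ds$ and $\iota b=ua$ before invoking the universal properties, but these hold by construction. Combined with Lemmas~\ref{l:enoughprojectives} and~\ref{l:enoughinjectives}, part (3) will give that $\CX$ is integral.
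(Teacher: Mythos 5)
Your proof is correct and follows essentially the same route as the paper's: take an epimorphism from a projective onto $B$, lift its composite with $c$ through the epimorphism $d$ using (strong) projectivity, and factor through the pullback to conclude that $a$ is an epimorphism, with (2) obtained dually via injectives and (3) immediate. Your explicit remark that projectivity in the strong sense (lifting against all epimorphisms, not just cokernels) is what upgrades Rump's semi-abelian conclusion to integrality is exactly the modification the paper intends.
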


\begin{proof}
As remarked above, we use an approach similar to the proof of~\cite[Cor.\ 2]{r01}.
For (a), suppose we are given a pullback diagram:
\begin{equation}
\label{e:apullback}
\xymatrix{
A \ar[r]^{\underline{a}} \ar[d]_{\underline{b}} & B \ar[d]^{\underline{c}} \\
C \ar[r]_{\underline{d}} & D
}
\end{equation}
in $\CX$ with $\underline{d}$ an epimorphism.
Since $\A$ has enough projectives, there is an epimorphism
$\underline{a}' \colon P \to B$ in $\CX$ where $P$ is projective in $\A$.
Since $\underline{d}$ is an epimorphism, there is a map
$\underline{b}' \colon P \to C$ in $\CX$ such that
$\underline{c}\underline{a}'= \underline{d}\underline{b}'$.
Since the diagram~\eqref{e:apullback} is a pullback,
there is a map $\underline{e} \colon P \to A$ such that
$\underline{a}\underline{e}=\underline{a}'$
and $\underline{b}\underline{e}=\underline{b}'$.
Since $\underline{a}'$ is an epimorphism, so is $\underline{a}$,
and (a) follows.
\begin{equation*}
\xymatrix{
P \ar@/^1pc/[drr]^{\underline{a}'} \ar@/_1pc/[ddr]_{\underline{b}'} \ar@{-->}[dr]_{\underline{e}} \\
& A \ar[r]^{\underline{a}} \ar[d]_{\underline{b}} & B \ar[d]^{\underline{c}} \\
& C \ar[r]_{\underline{d}} & D}
\end{equation*}
The proof of (b) is dual to the proof of (a), and (c) follows from (a)
and (b).
\end{proof}

We also remark that, for a semi-abelian category, left integrality is
equivalent to right integrality (see~\cite[Cor. p173]{r01}).

\begin{corollary} \label{c:CXintegral}
The category $\CX$ is integral.
\end{corollary}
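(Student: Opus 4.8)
The plan is to obtain this as an immediate consequence of Proposition~\ref{p:tobeintegral}(3), which asserts that any preabelian category with enough projectives and enough injectives is integral. So the task reduces to checking that $\CX$ meets all three hypotheses. Preabelianity is precisely Theorem~\ref{preab}. That $\CX$ has enough projectives is Lemma~\ref{l:enoughprojectives}: one takes a minimal right $\add T$-approximation $f\colon T_0\to X$, completes it to a triangle, observes via Wakamatsu's Lemma that the cone lies in $\X_T$, so that $\underline{f}$ is an epimorphism by Lemma~\ref{diagonals}, and then invokes Lemma~\ref{l:addTprojective} to see its source $T_0$ is projective in $\CX$. Dually, Lemma~\ref{l:enoughinjectives} gives enough injectives, the relevant injective objects coming from $\add\Sigma^2 T$. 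Feeding these three facts into Proposition~\ref{p:tobeintegral}(3) yields integrality of $\CX$.

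A point worth recording along the way is the consistency of terminology: Proposition~\ref{p:tobeintegral} is stated for projectives and injectives in the sense of~\cite{maclane} (lifting along \emph{all} epimorphisms), not for Rump's quasi-projectives (lifting only along cokernels). It is therefore essential that Lemmas~\ref{l:addTprojective}, \ref{l:enoughprojectives} and~\ref{l:enoughinjectives} are established with the stronger notion, which they are --- Lemma~\ref{l:addTprojective} shows that objects of $\add T$ lift along arbitrary epimorphisms of $\CX$, using that an epimorphism $\underline{f}$ has cone factoring through $\X_T$ (Lemma~\ref{diagonals}), so that the relevant composite with the connecting map vanishes in $\C$ itself. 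With this alignment in place, no further argument is needed.

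Consequently there is no real obstacle remaining; the substantive work has already been done in assembling Theorem~\ref{preab}, Lemmas~\ref{l:enoughprojectives}--\ref{l:enoughinjectives}, and Proposition~\ref{p:tobeintegral}. (One could equally argue that $\CX$ is semi-abelian by Proposition~\ref{p:tobesemi-abelian}, left integral by Proposition~\ref{p:tobeintegral}(1), and hence integral by the remark that for semi-abelian categories left and right integrality coincide~\cite[Cor. p173]{r01}; but invoking Proposition~\ref{p:tobeintegral}(3) directly is the cleanest route.) The corollary then follows.
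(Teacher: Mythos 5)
Your proof is correct and follows essentially the same route as the paper: the corollary is deduced by combining Theorem~\ref{preab}, Lemmas~\ref{l:addTprojective}, \ref{l:enoughprojectives} and~\ref{l:enoughinjectives} with Proposition~\ref{p:tobeintegral}. Your remark on the distinction between projectives and Rump's quasi-projectives is a sensible consistency check, but no new argument beyond the paper's is involved.
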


\begin{proof}
This follows from Lemmas~\ref{l:addTprojective},~\ref{l:enoughprojectives}
and~\ref{l:enoughinjectives}, together with Proposition~\ref{p:tobeintegral}.
\end{proof}

\section{Localisation}
\label{s:localisation}

Let $\D$ be a category. A class $\R$ of morphisms in $\D$ is said to
admit a \emph{calculus of right fractions}~\cite[I.2]{gz} provided that the following
holds:

\begin{enumerate}
\item[(RF1)] The identity morphisms
of $\D$ lie in $\R$ and $\R$ is closed under
composition.
\item[(RF2)] Any diagram of the form:
$$\xymatrix{
& B \ar[d]^f \\
C \ar[r]_r & D 
}$$
with $r\in \R$ has a completion to a commuting square of the following form:
$$\xymatrix{
A \ar[r]^{r'} \ar[d]_{f'} & B \ar[d]^{f} \\
C \ar[r]_r & D
}$$
with $r'$ in $\R$.
\item[(RF3)]
If $r \colon Y\to Y'$ lies in $\R$ and $f,f' \colon X \to Y$ are maps such that
$rf=rf'$ then there is a map $r' \colon X'\to X$ in $\R$ such that $fr'=f'r'$.
\end{enumerate}

There is a dual set of axioms, (LF1-3), for left fractions.
Let us assume that $\D$ is skeletally small, so that the Gabriel-Zisman
localisation $\D_{\R}$ of $\D$ at $\R$ exists.
In this situation, $\D_{\R}$ has a very nice
description; see~\cite[I.2]{gz} or~\cite[Sect.\ 3]{krause}.
The objects in $\D_{\R}$ are the same as the objects of $\D$. The morphisms
from $X$ to $Y$ are \emph{right fractions}, of the form
$$\xymatrix{
X & A \ar[l]_r \ar[r]^f & Y
}$$ 
denoted $[r,f]_{\text{RF}}$, up to an equivalence relation: two such fractions $[r,f]_{\text{RF}}$ and
$[r',f']_{\text{RF}}$ are equivalent if there is a commutative diagram of the form:
$$\xymatrix{
& A \ar[dl]_r \ar[dr]^f \\
X & A'' \ar[l]_(0.4){r''} \ar[r]^(0.4){f''} \ar[u] \ar[d] & Y \\
& A' \ar[ul]^{r'} \ar[ur]_{f'}
}$$
where $r''$ lies in $\R$.

The composition of two right fractions $[r',f']_{\text{RF}}\circ [r,f]_{\text{RF}}$ is
given by the right fraction $[rr'',f'f'']_{\text{RF}}$ where $r''$, a morphism in $\R$,
and $f''$, a morphism in $\C$, are obtained from an application of axiom (RF2) which
gives rise to the following commutative diagram:
$$\xymatrix{
&& C \ar[dl]_{r''} \ar[dr]^{f''} \\
& A \ar[dl]_r \ar[dr]_f && B \ar[dl]^{r'} \ar[dr]^{f'} \\
X && Y && Z
}$$

The localisation functor from $\D$ to $\D_{\R}$ takes a morphism $f$ to $[id,f]_{\text{RF}}$.
We shall denote this image by $[f]$.
For $r\in \R$, $[r,id]_{\text{RF}}$ is the inverse of $[r]$ (i.e.\ 
the formal inverse adjoined in the localisation). We shall denote it $x_r$.
Thus, every morphism in $\D_{\R}$ has the form $[r,f]_{\text{RF}}=[f]x_r$, where
$f$ is a morphism in $\D$ and $r$ lies in $\R$. Similarly,
if $\D$ satisfies (LF1-3), there is a dual description of $\D_{\R}$ by left
fractions, and so every morphism in $\D_{\R}$ can be written in the form
$[g,s]_{\text{LF}}=x_s[g]$, where $g$ is a morphism in $\D$ and $s$ lies in $\R$.

According to~\cite[p173]{r01}, the following result holds. We include a proof for
the convenience of the reader.

\begin{proposition} \cite[p173]{r01} \label{p:integralfractioncharacterisation}
Let $\A$ be a semi-abelian category. Then $\A$ is integral if and only if the class
$\R$ of regular morphisms in $\A$ admits a calculus of right fractions and a calculus of left fractions.
\end{proposition}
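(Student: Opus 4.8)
The plan is to prove the two directions of the equivalence separately, keeping the left/right symmetry in mind so that only one direction of each needs detailed work.

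For the ``only if'' direction, assume $\A$ is semi-abelian and integral; I would verify that the class $\R$ of regular morphisms satisfies (RF1)--(RF3), the verification of (LF1)--(LF3) being dual. Axiom (RF1) is immediate: identities are regular, and a composite of two monomorphisms is a monomorphism, a composite of two epimorphisms is an epimorphism, so $\R$ is closed under composition. For (RF2), given $f\colon B\to D$ and $r\colon C\to D$ with $r\in\R$, I would form the pullback of $f$ and $r$ (which exists, as noted after~\eqref{e:pullback}), giving a square with sides $r'\colon A\to B$ and $f'\colon A\to C$; by Proposition~\ref{p:integralcharacterisation}, since $r$ is regular and the square is a pullback, $r'$ is regular, so $r'\in\R$. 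For (RF3), suppose $r\colon Y\to Y'$ is in $\R$ and $f,f'\colon X\to Y$ satisfy $rf=rf'$; since $r$ is a monomorphism, $f=f'$ already, so taking $r'=\operatorname{id}_X$ works. (In fact the presence of a monomorphism in $\R$ makes (RF3) trivial, and dually (LF3); this is worth remarking.) Thus $\R$ admits a calculus of right fractions, and dually of left fractions.

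For the ``if'' direction, assume $\A$ is semi-abelian and $\R$ admits a calculus of right fractions; I want to deduce left integrality, and then right integrality follows by the dual argument from the calculus of left fractions (or, since $\A$ is semi-abelian, from the remark that left and right integrality are equivalent, citing~\cite[Cor. p173]{r01}). So take a pullback diagram~\eqref{e:pullback} with $d$ an epimorphism; I must show $a$ is an epimorphism. The idea is to combine the pullback property with (RF2): however (RF2) only applies when the bottom map lies in $\R$, i.e.\ is regular, not merely an epimorphism. The key move is therefore to use semi-abelianity to factor $d$. Since $\A$ is left semi-abelian, $d$ factors as $d=i p$ with $p$ a cokernel (hence an epimorphism) and $i$ a monomorphism; but $d$ is an epimorphism, so $i$ is an epimorphism, hence $i$ is regular and lies in $\R$. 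Now I would build the pullback in two stages — pull $c$ back along $i$, then along $p$ — and analyse the two resulting squares. The stage pulling back along the regular map $i$ can be handled via (RF2): the completion square produced by (RF2) is, up to the uniqueness properties of pullbacks, the pullback square, and its top edge lies in $\R$, hence is an epimorphism. The stage pulling back along the cokernel $p$ is governed by Proposition~\ref{p:semi-abelianlift}, which says precisely that in a pullback along a cokernel the opposite edge is an epimorphism. Composing the two epimorphisms gives that $a$ is an epimorphism.

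The main obstacle I anticipate is the bookkeeping in the ``if'' direction: one must be careful that the square obtained from axiom (RF2) can legitimately be identified with a pullback square (it need not be literally the pullback, but it maps to it, and one only needs the resulting top edge to be an epimorphism, which can be argued from the factorisation through the pullback together with $r'\in\R$ being an epimorphism — composition with an epi on the source side). An alternative, cleaner route that avoids the two-stage pullback is to argue directly: given the pullback~\eqref{e:pullback} with $d$ epi, factor $d=ip$ as above with $i\in\R$; form the pullback of $c$ and $i$, obtaining a square whose top edge $a''$ is regular by Proposition~\ref{p:integralcharacterisation}\,(b)'s proof-idea — but in fact we do not yet know $\A$ is integral, so instead we get $a''$ epi directly from the (RF2) completion being a pullback; then the original pullback factors as a composite of this pullback with the pullback of the induced map along $p$, and Proposition~\ref{p:semi-abelianlift} finishes it. I would present whichever of these is shortest once the details are checked; either way the semi-abelian factorisation of an epimorphism into (cokernel)$\circ$(epic monomorphism) together with the translation of (RF2) into a pullback statement are the two essential ingredients.
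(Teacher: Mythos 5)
Your argument is correct, and the forward direction coincides with the paper's: (RF1) and (RF3) are immediate from regularity, and (RF2) is obtained by pulling back along the regular morphism. (The paper proves the needed fact that a pullback of a regular morphism is regular directly, from left integrality for the epimorphism part and Lemma~\ref{l:pullbackofmono} for the monomorphism part, whereas you cite the implication (a)$\Rightarrow$(b) of Proposition~\ref{p:integralcharacterisation}; both are legitimate.) Where you genuinely diverge is the converse. The paper uses (RF2) only to show that in any pullback square with regular base the parallel edge is again regular --- the key step being that the (RF2) completion factors through the pullback, so the pullback's top edge inherits the epimorphism property --- and then invokes the nontrivial implication (b)$\Rightarrow$(a) of Proposition~\ref{p:integralcharacterisation} to conclude integrality. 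You instead prove left integrality directly: factor an arbitrary epimorphism $d=ip$ with $p$ a cokernel and $i$ regular (using left semi-abelianity), pull back in two stages, handle the regular stage by the same factor-through-the-pullback trick and the cokernel stage by Proposition~\ref{p:semi-abelianlift}, and paste. This buys self-containedness: you only need the elementary characterisation of left semi-abelian categories rather than Rump's Proposition~\ref{p:integralcharacterisation}, in effect reproving the relevant part of it; the cost is the extra bookkeeping of the two-stage pullback and the pasting lemma. Your route also still yields the sharper statement of Corollary~\ref{c:admitcalculus}, since left integrality comes from the right-fraction axioms alone and the left/right equivalence for semi-abelian categories can then be quoted. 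One small caution: your phrase that the (RF2) square ``is, up to the uniqueness properties of pullbacks, the pullback square'' is not accurate as written (the (RF2) square need not be a pullback), but the repair you give yourself --- it merely factors through the pullback, and an epimorphism factoring as $a\circ e$ forces $a$ to be an epimorphism --- is exactly right and is the same mechanism the paper uses, so there is no gap.
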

\begin{proof}
We firstly note that it follows from the definitions that $\R$ satisfies (RF1).
Let $r,f,f'$ be as in (RF3) above and suppose that $rf=rf'$. Then $r(f-f')=0$.
Since $r$ is regular, it is a monomorphism, so $f-f'=0$ and $f=f'$. Thus we can just
take $r'$ to be the identity map on $X$ and we see that (RF3) is satisfied.

We will now show that $\A$ is left integral if and only if (RF2) holds.
Suppose first that $\A$ is left integral and we are given a diagram:
\begin{equation}
\xymatrix{
& B \ar[d]^f \\
C \ar[r]_r & D
}
\end{equation}
with $r$ regular.
Let
$$\xymatrix{
A \ar[r]^a \ar[d]_b & C \ar[d]^f \\
B \ar[r]_r & D
}$$
be the pullback of this diagram. Since $r$ is an epimorphism and $\A$ is
left integral, $a$ is also an epimorphism. Since $r$ is a monomorphism,
$a$ is a monomorphism by Lemma~\ref{l:pullbackofmono}. Hence $a$ is also
regular and we see that (RF2) holds.

Conversely, suppose that (RF2) holds and consider a pullback diagram
of the form:
\begin{equation} \label{e:pullback2}
\xymatrix{
A \ar[r]^a \ar[d]_b & B \ar[d]^c \\
C \ar[r]_d & D
}
\end{equation}
with $d$ regular. By (RF2), there is a commuting
diagram
$$\xymatrix{
A' \ar[r]^{a'} \ar[d]_{b'} & B \ar[d]^c \\
C \ar[r]_d & D
}$$
with $a'$ regular. Since the diagram~\eqref{e:pullback2} is a pullback,
we have a map $e \colon A'\to A$ making the diagram:
\begin{equation}
\xymatrix{
A' \ar@/^1pc/[drr]^{a'} \ar@/_1pc/[ddr]_{b'} \ar@{-->}[dr]_e \\
& A \ar[r]^a \ar[d]_b & B \ar[d]^c \\
& C \ar[r]_d & D
}
\end{equation}
commute. Since $a'$ is regular, it is an epimorphism, so $a$ is also
an epimorphism. Again by Lemma~\ref{l:pullbackofmono}, the fact that
$d$ is a monomorphism implies that $a$ is also a monomorphism. Hence
$a$ is regular.
By Proposition~\ref{p:integralcharacterisation}, $\A$ is integral,
hence left integral.

Thus we have seen that $\A$ is left integral if and only if (RF2) holds,
if and only if $\R$ admits a calculus of right fractions.
A similar argument shows that $\A$ is right integral if and only if
$\R$ admits a calculus of left fractions. The result is proved.
\end{proof}

We note that the proof shows that in fact:

\begin{corollary} \label{c:admitcalculus}
Let $\A$ be a semi-abelian category. Then $\A$ is integral if and only if the class
$\R$ of regular morphisms in $\A$ admits a calculus of right fractions
(respectively, a calculus of left fractions).
\end{corollary}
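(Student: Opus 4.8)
The plan is to read the statement straight out of the proof of Proposition~\ref{p:integralfractioncharacterisation}, observing that that proof actually establishes slightly more than was stated there. First I would recall that, as shown at the start of that proof, the class $\R$ of regular morphisms in any semi-abelian category $\A$ automatically satisfies (RF1) and (RF3): the identities are regular and regular morphisms are closed under composition, while (RF3) holds trivially because a regular morphism is in particular a monomorphism. Dually, $\R$ automatically satisfies (LF1) and (LF3). Hence $\R$ admits a calculus of right fractions if and only if $\R$ satisfies (RF2), and $\R$ admits a calculus of left fractions if and only if $\R$ satisfies the dual axiom (LF2).

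Next I would isolate the key point already present in that proof: the implication ``(RF2) holds $\Rightarrow$ $\A$ is left integral'' was proved there by taking an arbitrary pullback square~\eqref{e:pullback2} with $d$ regular, using (RF2) together with Lemma~\ref{l:pullbackofmono} to see that the opposite edge $a$ is regular, and then invoking Proposition~\ref{p:integralcharacterisation}(b) to conclude that $\A$ is \emph{integral}, not merely left integral. Conversely, if $\A$ is integral then it is in particular left integral, and the first half of that same proof shows (RF2) holds. Combining, for a semi-abelian category $\A$ we obtain the chain of equivalences: $\A$ is integral $\iff$ (RF2) holds for $\R$ $\iff$ $\R$ admits a calculus of right fractions.

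Finally, running the dual of this argument — replacing pullbacks by pushouts, Lemma~\ref{l:pullbackofmono} by its dual, and Proposition~\ref{p:integralcharacterisation}(b) by its pushout version — gives $\A$ is integral $\iff$ (LF2) holds for $\R$ $\iff$ $\R$ admits a calculus of left fractions. Together, these yield both the ``right fractions'' and the ``respectively, left fractions'' forms of the corollary. I do not anticipate a serious obstacle; the only subtlety to flag is the observation that the proof of Proposition~\ref{p:integralfractioncharacterisation} never needs both (RF2) and (LF2) simultaneously in order to force integrality — a single one of them, fed into Proposition~\ref{p:integralcharacterisation}, already suffices — and it is precisely this that makes the two localisation conditions individually, and not just jointly, equivalent to integrality.
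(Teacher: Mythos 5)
Your proposal is correct and follows essentially the same route as the paper: the paper's own proof of Corollary~\ref{c:admitcalculus} likewise just re-reads the proof of Proposition~\ref{p:integralfractioncharacterisation}, noting that integrality gives left integrality and hence (RF1)--(RF3), while conversely (RF1)--(RF3) already force full integrality via Proposition~\ref{p:integralcharacterisation}, with the left-fraction case handled by the dual argument. Your extra remark that the dual case needs the pushout version of Proposition~\ref{p:integralcharacterisation} (or, equivalently, the cited fact that left and right integrality agree for semi-abelian categories) is a fair and accurate gloss on what the paper leaves implicit.
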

\begin{proof}
If $\A$ is integral then it is left integral.
The proof of Proposition~\ref{p:integralfractioncharacterisation} shows that
then RF1-3 are satisfied by $\R$ and conversely
that if RF1-3 are satisfied then $\A$ is integral. The statement for right fractions
follows, and a dual argument shows the statement for left fractions.
\end{proof}

In the rest of this section, we assume that $\A$ is skeletally small, so that
localisations exist.

\begin{remark} \label{r:inheritsadditivity}
We note that, by~\cite[3.3, Cor.\ 2]{gz}, the localisation of $\A$ at $\R$ in
the situation of Proposition~\ref{p:integralfractioncharacterisation} is
additive, since $\A$ is additive and $\R$ admits a
calculus of right fractions. Furthermore, the localisation functor is additive. See also~\cite[10.3.11]{weibel}.
\end{remark}

\begin{lemma} \label{l:faithfullocalisation}
Let $\A$ be an integral category and let $\R$ be the class of regular
morphisms in $\A$. Then the localisation functor $L \colon \A\to \A_{\R}$ is faithful.
\end{lemma}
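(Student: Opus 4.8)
The plan is to show that if $\underline{f}\colon X\to Y$ is a morphism in $\A$ that is inverted by the localisation functor, then $\underline{f}$ was already zero; faithfulness of $L$ then follows because $\A$ is additive (Remark~\ref{r:inheritsadditivity}) and $L$ is additive, so $L(\underline{f})=0$ forces $\underline{f}=0$, which gives injectivity of $L$ on each $\Hom$-group. So the real content is: if $[\underline{f}]=0$ in $\A_{\R}$ then $\underline{f}=0$ in $\A$.

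First I would use the description of $\A_{\R}$ by right fractions from Section~\ref{s:localisation}: since $\R$ admits a calculus of right fractions (Proposition~\ref{p:integralfractioncharacterisation}, using that $\A$ is integral and hence semi-abelian by Proposition~\ref{p:integralimpliessemi-abelian}), the relation $[\underline{f}]=[\mathrm{id},\underline{f}]_{\mathrm{RF}}=0=[\mathrm{id},0]_{\mathrm{RF}}$ unwinds, via the equivalence relation on right fractions, to the existence of an object $A''$ and a morphism $r''\colon A''\to X$ in $\R$ (found by a roof over the two fractions) such that $\underline{f}r''=0$. Here one has to be a little careful with how the equivalence of two fractions is spelled out; the cleanest route is to note that two right fractions with the same left leg $\mathrm{id}_X$ are equivalent iff there is $r''\in\R$ with $r''$ equalising the two right legs after composition, i.e.\ $\underline{f}r''=0\cdot r''=0$. (If one prefers, one can instead invoke (RF3)-type reasoning: $[\underline{f}]=0$ means $[\underline{f}]$ composed appropriately vanishes, and chase it back.) The key point extracted is: \emph{there is a regular morphism $r''$ with $\underline{f}r''=0$.}

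Then the finish is immediate: a regular morphism is by definition an epimorphism, so $\underline{f}r''=0$ with $r''$ epi yields $\underline{f}=0$. Combining with the additivity argument of the first paragraph completes the proof that $L$ is faithful.

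I expect the main obstacle to be purely bookkeeping: correctly teasing out of the Gabriel--Zisman equivalence relation the single regular morphism $r''$ with $\underline{f}r''=0$, rather than a zig-zag of several morphisms. One has to make sure the common refinement of the trivial fraction $[\mathrm{id},0]_{\mathrm{RF}}$ and $[\mathrm{id},\underline{f}]_{\mathrm{RF}}$ really does produce a morphism in $\R$ (not merely in $\A$) sitting over $X$ on the left, which is exactly what the definition of equivalence of fractions guarantees; once that is in hand, the rest is a one-line diagram chase using that regular morphisms are epimorphisms. No delicate category-theoretic input beyond what is already recalled in Section~\ref{s:localisation} is needed.
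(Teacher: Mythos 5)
Your argument is correct and matches the paper's own proof: both reduce faithfulness to showing $[f]=0$ implies $f=0$, unwind the equivalence of the right fractions $[\mathrm{id},f]_{\mathrm{RF}}$ and $[\mathrm{id},0]_{\mathrm{RF}}$ to produce a regular morphism $r''$ with $fr''=0$, and conclude since $r''$ is an epimorphism. The bookkeeping point you flag is handled exactly as you suggest: with both left legs equal to the identity, the comparison maps in the equivalence diagram coincide with $r''\in\R$, so no extra zig-zag arises.
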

\begin{proof}
By Corollary~\ref{c:admitcalculus} (and
recalling~\ref{p:integralimpliessemi-abelian}), $\R$ admits a calculus
of right fractions.
Let $f \colon X \to Y$ be a morphism in $\A$ and suppose that $[f]=0$.
Then we have a commutative diagram:
$$\xymatrix{
& A \ar[dl]_{id} \ar[dr]^f \\
X & A'' \ar[l]_(0.4){r''} \ar[r]^(0.4){f''} \ar[u]^u \ar[d]_v & Y \\
& A' \ar[ul]^{id} \ar[ur]_{0}
}$$
We see that $r''=u=v$ is regular, and $fu=0$. Since $u$ is an epimorphism,
$f=0$ as required.
\end{proof}

We remark that, as a consequence, $[r,f]_{\text{RF}}=[g,s]_{\text{LF}}$ if and only if
$[f]x_r=x_s[g]$, if and only if $[sf]=[gr]$, if and only if $sf=gr$, as noted
in~\cite[p173]{r01}.

We note that:

\begin{lemma} \label{l:epicharacterisation}
Let $\A$ be an integral category and let $\R$ be the class of regular
morphisms in $\A$. Then a morphism $f$ in $\A$ is an epimorphism if and only if
$[f]$ is an epimorphism. It is a monomorphism if and only if $[f]$ is a monomorphism.
\end{lemma}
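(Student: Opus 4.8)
The plan is to prove both equivalences by a symmetric argument, so I will concentrate on the statement for epimorphisms; the monomorphism case is dual. One direction is essentially free: if $f$ is an epimorphism in $\A$, then since the localisation functor $L$ is additive (Remark~\ref{r:inheritsadditivity}) and, more to the point, admits an explicit description via right fractions, I would show directly that $[f]$ is epic. Concretely, suppose $[g]x_r \circ [f] = 0$ for some morphism $[g]x_r$ of $\A_{\R}$ out of $Y$ (every morphism has this form by the calculus of right fractions supplied by Corollary~\ref{c:admitcalculus}). Using (RF2) to push $f$ past $r$, and the faithfulness of $L$ from Lemma~\ref{l:faithfullocalisation} together with (RF3)-type cancellation, this composite being zero forces a genuine equation in $\A$ of the form $g' f' = 0$ with $f'$ still epic (a pullback of $f$ along a regular morphism, which is epic by Proposition~\ref{p:integralcharacterisation} since $\A$ is integral), whence $g' = 0$ and so $[g]x_r = 0$. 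Thus $[f]$ is an epimorphism.

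For the converse, suppose $[f] \colon X \to Y$ is an epimorphism in $\A_{\R}$ but $f$ is not an epimorphism in $\A$. Then there is a nonzero morphism $p \colon Y \to N$ in $\A$ with $pf = 0$. The key point is that $L$ is faithful (Lemma~\ref{l:faithfullocalisation}), so $[p] \neq 0$; and $[p][f] = [pf] = [0] = 0$. Since $[f]$ is assumed epic, this yields $[p] = 0$, contradicting faithfulness. Hence $f$ is an epimorphism in $\A$. This direction is in fact the cleaner of the two, relying only on faithfulness and functoriality.

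The main obstacle is the first direction: the explicit bookkeeping with right fractions needed to turn the vanishing of a composite $[g]x_r \circ [f]$ into an honest equation in $\A$. The relevant mechanics are exactly those recorded in the description of composition of right fractions and in the proof of Lemma~\ref{l:faithfullocalisation}, so the argument is not deep, but one must be careful to invoke integrality at the right moment — namely, that the morphism obtained from (RF2) by pulling $f$ back along a regular morphism is again epic, which is precisely the content of Proposition~\ref{p:integralcharacterisation}(b) (or directly Proposition~\ref{p:tobeintegral}). I expect the write-up to take the form: reduce an arbitrary morphism out of $Y$ in $\A_{\R}$ to a right fraction, apply (RF2), use faithfulness to descend to $\A$, invoke integrality to keep epimorphy, conclude. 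The monomorphism statement then follows by the evident dualisation, using left fractions and the dual of Proposition~\ref{p:integralcharacterisation}.
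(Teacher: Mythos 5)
Your proposal is correct; the direction ``$[f]$ epi $\Rightarrow$ $f$ epi'' is exactly the paper's argument (faithfulness of the localisation functor, Lemma~\ref{l:faithfullocalisation}), but your harder direction takes a genuinely different route. The paper writes an arbitrary morphism out of $Y$ as a \emph{left} fraction $x_s[g]$, so that the composite with $[f]$ is immediately $x_s[gf]$; then $[gf]=0$, faithfulness gives $gf=0$, and $f$ epi gives $g=0$ --- no completion square and no explicit appeal to integrality beyond the existence of the two calculi. You instead keep the \emph{right}-fraction form $[g]x_r$, which forces an application of (RF2): you pull $r$ back along $f$, rewrite $x_r[f]=[f']x_{r'}$, and then need the extra fact that the pullback leg $f'$ parallel to $f$ is again an epimorphism. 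That fact is true, but note it is literally the definition of left integrality (epimorphisms are stable under pullback), not the content of Proposition~\ref{p:integralcharacterisation}, which you should instead cite (together with Lemma~\ref{l:pullbackofmono}) for the regularity of the other leg $r'$ so that $x_{r'}$ makes sense; your reference to Proposition~\ref{p:tobeintegral} is likewise only relevant as the source of integrality of $\CX$, not of the pullback statement itself. With that citation tidied, your argument goes through: $[gf']x_{r'}=0$ gives $[gf']=0$ by invertibility of $x_{r'}$, faithfulness gives $gf'=0$, and $f'$ epi gives $g=0$. The trade-off is that the paper's choice of fraction orientation makes the bookkeeping disappear, while your version makes visible exactly where integrality (stability of epimorphisms under pullback) is used; both dualise in the same way for the monomorphism statement.
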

\begin{proof}
Suppose first that $[f]$ is an epimorphism and $g$ is a morphism in $\A$ for which
$gf=0$. Then $[g][f]=[gf]=0$. Hence $[g]=0$ since $[f]$ is an epimorphism.
By Lemma~\ref{l:faithfullocalisation}, $g=0$, so $f$ is an epimorphism. Conversely,
suppose that $f$ is an epimorphism and that $(x_r[g])[f]=0$ for a morphism $g$ in
$\A$ and a regular morphism $r$ in $\A$. Then $[g][f]=0$, so $[gf]=0$, so by Lemma~\ref{l:faithfullocalisation}, $gf=0$. Hence $g=0$, so $[g]=0$ and therefore
$x_r[g]=0$ as required. So $[f]$ is an epimorphism.
The monomorphism case is proved similarly.
The result is proved.
\end{proof}

\begin{lemma} \label{l:cokercharacterisation}
Let $\A$ be an integral category and let $\R$ be the class of regular
morphisms in $\A$. Let $f,r$ be morphisms in $\A$, with $r$ regular, and let $c$
be a cokernel of $f$ in $\A$. Then $[c]$ is a cokernel of $[f]x_r$ in
$\A_{\R}$. Similarly, if $j$ is a kernel of $f$ in $\A$ then $[j]$ is a kernel
of $x_r[f]$ in $\A_{\R}$.
\end{lemma}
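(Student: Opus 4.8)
The plan is to prove the cokernel statement for $[c]$ and remark that the kernel statement is dual. First I would show that $[c][f]x_r = 0$, which is immediate: since $c$ is a cokernel of $f$ in $\A$, we have $cf = 0$, so $[c][f] = [cf] = 0$, and postcomposing with $x_r$ (or rather, noting $[c]([f]x_r) = ([c][f])x_r = 0$) gives the vanishing. So $[c]$ factors through any map that $[f]x_r$ is required to be annihilated against; what remains is the universal property.

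Next I would take an arbitrary morphism $\phi \colon B \to W$ in $\A_{\R}$ (where $c \colon Y \to B$, say, so $f \colon X \to Y$) with $\phi([f]x_r) = 0$ and produce a unique factorisation through $[c]$. Using that $\R$ admits a calculus of right fractions (Corollary~\ref{c:admitcalculus}, via integrality), write $\phi = [g]x_s$ for some morphism $g$ in $\A$ and some regular $s$. The condition $\phi [f] x_r = 0$ together with faithfulness of the localisation functor (Lemma~\ref{l:faithfullocalisation}) should be massaged — after clearing the formal inverses $x_r$ and $x_s$ using (RF2)/(RF3) — into an honest equation in $\A$ saying that some representative of $g$ kills a representative of $f$ up to precomposition by a regular map. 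Concretely: from $[g]x_s[f]x_r = 0$ one gets $[g][f'] = 0$ in $\A_{\R}$ for a suitable $f'$ obtained by a fraction manipulation, hence $gf' = 0$ in $\A$ by Lemma~\ref{l:faithfullocalisation}; and $f'$ will differ from $f$ by composition with a regular morphism, so $gf' = 0$ forces (since $c$ is the cokernel of $f$, and precomposing a cokernel relation by an epimorphism is harmless) a factorisation $g = g'c$ in $\A$ for a unique $g'$. Then $\phi = [g]x_s = [g'][c]x_s = [g']x_s \cdot [c]$ up to checking that $x_s$ commutes appropriately — more carefully, $\phi = [g'c]x_s$ and one rewrites this as $([g']x_s)[c]$ using that $x_s$ is central enough in the relevant diagram, giving the desired factorisation through $[c]$.

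Finally I would check uniqueness of the factorisation: if $[c]$ is the cokernel candidate and $\psi_1[c] = \psi_2[c]$, then $(\psi_1 - \psi_2)[c] = 0$, and since $c$ is an epimorphism in $\A$ it is an epimorphism in $\A_{\R}$ by Lemma~\ref{l:epicharacterisation}, so $\psi_1 = \psi_2$. Combined with existence, this shows $[c] = \coker([f]x_r)$. The dual argument with $x_r[f]$ and the kernel $j$ (using that kernels pull back to kernels, $j$ a monomorphism hence $[j]$ a monomorphism by Lemma~\ref{l:epicharacterisation}, and the calculus of \emph{left} fractions) gives the second claim.

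The main obstacle will be the bookkeeping in the middle step: converting the vanishing condition $\phi([f]x_r) = 0$ in $\A_{\R}$, where $\phi$ is itself given as a fraction, into a clean statement $gf' = 0$ in $\A$ with good control over how $f'$ relates to $f$. This requires careful use of (RF2) and (RF3) to bring the two formal inverses $x_r$ and $x_s$ to a common denominator, and then invoking faithfulness at the right moment; one must also be slightly careful that precomposing the cokernel relation by the epimorphism relating $f'$ to $f$ genuinely preserves the universal property of $c$ (it does, because cokernels are unchanged under precomposition by epimorphisms). Everything else is formal manipulation with the explicit description of morphisms in $\A_{\R}$ recalled earlier in this section.
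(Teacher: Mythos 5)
There is a genuine gap in the middle step, exactly where you predicted the bookkeeping would be delicate. If you write the test morphism as a right fraction $\phi=[g]x_s$ with $s\colon B\to Y$ regular and $g\colon B\to W$, then clearing denominators via (RF2) gives a commuting square $sf''=fs''$ with $s''$ regular, hence $x_s[f]=[f'']x_{s''}$, and the hypothesis $\phi([f]x_r)=0$ yields $gf''=0$ by faithfulness. But $f''$ is \emph{not} of the form $f$ composed with a regular epimorphism: (RF2) only provides the square $sf''=fs''$, and there is no reason $f$ should lift along $s$. Consequently your assertion that ``$f'$ will differ from $f$ by composition with a regular morphism, so $gf'=0$ forces $g=g'c$'' fails; indeed $g=g'c$ is not even type-correct, since $g$ has domain $B$ (the domain of $s$) while $c$ has domain $Y$. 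The subsequent rewriting of $[g'][c]x_s$ as $([g']x_s)[c]$ by letting $x_s$ ``commute'' past $[c]$ is likewise not a legitimate move: these composites involve different objects, and no such commutation holds in $\A_{\R}$.

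The repair is to use the other calculus: since $\A$ is integral, $\R$ admits a calculus of left fractions as well (Corollary~\ref{c:admitcalculus}), so the test morphism can be written as a left fraction $\phi=x_s[g]$ with $g\colon Y\to W'$ and $s\colon W\to W'$ regular. Then both formal inverses sit on the outside and cancel: $\phi([f]x_r)=x_s[gf]x_r$, which vanishes if and only if $[gf]=0$, i.e.\ $gf=0$ in $\A$ by Lemma~\ref{l:faithfullocalisation}. Now $g=g'c$ in $\A$, whence $\phi=x_s[g'][c]$ factors through $[c]$; so $[c]$ is a weak cokernel of $[f]x_r$, and since $c$ is an epimorphism, $[c]$ is one too by Lemma~\ref{l:epicharacterisation}, hence a cokernel by Lemma~\ref{weakandepi}. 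This is exactly the paper's argument (and dually, for the kernel statement one writes the test morphism as a right fraction, again so that the formal inverses cancel on the outside). Your first step ($[c][f]x_r=0$) and your uniqueness-via-epimorphism observation are fine; the only flawed point is the conversion of the vanishing condition into an honest equation in $\A$, which requires the left-fraction representation of $\phi$ rather than the right-fraction one.
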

\begin{proof}
We have $[c][f]x_r=[cf]x_r=[0]x_r=0$.
Suppose that $g,s$ are morphisms in $\A$, with $s$ regular, and $(x_s[g])([f]x_r)=0$.
Then $x_s[gf]x_r=0$, so $[gf]=0$, so $gf=0$ in $\A$. Hence $g$ factors through
$c$, so $[g]$ factors through $[c]$, so $x_s[g]$ factors through $[c]$. Hence $[c]$
is a weak cokernel of $[f]x_r$. Since $c$ is an epimorphism in $\A$, it follows
from Lemma~\ref{l:epicharacterisation} that $[c]$ is an epimorphism in $\A$.
Therefore, by Lemma~\ref{weakandepi}, $[c]$ is a cokernel of $[f]x_r$.
The result for kernels is proved similarly.
\end{proof}

We recall that every morphism $f \colon X \to Y$ in a preabelian category $\A$ has a factorisation of the form:
$$\xymatrix{
X \ar@{>>}[r]^(0.35){u} & \coim(f) \ar[r]^{\tilde{f}} & \im(f) \ar@{^{(}->}[r]^(0.6){v} & Y
}$$
and $\A$ is abelian if and only if $\tilde{f}$ is an isomorphism for all morphisms
$f$ in $\A$.

\begin{lemma} \cite[p167]{r01}  \label{l:semi-abelianregular}
Let $\A$ be a preabelian category. Then $\A$ is semi-abelian if and only if
$\tilde{f}$ is regular for all morphisms $f$ in $\A$.
\end{lemma}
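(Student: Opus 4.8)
We note the following plan. Throughout we use the canonical factorisation $f = v\tilde{f}u$ of a morphism $f \colon X \to Y$ in a preabelian category $\A$, recalled just before the statement, together with the fact that $u = \coim(f) = \coker(\ker f)$ is a cokernel (hence an epimorphism) and $v = \im(f) = \ker(\coker f)$ is a kernel (hence a monomorphism), and the observations of Rump recalled in Section~\ref{s:preabelian} on the shape of the semi-abelian factorisations.

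For the forward direction, assume $\A$ is semi-abelian and let $f \colon X \to Y$ be arbitrary. Since $\A$ is left semi-abelian, $f$ has a factorisation $f = ip$ with $p$ a cokernel and $i$ a monomorphism; by the cited observation the cokernel part $p$ may be identified with $u = \coim(f)$. Comparing with $f = v\tilde{f}u$ and cancelling the epimorphism $u$ gives $i = v\tilde{f}$, and since $i$ is a monomorphism, so is $\tilde{f}$. Dually, right semi-abelianity of $\A$ gives a factorisation $f = jq$ with $q$ an epimorphism and $j$ a kernel, where $j$ may be identified with $v = \im(f)$; comparing with $f = v\tilde{f}u$ and cancelling the monomorphism $v$ gives $q = \tilde{f}u$, an epimorphism, so $\tilde{f}$ is an epimorphism. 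Hence $\tilde{f}$ is regular.

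For the converse, assume $\tilde{f}$ is regular for every morphism $f$ of $\A$, and let $f \colon X \to Y$ be arbitrary. Writing $f = (v\tilde{f})u$, the map $u$ is a cokernel and $v\tilde{f}$ is a composite of the monomorphism $v$ with the monomorphism $\tilde{f}$, hence a monomorphism; so $\A$ is left semi-abelian. Dually, writing $f = v(\tilde{f}u)$, the map $v$ is a kernel and $\tilde{f}u$ is a composite of the two epimorphisms $\tilde{f}$ and $u$, hence an epimorphism; so $\A$ is right semi-abelian. Therefore $\A$ is semi-abelian.

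The argument is essentially formal. The only point needing a little care is making the identifications of the cokernel (respectively, kernel) part of the semi-abelian factorisation with $\coim(f)$ (respectively, $\im(f)$) precise — these hold up to the canonical isomorphism of cokernels (respectively, kernels) — so that the cancellations of the epimorphism $u$ and of the monomorphism $v$ are legitimate; absorbing such an isomorphism into $i$ or $q$ causes no difficulty, and no deeper obstacle is anticipated.
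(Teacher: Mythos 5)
Your proof is correct and follows essentially the same route as the paper's: identify the semi-abelian factorisation with the canonical factorisation $f=v\tilde{f}u$ to get $i=v\tilde{f}$ (resp.\ $q=\tilde{f}u$) for the forward direction, and use $f=(v\tilde{f})u=v(\tilde{f}u)$ for the converse. Your extra remark about the identification of the cokernel/kernel part with $\coim(f)$/$\im(f)$ up to canonical isomorphism is a harmless refinement of the same argument.
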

\begin{proof}
By definition (see Section~\ref{s:preabelian}),
if $\A$ is semi-abelian then every morphism $f$ has a factorisation of the form
$ip$ where $p=\coim(f)$ and $i$ is a monomorphism.
Comparing this with the factorisation above we see that
$i=v\tilde{f}$
and thus, since $i$ is a monomorphism, so is $\tilde{f}$. Dually, we see that $\tilde{f}$
is an epimorphism, and hence regular. Conversely, suppose that for all morphisms
$f$ in $\A$, $\tilde{f}$ is regular. Then, in the factorisation above,
$v\tilde{f}$ must be
a monomorphism as $v$ and $\tilde{f}$ are.
Dually, $\tilde{f}u$ is an epimorphism and we see that
$\A$ is semi-abelian as required.
\end{proof}

According to~\cite{r01}, we have the following theorem. Again we give
details for the interested reader.

\begin{theorem} \cite[p173]{r01} \label{t:abelianlocalisation}
Let $\A$ be an integral category. Then the localisation
$\A_{\R}$ (if it exists) of $\A$ at the class of regular morphisms
is an abelian category.
\end{theorem}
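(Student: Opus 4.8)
The plan is to assemble the results already proved. Since $\A$ is integral, Proposition~\ref{p:integralimpliessemi-abelian} shows that $\A$ is semi-abelian, and then Proposition~\ref{p:integralfractioncharacterisation} shows that $\R$ admits a calculus of left and of right fractions; assuming $\A_{\R}$ exists, Remark~\ref{r:inheritsadditivity} gives that $\A_{\R}$ is additive with additive localisation functor $L$, and Lemma~\ref{l:faithfullocalisation} gives that $L$ is faithful. The first step is to observe that $\A_{\R}$ is preabelian. Indeed, an arbitrary morphism of $\A_{\R}$ can be written as a right fraction $[f]x_r$ and also as a left fraction $x_s[g]$, with $f,g$ morphisms of $\A$ and $r,s\in\R$; since $x_r$ and $x_s$ are isomorphisms in $\A_{\R}$, Lemma~\ref{l:cokercharacterisation} furnishes a cokernel of $[f]x_r$ (obtained from a cokernel of $f$ in $\A$) and a kernel of $x_s[g]$ (obtained from a kernel of $g$ in $\A$). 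Hence every morphism of $\A_{\R}$ has a kernel and a cokernel. Recalling that a preabelian category is abelian precisely when the canonical comparison morphism $\tilde\phi\colon\coim(\phi)\to\im(\phi)$ is an isomorphism for every morphism $\phi$, it remains to verify this condition in $\A_{\R}$.

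The heart of the argument is to identify $\tilde\phi$ when $\phi$ lies in the image of $L$. Given $f\colon X\to Y$ in $\A$, write its coimage--image factorisation as $f=v\tilde f u$ with $u=\coker(\ker f)\colon X\to\coim(f)$ and $v=\ker(\coker f)\colon\im(f)\to Y$. Applying Lemma~\ref{l:cokercharacterisation} with the regular morphism taken to be the identity, I would check: $[\ker f]$ is a kernel of $[f]$, so $[u]$, being a cokernel of $[\ker f]$, is $\coim([f])$; dually, $[\coker f]$ is a cokernel of $[f]$, so $[v]$, being a kernel of $[\coker f]$, is $\im([f])$. Applying $L$ to $f=v\tilde f u$ then exhibits $[v]\,[\tilde f]\,[u]$ as the coimage--image factorisation of $[f]$ in $\A_{\R}$, and by uniqueness of that factorisation we conclude $\widetilde{[f]}=[\tilde f]$. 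Since $\A$ is semi-abelian, Lemma~\ref{l:semi-abelianregular} gives that $\tilde f$ is regular, i.e.\ $\tilde f\in\R$, so $[\tilde f]$ is an isomorphism in $\A_{\R}$ (its inverse is $x_{\tilde f}$). Hence $\widetilde{[f]}$ is an isomorphism for every $f$ in $\A$.

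Finally, a general morphism of $\A_{\R}$ has the form $\phi=[f]x_r$ with $x_r$ an isomorphism; since precomposing with an isomorphism in a preabelian category changes neither the image nor (up to the induced identification of objects) the coimage of a morphism, the comparison map $\tilde\phi$ is identified with $\widetilde{[f]}$ and is therefore an isomorphism. So every comparison morphism in $\A_{\R}$ is an isomorphism, and $\A_{\R}$ is abelian.

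I expect the main difficulty to be the bookkeeping in the second paragraph: one has to apply Lemma~\ref{l:cokercharacterisation} correctly in each of several places, keeping careful track of which morphisms play the roles of $f$, $r$, $c$ and $j$ there, and then argue that the various images under $L$ genuinely assemble into the coimage--image factorisation of $[f]$, so that the uniqueness of such factorisations in the preabelian category $\A_{\R}$ can legitimately be invoked to pin down $\widetilde{[f]}$. By comparison, the reduction of the general right fraction $[f]x_r$ to the case $r=\mathrm{id}$ is routine, as is the initial verification that $\A_{\R}$ is preabelian.
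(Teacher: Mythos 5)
Your proposal is correct and follows essentially the same route as the paper's proof: preabelianness of $\A_{\R}$ via Lemma~\ref{l:cokercharacterisation}, identification of the comparison morphism $\widetilde{[f]}$ with $[\tilde f]$ (the step the paper labels ``easy to check'', which you spell out), regularity of $\tilde f$ via Lemma~\ref{l:semi-abelianregular}, and reduction of a general morphism $[f]x_r$ to the case of $[f]$ by composing with an isomorphism. No gaps; your extra bookkeeping in the second paragraph is exactly the verification the paper leaves implicit.
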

\begin{proof}
As we have already observed, by~\cite[3.3, Cor.\ 2]{gz}, $\A_{\R}$ is an additive category.
By Lemma~\ref{l:cokercharacterisation}, $\A_{\R}$ is preabelian.
Since $\A$ is semi-abelian (by Proposition~\ref{p:integralimpliessemi-abelian}),
it follows from Lemma~\ref{l:semi-abelianregular} that in the factorisation:
$$\xymatrix{
X \ar@{>>}[r]^(0.35){u} & \coim(f) \ar[r]^{\tilde{f}} & \im(f) \ar@{^{(}->}[r]^(0.6){v} & Y
}$$
of any morphism $f$ in $\A$, $\tilde{f}$ is regular. It is easy to check that
applying the localisation functor to this factorisation gives the corresponding factorisation
of $[f]$.
It follows that for morphisms of the form $\alpha=[f]$ in $\A_{\R}$, $\tilde{\alpha}$
is invertible. Since every morphism in $\A_{\R}$ can be obtained by composing a morphism of
this form with an invertible morphism in $\A_{\R}$, it follows that $\tilde{u}$ is invertible
for all morphisms $u$ in $\A_{\R}$ and hence that $\A_{\R}$ is abelian as required.
\end{proof}

\section{The localisation of $\CX$ is equivalent to $\module \Gamma$.}
\label{s:mainresult}

In this section we will show that $\CXR$ is isomorphic to $\module
\Gamma$, where as before $\Gamma = \End_{\C}(T)^{op}$.

We have seen (Corollary~\ref{c:CXintegral})
that $\CX$ is an integral category. Since we assume $\C$ is skeletally
small, $\CX$ is also skeletally small.
Applying Theorem~\ref{t:abelianlocalisation} to the
integral category $\CX$ we see that $\CXR$ is abelian:

\begin{theorem} \label{t:mainresult1}
Let $\C$ be a skeletally small, Hom-finite, Krull-Schmidt triangulated category with Serre 
duality, containing a rigid object $T$. Let $\X_T$ denote the class of objects $X$ in $\C$ such 
that $\Hom_{\C}(T,X)=0$. Then the class $\R$ of regular morphisms in $\CX$ admits a calculus
of left fractions and a calculus of right fractions.
Furthermore, the localisation $\CXR$
of $\CX$ at the class $\R$ is abelian.
\end{theorem}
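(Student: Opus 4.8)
The plan is to assemble Theorem~\ref{t:mainresult1} almost entirely from results already established in the excerpt, so the ``proof'' is really a matter of checking hypotheses and invoking the right lemmas in the right order. First I would recall from Corollary~\ref{c:CXintegral} that $\CX$ is an integral category; the input to that corollary is exactly the collection of facts proved in Section~\ref{s:cxproperties}, namely that $\CX$ is preabelian (Theorem~\ref{preab}), has enough projectives (Lemma~\ref{l:enoughprojectives}, using that objects of $\add T$ are projective by Lemma~\ref{l:addTprojective}) and enough injectives (Lemma~\ref{l:enoughinjectives}), so that Proposition~\ref{p:tobeintegral}(3) applies. Since we are assuming throughout that $\C$ is skeletally small, $\CX$ is skeletally small as well (it has the same objects as $\C$ and Hom-sets that are quotients of those of $\C$), so the Gabriel--Zisman localisation $\CXR$ exists.

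Next I would feed this into the localisation machinery of Section~\ref{s:localisation}. Because $\CX$ is integral, it is in particular semi-abelian by Proposition~\ref{p:integralimpliessemi-abelian}, so Proposition~\ref{p:integralfractioncharacterisation} applies and tells us directly that the class $\R$ of regular morphisms in $\CX$ admits both a calculus of left fractions and a calculus of right fractions; this gives the first assertion of the theorem. For the second assertion, I would simply apply Theorem~\ref{t:abelianlocalisation} to the integral category $\A = \CX$: its conclusion is precisely that the localisation $\A_{\R}$, which exists here by skeletal smallness, is abelian. Hence $\CXR$ is abelian, completing the proof.

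There is essentially no obstacle here beyond bookkeeping, since all the substantive work — the preabelianness argument generalising Koenig--Zhu, the projective/injective analysis, the integrality criterion via enough projectives and injectives, and Rump's theorem that the regular localisation of an integral category is abelian — has already been carried out. The only point requiring a moment's care is confirming that skeletal smallness passes from $\C$ to $\CX$ and therefore guarantees existence of the localisation, and that the class $\R$ in the statement is the same class of regular morphisms to which Proposition~\ref{p:integralfractioncharacterisation} and Theorem~\ref{t:abelianlocalisation} refer; both are immediate. So the proof reduces to: invoke Corollary~\ref{c:CXintegral}, then Proposition~\ref{p:integralfractioncharacterisation} for the fractions statement, then Theorem~\ref{t:abelianlocalisation} for abelianness.
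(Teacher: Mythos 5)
Your proposal is correct and follows the paper's own argument: the paper likewise deduces Theorem~\ref{t:mainresult1} by combining Corollary~\ref{c:CXintegral} (integrality of $\CX$), the observation that skeletal smallness passes from $\C$ to $\CX$ so the localisation exists, Proposition~\ref{p:integralfractioncharacterisation} for the calculus of left and right fractions, and Theorem~\ref{t:abelianlocalisation} for abelianness of $\CXR$. No gaps; the bookkeeping points you flag (same class $\R$, existence of the localisation) are exactly the ones the paper checks.
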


\begin{remark} \label{r:inheritskadditivity}
We have seen that the localisation $\CXR$ inherits an additive structure from $\CX$
(see Remark~\ref{r:inheritsadditivity}).
The localisation $\CXR$ also inherits a $k$-additive structure from $\CX$:
a scalar $\lambda$ takes the fraction $[r,f]_{\text{RF}}$ to $[r,\lambda f]_{\text{RF}}$.
It can be checked that this action is well defined and, together with the additive
structure inherited from $\CX$, gives a $k$-additive structure on $\CXR$ for which the
localisation functor is $k$-additive.
\end{remark}

We will show that the projectives in $\CXR$ are the objects in $\add T$ and
that $\CXR$ has enough projectives. From this it will follow that
$\CXR$ is equivalent to $\module \Gamma$.

\begin{lemma} \label{l:regularisomorphism}
Let $\A$ be an additive category and $P$ a projective object in $\A$.
If $r\colon U\to P$ is a regular morphism then it is an isomorphism.
\end{lemma}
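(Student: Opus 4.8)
The plan is to exploit the defining lifting property of the projective object $P$ against the regular morphism $r$. Since $r \colon U \to P$ is in particular an epimorphism, and $P$ is projective, the identity morphism $\mathrm{id}_P \colon P \to P$ must factor through $r$: there is a morphism $s \colon P \to U$ with $rs = \mathrm{id}_P$. Thus $r$ is a split epimorphism, and $s$ is a section of it.

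The remaining point is to upgrade this to a two-sided inverse using the fact that $r$ is also a monomorphism. From $rs = \mathrm{id}_P$ we get $r(sr) = (rs)r = r = r\,\mathrm{id}_U$, and since $r$ is a monomorphism we may cancel it on the left to conclude $sr = \mathrm{id}_U$. Hence $s$ is a two-sided inverse of $r$, so $r$ is an isomorphism.

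I do not expect any genuine obstacle here: the argument is a short diagram chase using only that a regular morphism is simultaneously epi and mono, together with the definition of projective used in the paper (lifting against \emph{all} epimorphisms, not merely cokernels). The one place to be a little careful is that we are using the strong notion of projective — so this lemma would fail for Rump's quasi-projectives — but that is exactly the notion fixed earlier in the excerpt, so there is nothing to check beyond quoting the definition.
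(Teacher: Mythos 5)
Your proof is correct and is essentially identical to the paper's: lift the identity of $P$ through the epimorphism $r$ using projectivity to get a section $s$, then use that $r$ is a monomorphism to cancel and conclude $sr=\mathrm{id}_U$. Your closing remark about needing the strong notion of projective (not Rump's quasi-projectives) is accurate and matches the convention the paper fixes.
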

\begin{proof}
Since $r$ is an epimorphism, the identity map on $P$ factors through $r$, so there
is a morphism $s\colon P\rightarrow U$ such that $rs=id$.
Then $r(sr-id)=(rs)r-r=0$. Since $r$ is a monomorphism, $sr-id=0$ and it follows
that $r$ is an isomorphism.
\end{proof}

\begin{lemma} \label{l:epiprojectivepreserved}
Let $\A$ be a skeletally small integral category and $\R$ the class of regular
morphisms in $\A$. Suppose that $P$ is a projective object in $\A$. Then $P$, when regarded
as an object in the localisation $\A_{\R}$, is again a projective object.
\end{lemma}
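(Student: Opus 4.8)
The plan is to take an epimorphism $\beta \colon B \to C$ in $\A_{\R}$ and a morphism $\alpha \colon P \to C$ in $\A_{\R}$, and produce a lift of $\alpha$ through $\beta$. First I would reduce to the case where the two morphisms come from $\A$. Using the calculus of right fractions supplied by Corollary~\ref{c:admitcalculus}, write $\alpha = [f]x_r$ for some $f$ in $\A$ and $r$ regular; since $x_r$ is invertible in $\A_{\R}$, lifting $\alpha$ through $\beta$ is equivalent to lifting $[f]$ through $\beta$, so we may assume $\alpha = [f]$ with $f \colon P \to C$ in $\A$. Similarly write $\beta = [g]x_s$ with $g \colon B' \to C$ in $\A$ and $s \colon B' \to B$ regular; since $[g] = \beta [s]$ is a composite of the epimorphism $\beta$ with $[s]$, and $[s]$ is an epimorphism by Lemma~\ref{l:epicharacterisation} (as $s$ is regular, hence an epimorphism in $\A$), the morphism $[g]$ is an epimorphism in $\A_{\R}$; a lift of $[f]$ through $[g]$ then composes with $x_s$ (applied on the correct side) to give a lift through $\beta$. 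So it suffices to lift $[f]$ through $[g]$ where $f,g$ are morphisms in $\A$ and $[g]$ is an epimorphism in $\A_{\R}$.

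Next, by Lemma~\ref{l:epicharacterisation}, $[g]$ being an epimorphism in $\A_{\R}$ forces $g$ to be an epimorphism in $\A$. Now I would take a cokernel $c \colon C \to C'$ of $g$ in $\A$ (which exists since $\A$ is preabelian); since $g$ is an epimorphism, $c = 0$, but more usefully the point is simply that $g$ being epi in $\A$ does not by itself give a set-theoretic lift of $f$ through $g$ in $\A$, so I must work in $\A_{\R}$. Instead, factor $g = v\tilde{g}u$ through its image as in Lemma~\ref{l:semi-abelianregular}: here $u = \coim(g)$ is a cokernel (hence an epimorphism), $\tilde{g}$ is regular (as $\A$ is semi-abelian), and $v = \im(g)$ is a kernel; since $g$ is an epimorphism, $v$ is an epimorphism and a kernel, so $v$ is a cokernel too — in fact $v$ is regular. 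Thus in $\A_{\R}$ the morphism $[g] = [v][\tilde{g}][u]$ with $[\tilde{g}]$ and $[v]$ invertible (regular morphisms become invertible in the localisation), so lifting $[f]$ through $[g]$ reduces to lifting $[f]$ through $[u]$ where $u \colon P_0 \to C_0$ is a cokernel in $\A$, say a cokernel of some $e \colon K \to P_0$.

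For this final reduction I would use that $P$ is projective in $\A$ together with Rump's characterisation of cokernels in the localisation. We have $[f] \colon P \to C_0$; I want to lift it along $[u] \colon P_0 \to C_0$. Here is where projectivity of $P$ in $\A$ enters: since $u$ is a cokernel in $\A$, $\A$ has enough projectives (Lemma~\ref{l:enoughprojectives}) — wait, that is only in the $\CX$ case; in general I should argue directly. Since $u \colon P_0 \to C_0$ is an epimorphism in $\A$ and $P$ is projective in $\A$, the morphism $f \colon P \to C_0$ lifts to $h \colon P \to P_0$ in $\A$ with $uh = f$; then $[u][h] = [f]$ in $\A_{\R}$, giving the lift. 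Tracing back through the two reductions (composing $[h]$ with the appropriate invertible morphisms $[v]^{-1} = x_v$, $[\tilde g]^{-1} = x_{\tilde g}$ and $x_s$), we obtain a morphism $P \to B$ in $\A_{\R}$ lifting $\alpha$ along $\beta$. Hence $P$ is projective in $\A_{\R}$. The main obstacle is the bookkeeping in the reductions: one must be careful that "epimorphism in $\A_{\R}$" is the right hypothesis to propagate (handled by Lemma~\ref{l:epicharacterisation}) and that the factorisation $g = v\tilde g u$ really does become, after localisation, the composite of an invertible morphism with a cokernel-in-$\A$, so that projectivity of $P$ in the \emph{unlocalised} category $\A$ is enough to finish.
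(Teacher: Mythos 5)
Your overall strategy (represent morphisms in $\A_{\R}$ by fractions, use Lemma~\ref{l:epicharacterisation} to pull ``epimorphism'' back to $\A$, and finish with projectivity of $P$ in $\A$) is the same as the paper's, but there is a genuine gap at your very first reduction. A right fraction representing $\alpha \colon P \to C$ has the form $P \xleftarrow{\ r\ } A \xrightarrow{\ f\ } C$ with $r \colon A \to P$ regular, so after cancelling $x_r$ the lifting problem that remains concerns $[f]$ whose \emph{domain is $A$, not $P$}; your phrase ``we may assume $\alpha=[f]$ with $f\colon P\to C$'' silently identifies $A$ with $P$. This identification is exactly what your final step needs, since there you apply projectivity of the domain in $\A$ to lift $f$; but it requires an argument. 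This is precisely where the paper invokes Lemma~\ref{l:regularisomorphism}: a regular morphism into a projective object splits and is therefore an isomorphism, so the numerator object $A$ is isomorphic to $P$ and hence projective in $\A$. You never invoke this lemma or any substitute (for instance, writing $\alpha$ as a \emph{left} fraction $x_s[g]$ so that the numerator keeps domain $P$), so as written the proof does not go through.

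Secondly, the middle of your argument is both unnecessary and not quite coherent. With the definition of projective used in the paper (Mac Lane's: liftings exist through arbitrary epimorphisms, not only through cokernels), once Lemma~\ref{l:epicharacterisation} gives that $g$ is an epimorphism in $\A$, a morphism from a projective object of $\A$ lifts through $g$ directly in $\A$; your assertion that ``$g$ being epi in $\A$ does not by itself give a lift'' contradicts the definition you yourself use a few lines later for $u$. Moreover, the coimage--image detour does not typecheck as stated: $[f]$ has codomain $C$ while $u$ has codomain $\coim(g)$, so the morphism you would actually have to lift through $[u]$ is $[\tilde g]^{-1}[v]^{-1}[f]$, which is not a priori of the form $[f']$ with $f'$ a morphism of $\A$ --- reintroducing the very problem the reduction was meant to solve. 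Deleting the detour, lifting directly through $g$, and repairing the first reduction via Lemma~\ref{l:regularisomorphism} turns your argument into essentially the paper's proof.
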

\begin{proof}
Suppose $P$ is projective in $\A$ and we have a diagram:
$$\xymatrix{
& P \ar[d]_{[p]x_s} \\
X \ar[r]_{[f]x_r} & Y
}$$
in $\A_{\R}$ with $p,f,r,s$ morphisms in $\A$, $s \colon U \to P$ and $r$
regular morphisms and
$[f]x_r$ an epimorphism in $\A_{\R}$.
Since $s$ is regular and $P$ is projective in $\A$, $s$ is an isomorphism by
Lemma~\ref{l:regularisomorphism}. Hence $P\simeq U$ in $\A$, so $U$ is also
projective in $\A$.
By Lemma~\ref{l:epicharacterisation}, $f$ is an epimorphism in $\A$,
so $p$ factors through $f$. It follows that
$[p]x_s$ factors through $[f]x_r$ as required.
\end{proof}

\begin{lemma} \label{l:localisationenoughprojectives}
\begin{enumerate}
\item[(a)]
The projectives in $\CXR$ are exactly the objects in $\add T$.
\item[(b)]
The category $\CXR$ has enough projectives.
\end{enumerate}
\end{lemma}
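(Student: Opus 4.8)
The plan is to transfer the projectivity results already established for $\CX$ across the localisation functor $L\colon \CX \to \CXR$, using the machinery of Section~\ref{s:localisation}. For part (a), one inclusion is immediate: by Lemma~\ref{l:addTprojective} every object of $\add T$ is projective in $\CX$, and by Lemma~\ref{l:epiprojectivepreserved} (applied to the integral category $\CX$, which is skeletally small since $\C$ is) such an object remains projective when regarded in $\CXR$. For the reverse inclusion, suppose $Q$ is projective in $\CXR$. By Lemma~\ref{l:enoughprojectives}, $\CX$ has enough projectives, so there is an epimorphism $\underline{f}\colon T_0 \to Q$ in $\CX$ with $T_0 \in \add T$; indeed one can take $\underline{f}$ to be (the image of) a minimal right $\add T$-approximation as in the proof of that lemma. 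By Lemma~\ref{l:epicharacterisation}, $[\underline{f}]\colon T_0 \to Q$ is an epimorphism in $\CXR$. Since $Q$ is projective in $\CXR$, the identity on $Q$ factors through $[\underline{f}]$, so $[\underline{f}]$ is a split epimorphism; hence $Q$ is a direct summand in $\CXR$ of $T_0 \in \add T$. It remains to observe that $\add T$, viewed inside $\CXR$, is closed under direct summands, so that $Q$ itself lies in $\add T$.

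That last closure point is the step I expect to require the most care, so I would address it head-on. The localisation functor $L$ is faithful (Lemma~\ref{l:faithfullocalisation}) and $k$-additive (Remark~\ref{r:inheritskadditivity}), but faithfulness alone does not force idempotents on $\add T$ in $\CXR$ to be split \emph{inside} $\add T$. However, $\End_{\CX}(T)$ equals $\Gamma^{\op}\!{}^{\op}=\End_{\C}(T)$ modulo nothing new --- more precisely, since $T$ has no nonzero summands in $\X_T$, the natural map $\End_{\C}(T)\to \End_{\CX}(T)$ is an isomorphism, and one checks similarly that $\End_{\CXR}(T_0)\cong \End_{\CX}(T_0)$ for $T_0\in\add T$, because a regular morphism between projectives is an isomorphism (Lemma~\ref{l:regularisomorphism}) so that right fractions $[r,f]_{\mathrm{RF}}$ with source and target in $\add T$ reduce to honest morphisms $f r^{-1}$ of $\CX$. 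Thus $L$ restricts to a full and faithful functor $\add T \to \CXR$, and since $\C$ is Krull--Schmidt the category $\add T$ is idempotent-complete; a direct summand in $\CXR$ of an object of $\add T$ is therefore (via this full embedding) isomorphic to an object of $\add T$. This completes (a).

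For part (b), let $X$ be any object of $\CXR$; it is also an object of $\CX$. By Lemma~\ref{l:enoughprojectives} there is an epimorphism $\underline{f}\colon T_0 \to X$ in $\CX$ with $T_0\in\add T$, and by Lemma~\ref{l:epicharacterisation} its image $[\underline{f}]\colon T_0 \to X$ is an epimorphism in $\CXR$. By part (a), $T_0$ is projective in $\CXR$. Hence every object of $\CXR$ receives an epimorphism from a projective, i.e.\ $\CXR$ has enough projectives, as required.
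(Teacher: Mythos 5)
Your proposal is correct and follows the same route as the paper: $\add T$ gives projectives in $\CXR$ via Lemmas~\ref{l:addTprojective} and~\ref{l:epiprojectivepreserved}, enough projectives comes from Lemma~\ref{l:enoughprojectives} together with Lemma~\ref{l:epicharacterisation}, and the converse uses the split-epimorphism argument. The only difference is that you explicitly justify the final step --- that a summand in $\CXR$ of an object of $\add T$ again lies in $\add T$, via fullness and faithfulness of the localisation on $\add T$ (regular maps into projectives being isomorphisms) and idempotent completeness from the Krull--Schmidt property --- a point the paper's proof states without elaboration; your justification is valid.
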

\begin{proof}
By Lemmas~\ref{l:epiprojectivepreserved} and~\ref{l:addTprojective}, the
objects in $\add T$ are projective in $\CXR$.
Let $X$ be an object in $\CX$. Then, by Lemma~\ref{l:enoughprojectives},
there is an epimorphism $\underline{p} \colon T_0\to X$ in $\CX$, where $T_0$
lies in $\add T$, and (b) follows, using Lemma~\ref{l:epicharacterisation}.
If $X$ is projective, the identity map on $X$
factors through $[\,\underline{p}\,]$, so $[\,\underline{p}\,]$ is a
split epimorphism and $X$ is isomorphic to a summand of $T_0$,
hence in $\add T$, and (a) follows.
\end{proof}

\begin{lemma} \label{l:endomorphismalgebraiso}
We have that $\End_{\CXR}(T)\simeq \End_{\C}(T)$.
\end{lemma}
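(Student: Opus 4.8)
The plan is to show that the localisation functor induces an isomorphism on the endomorphism ring of $T$. Recall from the introduction (and the discussion just before Lemma~\ref{l:localisationenoughprojectives}) that $\R = \underline{\S}$, where $\S$ is the class of maps inverted by $\Hom_\C(T,-)$. The key structural fact is that for $T$ itself, every regular morphism into $T$ (in $\CX$) is already an isomorphism, by Lemma~\ref{l:regularisomorphism} together with Lemma~\ref{l:addTprojective}, since $T$ is projective in $\CX$. This should force the localisation to not actually add any new morphisms or identifications at $T$.

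First I would recall that $\CX$ is integral (Corollary~\ref{c:CXintegral}), so by Lemma~\ref{l:faithfullocalisation} the localisation functor $L\colon \CX \to \CXR$ is faithful; in particular the induced ring homomorphism $\End_{\CX}(T) \to \End_{\CXR}(T)$, $\underline{f} \mapsto [\,\underline f\,]$, is injective. For surjectivity, take an arbitrary endomorphism of $T$ in $\CXR$. Using the calculus of right fractions (Theorem~\ref{t:mainresult1}), it has the form $[\,\underline f\,]x_r$ for some $r\colon U \to T$ in $\R$ and $\underline f \colon U \to T$ in $\CX$. Since $T$ is projective in $\CX$ and $r$ is regular, Lemma~\ref{l:regularisomorphism} shows $r$ is an isomorphism in $\CX$; hence $x_r = [\,r^{-1}\,]$ is the image of a morphism of $\CX$, and therefore $[\,\underline f\,]x_r = [\,\underline f \circ r^{-1}\,]$ lies in the image of $L$. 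Thus $L$ restricts to a ring isomorphism $\End_{\CX}(T) \xrightarrow{\ \sim\ } \End_{\CXR}(T)$.

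It then remains to identify $\End_{\CX}(T)$ with $\End_\C(T)$. This holds because $T$ is rigid: a morphism $T \to T$ in $\C$ factoring through an object $X$ of $\X_T$ must vanish, since $\Hom_\C(T,X) = 0$ forces the first leg $T \to X$ to be zero. Hence $\Hom_{\CX}(T,T) = \Hom_\C(T,T)$ (the quotient is by morphisms factoring through $\X_T$, and there are none that are nonzero), and the composition laws agree. Chaining the two isomorphisms gives $\End_{\CXR}(T) \simeq \End_\C(T)$, as claimed.

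**The main obstacle** I anticipate is purely bookkeeping: one must be careful that the ring structure (not merely the additive group structure) is respected at each stage — i.e.\ that $L$ is a functor so composition is preserved, that the fraction $[\,\underline f\,]x_r$ with $r$ an isomorphism genuinely equals $[\,\underline f \circ r^{-1}\,]$ in the localisation (this uses the explicit description of composition of fractions and the fact that $x_r$ is by construction the two-sided inverse of $[\,r\,]$), and that identifying $\Hom_{\CX}(T,T)$ with $\Hom_\C(T,T)$ is compatible with composition. None of these is deep; the one genuinely substantive input is the projectivity of $T$ in $\CX$ (Lemma~\ref{l:addTprojective}) feeding into Lemma~\ref{l:regularisomorphism}, which collapses the fractions at $T$.
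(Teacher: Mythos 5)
Your proof is correct and follows essentially the same route as the paper's: injectivity via faithfulness of the localisation functor (Lemma~\ref{l:faithfullocalisation}), surjectivity by collapsing an arbitrary fraction $[\,\underline f\,]x_{\underline r}$ using Lemma~\ref{l:regularisomorphism} (with the projectivity of $T$ in $\CX$ from Lemma~\ref{l:addTprojective}), and then $\End_{\CX}(T)\simeq\End_{\C}(T)$ because $\Hom_{\C}(T,X)=0$ for all $X\in\X_T$. One tiny quibble: that last identification holds by the very definition of $\X_T$, not because $T$ is rigid, but the reason you actually invoke ($\Hom_{\C}(T,X)=0$ kills the first leg of any factorisation) is exactly the right one.
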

\begin{proof}
The localisation functor induces a morphism
$$\varphi \colon\End_{\CX}(T)\to \End_{\CXR}(T).$$
If $[\underline{f}]x_{\underline{r}}$ is an arbitrary element of $\End_{\CXR}(T)$ with
$\underline{f},\underline{r}$ morphisms in $\CX$ and $\underline{r}$ regular,
then $\underline{r}$ is an isomorphism in $\CX$ by Lemma~\ref{l:regularisomorphism}, so $x_{\underline{r}}=[r^{-1}]$ and we see that $\varphi$ is
surjective. By Lemma~\ref{l:faithfullocalisation}, it is also injective, so
$$\End_{\CX}(T)\simeq \End_{\CXR}(T).$$
The result follows, since the only homomorphisms from $T$ to objects
in $\X_T$ are zero by definition.
\end{proof}

\begin{theorem} \label{t:mainresult2}
Let $\C$ be a skeletally small, Hom-finite, Krull-Schmidt triangulated category with Serre
duality, containing a rigid object $T$. Let $\X_T$ denote the class of objects $X$ in $\C$ such
that $\Hom_{\C}(T,X)=0$. Let $\R$ denote the class of regular morphisms in $\CX$
and $\CXR$ the localisation of the integral category $\CX$ at $\R$.
Then
$$\CXR \simeq \module \End_{\C}(T)^{\op}.$$
\end{theorem}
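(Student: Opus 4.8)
The plan is to invoke the standard characterisation of module categories: an abelian category $\A$ with enough projectives is equivalent to $\module \End_{\A}(P)^{\op}$ whenever $P$ is a projective generator (i.e.\ $\add P$ is precisely the class of projectives), provided $\A$ has the appropriate finiteness properties; the equivalence is given by the functor $\Hom_{\A}(P,-)$. We have assembled all the ingredients: by Theorem~\ref{t:mainresult1} the category $\CXR$ is abelian, by Lemma~\ref{l:localisationenoughprojectives}(b) it has enough projectives, by Lemma~\ref{l:localisationenoughprojectives}(a) the projectives are exactly $\add T$, so $T$ is a projective generator, and by Lemma~\ref{l:endomorphismalgebraiso} we have $\End_{\CXR}(T)\simeq \End_{\C}(T)$, hence $\End_{\CXR}(T)^{\op}\simeq \Gamma$.

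First I would record that $\CXR$ is a $k$-linear abelian category (Remark~\ref{r:inheritskadditivity}) which is $\Hom$-finite: every object is a quotient of some $T_0\in\add T$, and $\Hom_{\CXR}(T_0,-)$ takes values in finitely generated $\End_{\CXR}(T_0)$-modules, which are finite-dimensional over $k$ since $\End_{\CXR}(T_0)\cong\End_{\C}(T_0)$ is finite-dimensional; combined with enough projectives this shows $\CXR$ is a length category in which every object has a projective presentation $T_1\to T_0\to X\to 0$ with $T_0,T_1\in\add T$. Then I would define the functor
$$
F=\Hom_{\CXR}(T,-)\colon \CXR \to \module \Gamma,
$$
noting it lands in finite-dimensional modules and is exact on the relevant presentations because $T$ is projective. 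The verification that $F$ is an equivalence is the classical argument: $F$ sends $T$ to $\Gamma$ and restricts to an equivalence $\add T \simeq \add \Gamma = \operatorname{proj}\Gamma$ by Lemma~\ref{l:endomorphismalgebraiso}; since every object of $\CXR$ has a presentation by objects of $\add T$ and every $\Gamma$-module has a presentation by projectives, one extends the equivalence from projectives to all objects by applying $F$ to a projective presentation $T_1\to T_0\to X\to 0$ and using the five lemma to compare cokernels. Fully faithfulness follows because $F$ is fully faithful on $\add T$ and both categories have enough projectives with $F$ right exact; density follows since $F$ is dense onto $\operatorname{proj}\Gamma$ and every $\Gamma$-module is a cokernel of a map of projectives, which lifts through $F$.

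I would present this via the following steps in order: (i) state the finiteness facts for $\CXR$ just described; (ii) define $F=\Hom_{\CXR}(T,-)$ and check it is well defined into $\module\Gamma$ using Lemma~\ref{l:endomorphismalgebraiso}; (iii) show $F|_{\add T}$ is an equivalence onto $\operatorname{proj}\Gamma$; (iv) show $F$ is exact on projective presentations and right exact in general, hence preserves cokernels of maps between projectives; (v) deduce full faithfulness and density by the projective-presentation/five-lemma argument, concluding $\CXR\simeq\module\Gamma=\module\End_{\C}(T)^{\op}$. The main obstacle I expect is step (i)--(iv): one must be careful that the localisation $\CXR$ really is $\Hom$-finite and that the projective presentations from Lemma~\ref{l:localisationenoughprojectives} behave well (in particular that a surjection $\underline{p}\colon T_0\to X$ in $\CX$ becomes an epimorphism in $\CXR$, which is Lemma~\ref{l:epicharacterisation}, and that kernels of such maps are again suitably approximated by $\add T$ so presentations exist), and to handle the general morphism $[f]x_r$ in $\CXR$ one uses that $x_r=[r^{-1}]$ whenever the target is projective but in general must argue via the calculus of fractions that $F$ is still full. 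Everything else is the textbook Morita-type argument and should be routine.
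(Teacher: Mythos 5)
Your proposal is correct and takes essentially the same route as the paper: the paper's proof is precisely the combination of Theorem~\ref{t:mainresult1}, Lemma~\ref{l:localisationenoughprojectives} and Lemma~\ref{l:endomorphismalgebraiso}, concluding that $T$ is a projective generator and that $\Hom_{\CXR}(T,-)$ gives the equivalence with $\module\End_{\C}(T)^{\op}$. You merely spell out the standard Morita-type (projectivisation) details, including the $\Hom$-finiteness of $\CXR$, which the paper leaves implicit.
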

\begin{proof}
By Theorem~\ref{t:mainresult1}, $\CXR$ is an abelian category.
By Lemma~\ref{l:localisationenoughprojectives},
$\CXR$ has enough projectives, given by the objects in $\add T$.
The result follows, with an equivalence being given by the functor $\Hom_{\CXR}(T,\, -)$,
noting that $T$ is a projective generator for $\CXR$.
\end{proof}

We note that, by Remark~\ref{r:inheritskadditivity}, $\CXR$ inherits a $k$-additive
structure from $\CX$. It is easy to see that the above equivalence preserves this
structure (as well as the abelian structure).

\section{Cotorsion pairs}
\label{s:cotorsionpairs}

We recall the notion of a \emph{cotorsion pair} in a triangulated category,
considered in Nakaoka~\cite{nakaoka}. By~\cite[2.3]{nakaoka} this can be
defined as a pair $(\U,\V)$ of full additive subcategories satisfying
\begin{enumerate}
\item[(a)] $\U^{\perp}=\V$;
\item[(b)] $\V^{\perp}=\U$;
\item[(c)] For any object $C$, there is a (not necessarily unique) triangle:
$$U \to C \to \Sigma V \to \Sigma U,$$
with $U\in \U$ and $V\in \V$.
\end{enumerate}

Nakaoka points out that $(\U,\V)$ is a cotorsion pair in this sense if and
only if $(\U,\Sigma \V)$ is a torsion theory in the sense of~\cite[2.2]{iy}.

If $\C$ is a triangulated category as in Section~\ref{s:notation} and $T$ is
a rigid object in $\C$ then $(\add T,T^{\perp})$ is a cotorsion pair
(e.g.\ see~\cite[Sect.\ 6]{bm10}).
One might then ask whether Theorem~\ref{preab} can be generalised to
this set-up. 
However, it is easy to see that this cannot be the case.
Consider a triangulated category $\C$, satisfying our usual
assumptions. Assume that $\C$ has a non-zero nonisomorphism
between two indecomposables.
Then this map does not have a cokernel or kernel in $\C$.
But the pair $(\U,\V) = (\C,0)$ is
clearly a cotorsion pair. This gives many examples of cotorsion
pairs $(\U,\V)$ such that $\C/\V$ is not preabelian.

More interesting examples where $\C/\V$ is not preabelian also exist.
Let $\C$ be the cluster category associated to the quiver:
$$1 \leftarrow 2 \leftarrow 3.$$
For a vertex $i$, let $P_i$ (respectively, $I_i$, $S_i$)
denote the corresponding indecomposable projective (respectively,
injective, simple) module.
Let $\U$ be the additive subcategory whose indecomposable objects are
$P_2,P_3$ and $\Sigma P_3$. It is easy to check that
$\U^{\perp}$ is the additive subcategory with indecomposables given by
$P_1,P_2$ and $S_2$ and that $(\U,\U^{\perp})$ is a cotorsion pair.
Note that the torsion pair $(\U,\Sigma\U^{\perp})$ appears in~\cite{hjr}.

Let $f$ be a non-zero map from $P_3$ to $I_2$. Suppose that
$c\colon I_2\rightarrow C$ is a cokernel of $f$ in $\C/\U^{\perp}$.
Since the only non-zero
maps $g \colon I_2\rightarrow Y$ with $Y$ indecomposable such that $gf=0$ have
$Y=\Sigma P_1$ or $Y=\Sigma P_2$, it follows that $C$ is a direct sum of
copies of $\Sigma P_1$ and $\Sigma P_2$.
Then $dc=0$ for any non-zero map $d$ from $C$ to $\Sigma P_3$, a contradiction
to the fact that $c$ is an epimorphism.

\section{The functor $\Hom_{\C}(T, - ) \colon \C \to \module  \Gamma$}
\label{s:diagrams}
Let $T$ denote a rigid object in a triangulated category
$\C$, where $\C$ satisfies the same properties as earlier, and let
$\Gamma= \End_{\C}(T)^{op}$.
We have seen that we can obtain $\module \Gamma$ in a process consisting
of two steps: first forming the 
preabelian factor category $\C/\X_T$, and then localising this category with
respect to the class of regular morphisms. 

In \cite{bm10} we considered the functor $H = \Hom_{\C}(T, - ) \colon \C
\to \module \Gamma$. Let  
$\S= \S_T$ be the collection of maps $f \colon X \to Y$ in $\C$ with the property
that in
the induced triangle  $$\Sigma^{-1} Z  \overset{h}{\to}  X \overset{f}{\to} Y \overset{g}{\to} Z,$$
both $g$ and $h$ factor through $\X_T$. Let $L_{\S} \colon \C \to
\C_{\S}$ denote the Gabriel-Zisman localisation.
We proved in \cite{bm10} that there is an equivalence $G \colon \C_{\S} \to
\module \Gamma$ such that $ G L_{\S} = H$. We also proved that a map $s$
belongs to $\S$ if and only if $H(s)$ is an isomorphism in $\module \Gamma$.

In this section, we point
out that the functor $H$ is actually naturally equivalent to the
composition of the quotient functor $\C \to \C/ \X_T$ and the
localisation functor with respect to regular morphisms.

Consider the set of maps $\S_0 = \{X \amalg U \to X  \mid U \in \X_T, X \in \C
\} $ and the Gabriel-Zisman localisation $L_{\S_0} \colon \C \to \C_{\S_0}$.
The quotient functor $Q \colon \C \to \C/\X_T$ inverts all maps in
$\S_0$, so there is a functor $G_0 \colon  \C_{\S_0} \to \C/\X_T$,
making the following diagram commute.

$$
\xymatrix{
\C \ar [dr]_{L_{\S_0}} \ar[rr]^{Q} & & \C/ \X_T\\
& \C_{\S_0} \ar@{..>}[ur]_{G_0}  &
}
$$

The localisation functor $L_{\S_0}$ has the following elementary properties.

\begin{lemma}\label{elem}
\begin{itemize}
\item[(a)]
For $U$ in $\X_T$, consider the projection map $\pi_X \colon X
\amalg U \to X$. The inverse of $L_{\S_0}(\pi_X)$ is
$L_{\S_0}(\iota_X)$,
where $\iota_X \colon X \to X \amalg U$ is the canonical inclusion
map.
\item[(b)] Let $u,v$ be maps in $\C$ such that $v$ factors through $\X_T$.
Then $L_{\S_0}(u+v) = L_{\S_0}(u)$ in $\C_{\S_0}$.
\end{itemize}
\end{lemma}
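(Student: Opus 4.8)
The plan is to prove both parts by exploiting the fact that $\C_{\S_0}$ is a Gabriel--Zisman localisation, but \emph{not} by verifying a calculus of fractions for $\S_0$; instead I would argue directly from the universal property. For part~(a), the key observation is that $\pi_X \iota_X = \id_X$, so $L_{\S_0}(\pi_X) L_{\S_0}(\iota_X) = \id_X$ in $\C_{\S_0}$. It remains to check $L_{\S_0}(\iota_X) L_{\S_0}(\pi_X) = \id_{X \amalg U}$. Since $L_{\S_0}(\pi_X)$ is invertible in $\C_{\S_0}$ by construction (as $\pi_X \in \S_0$), we may cancel it: from $L_{\S_0}(\pi_X)\bigl(L_{\S_0}(\iota_X)L_{\S_0}(\pi_X)\bigr) = L_{\S_0}(\pi_X \iota_X \pi_X) = L_{\S_0}(\pi_X) = L_{\S_0}(\pi_X)\id_{X\amalg U}$, invertibility of $L_{\S_0}(\pi_X)$ forces $L_{\S_0}(\iota_X)L_{\S_0}(\pi_X) = \id_{X\amalg U}$. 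Hence $L_{\S_0}(\iota_X)$ is a two-sided inverse of $L_{\S_0}(\pi_X)$, as claimed.

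For part~(b), suppose $u \colon X \to Y$ and $v \colon X \to Y$ with $v$ factoring as $X \xrightarrow{v_1} U \xrightarrow{v_2} Y$ for some $U \in \X_T$. I would introduce the map $w = \begin{psmallmatrix} u \\ v_1 \end{psmallmatrix} \colon X \to Y \amalg U$, so that $u = \pi_Y w$ and $u + v = (\id_Y, v_2) \circ w' $ where $w'$ expresses $u+v$ suitably; more cleanly, note $u+v = \pi_Y\, \delta\, w$ where $\delta \colon Y\amalg U \to Y\amalg U$ is the automorphism $\begin{psmallmatrix} \id_Y & v_2 \\ 0 & \id_U \end{psmallmatrix}$, since $\pi_Y \delta \begin{psmallmatrix} u \\ v_1\end{psmallmatrix} = \pi_Y \begin{psmallmatrix} u + v_2 v_1 \\ v_1 \end{psmallmatrix} = u + v$, whereas $u = \pi_Y \begin{psmallmatrix} u \\ v_1 \end{psmallmatrix}$. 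Applying $L_{\S_0}$ and using that $L_{\S_0}(\pi_Y)$ is invertible with inverse $L_{\S_0}(\iota_Y)$ from part~(a), it suffices to show $L_{\S_0}(\pi_Y) = L_{\S_0}(\pi_Y)L_{\S_0}(\delta)$, i.e.\ that $L_{\S_0}(\delta)$ becomes the identity after composing with $L_{\S_0}(\pi_Y)$; equivalently, that $L_{\S_0}(\pi_Y \delta) = L_{\S_0}(\pi_Y)$. But $\pi_Y \delta - \pi_Y = (0, v_2)\colon Y\amalg U \to Y$ factors through $U \in \X_T$, so this is an instance of the very statement being proved with source $Y\amalg U$ — a minor circularity I would break by handling the special case where the factoring map has source a coproduct with an $\X_T$-summand first, or more simply by proving a slightly more general normal-form statement.

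The cleanest route, which I would actually write up, avoids this bookkeeping entirely: factor the localisation $L_{\S_0}$ through the quotient functor $Q$ and argue on that side. Concretely, for (b) it is enough to show that $L_{\S_0}(u+v)$ and $L_{\S_0}(u)$ have the same image under \emph{every} functor $F\colon \C \to \D$ inverting $\S_0$; by the universal property this forces equality in $\C_{\S_0}$. Now any such $F$ factors through $Q$ (as in the commuting triangle preceding the lemma), and $Q(u+v) = Q(u)$ since $v$ factors through $\X_T$ and is therefore killed in $\C/\X_T$. Hence $F(u+v) = F(u)$, giving (b). Part~(a) similarly reduces to the observation that $Q(\pi_X)$ is invertible with inverse $Q(\iota_X)$, which is immediate since $\iota_X$ and $\pi_X$ become mutually inverse in $\C/\X_T$ (the off-diagonal discrepancy $\iota_X\pi_X - \id_{X\amalg U}$ factors through $U$). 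The main obstacle is purely expository: choosing a formulation that sidesteps the apparent circularity above, and I expect the universal-property argument to be the shortest honest way to do that.
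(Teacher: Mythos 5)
Part (a) of your proposal is correct and complete: $L_{\S_0}(\iota_X)$ is a right inverse of the invertible morphism $L_{\S_0}(\pi_X)$, hence equals its inverse. The problem is part (b), where the route you actually commit to has a genuine gap. Your key claim is that \emph{every} functor $F\colon \C\to\D$ inverting $\S_0$ factors through the quotient functor $Q\colon \C\to\C/\X_T$, justified ``as in the commuting triangle preceding the lemma''. That triangle gives the opposite factorisation: $Q$ inverts $\S_0$, so $Q=G_0L_{\S_0}$ factors through $L_{\S_0}$. The statement you need --- that an $\S_0$-inverting functor identifies morphisms that are congruent modulo maps factoring through $\X_T$ --- is precisely the content of (b) itself (applied to $F$ in place of $L_{\S_0}$); equivalently it amounts to Proposition~\ref{isocat}, whose proof in the paper uses this very lemma. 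So, as justified, the ``cleanest route'' is circular, and the reduction to arbitrary $F$ buys nothing, since the only instance you need is $F=L_{\S_0}$.

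The good news is that the direct argument you abandoned is fine once the circularity is broken by part (a), and no extra generality is needed. Write $v=v_2v_1$ with $v_1\colon X\to U$, $v_2\colon U\to Y$, $U\in\X_T$. Let $w\colon X\to Y\amalg U$ have components $u$ and $v_1$, let $\pi_Y$ and $\iota_Y$ be the canonical projection and inclusion for $Y\amalg U$, and let $\ell\colon Y\amalg U\to Y$ have components $1_Y$ and $v_2$ (this is your $\pi_Y\delta$). Then $\pi_Yw=u$, $\ell w=u+v$, and both $\pi_Y$ and $\ell$ are left inverses of $\iota_Y$ in $\C$. By (a), $L_{\S_0}(\iota_Y)$ is invertible, so $L_{\S_0}(\pi_Y)=L_{\S_0}(\iota_Y)^{-1}=L_{\S_0}(\ell)$, and hence $L_{\S_0}(u)=L_{\S_0}(\pi_Y)L_{\S_0}(w)=L_{\S_0}(\ell)L_{\S_0}(w)=L_{\S_0}(u+v)$. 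There is no need to invoke statement (b) for the discrepancy $\ell-\pi_Y$, so no circularity arises. For comparison, the paper itself gives no new argument here: it cites the proof of Lemma 3.5 of~\cite{bm10}, which is exactly this kind of direct computation with the maps $\pi$, $\iota$ and $\ell$; your universal-property detour is not a workable substitute unless you first prove the factorisation claim, which requires the same computation anyway.
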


\begin{proof}
The proof is identical to the proof of Lemma 3.5 in \cite{bm10}. 
\end{proof}

By construction, $G_0$ is the identity on objects. It is
clear that $G_0$ is full, since $Q$ has this property.

We claim that $G_0$ is also faithful.
Firstly, note that by
Lemma~\ref{elem}(a), $L_{\S_0}$ is full.
So let
$f$ and $f'$ be maps in $\C$ with $G_0L_{\S_0}(f) = G_0L_{\S_0}(f') $.
Then $H(f) = H(f')$, so $f-f'$ factors through $\X_T$ (by~\cite[Lemma 2.3]{bm10}),
and hence, by Lemma~\ref{elem}, we have that
$L_{\S_0}(f) = L_{\S_0}(f' +f -f')  = L_{\S_0}(f')$. 
Hence, we have the following.

\begin{proposition}\label{isocat}
The induced functor $G_0 \colon \C_{\S_0}\rightarrow \CX$
is an isomorphism of categories.
\end{proposition}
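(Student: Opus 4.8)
The plan is to show $G_0$ is an isomorphism of categories by verifying it is bijective on objects and fully faithful. Since the objects of $\C_{\S_0}$ are the objects of $\C$ (localisation does not change objects), and the objects of $\CX$ are also the objects of $\C$, and $G_0$ is the identity on objects, bijectivity on objects is immediate. It therefore suffices to prove that $G_0$ is full and faithful, and the discussion preceding the statement already supplies both verifications; the proof essentially just assembles these observations.

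First I would note that fullness is already established: $Q = G_0 L_{\S_0}$ is full (the quotient functor $\C \to \CX$ is full by construction, since $\Hom_{\CX}(X,Y)$ is a quotient of $\Hom_{\C}(X,Y)$), and $L_{\S_0}$ is full by Lemma~\ref{elem}(a) together with Lemma~\ref{elem}(b) — every morphism in $\C_{\S_0}$ can be rewritten using the explicit inverses $L_{\S_0}(\iota_X)$ for the maps in $\S_0$, so it lies in the image of $L_{\S_0}$. Since $Q = G_0 L_{\S_0}$ is full and $L_{\S_0}$ is surjective on morphisms (up to the identifications just described), it follows that $G_0$ is full.

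For faithfulness, I would argue exactly as in the paragraph before the statement: suppose $G_0 L_{\S_0}(f) = G_0 L_{\S_0}(f')$ for morphisms $f, f' \colon X \to Y$ in $\C$. Identifying $G_0 L_{\S_0}$ with $Q$ and hence with $H = \Hom_{\C}(T,-)$ via the equivalence of \cite{bm10}, we get $H(f) = H(f')$, so by \cite[Lemma 2.3]{bm10} the difference $f - f'$ factors through $\X_T$. Then Lemma~\ref{elem}(b) gives $L_{\S_0}(f) = L_{\S_0}(f' + (f - f')) = L_{\S_0}(f')$. Since every morphism of $\C_{\S_0}$ is of the form $L_{\S_0}(g)\circ L_{\S_0}(\iota)^{-1}$ for suitable $g$ and a map $\iota$ inverting an element of $\S_0$, and $G_0$ respects these inverses, the equality $G_0(\alpha) = G_0(\beta)$ for general morphisms $\alpha, \beta$ of $\C_{\S_0}$ reduces to the case of morphisms in the image of $L_{\S_0}$, which we have just handled. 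Hence $G_0$ is faithful.

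I expect the only genuinely delicate point to be the bookkeeping that reduces the faithfulness check from arbitrary morphisms of $\C_{\S_0}$ to morphisms in the image of $L_{\S_0}$; this is where one really uses that $G_0$ commutes with $L_{\S_0}$ and $Q$ and that $L_{\S_0}(\iota_X)$ is sent by $G_0$ to the (honest) inverse of $Q(\pi_X)$ in $\CX$. Everything else is a direct consequence of Lemma~\ref{elem} and the results of \cite{bm10} already recalled, so the proof is short; indeed, since the main verifications were carried out in the text immediately above, the proof can legitimately be phrased as: ``This is a consequence of the preceding discussion together with Lemma~\ref{elem}.''
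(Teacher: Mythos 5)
Your proposal is correct and takes essentially the same route as the paper: $G_0$ is the identity on objects, it is full because $Q=G_0L_{\S_0}$ is full, and it is faithful by using Lemma~\ref{elem}(a) to reduce to morphisms in the image of $L_{\S_0}$ and then applying \cite[Lemma 2.3]{bm10} together with Lemma~\ref{elem}(b). The paper's proof is precisely the discussion preceding the statement, so your concluding remark that the proposition follows from that discussion matches the paper exactly.
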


By Lemma~\ref{diagonals}, a morphism $\underline{f}$ in $\CX$ is
regular if and only if $f$ lies in $\S$. Combining this with
Proposition \ref{isocat} we see that the image $L_{\S_0}(\S)$
in the preabelian category $\C_{\S_0}$ consists of exactly the regular morphisms.

Let $\R$ denote the regular morphisms in $\C/\X_T$.
By the universal property of localisation, it follows that 
we also get an induced isomorphism of
categories
$$K \colon (\C_{\S_0})_{L_{\S_0}(\S)} \to (\C/\X_T)_{\R}$$
making the following diagram commute.
$$
\xymatrix{
\C/\X_T \ar[rr]^{L_{\R}}  \ar@<1ex>[dd]^{G_0^{-1}} & & (\C/\X_T )_{\R}
\ar@<1ex>[dd]^{K^{-1}} & 
& & & \\
& & & \\
\C_{\S_0}  \ar[rr]^{L_{L_{\S_0(\S)}}}   \ar[uu]^{G_0} & & ({\C_{\S_0}})_{L_{\S_0(\S)}}
\ar[uu]^{K}
}
$$
It is clear that $H$ factors uniquely through $Q$, and hence by the universal
property of localisation, also uniquely through $L_{\R} \colon \CX \to \CXR$,
so we have a commutative diagram of functors
$$
\xymatrix{
& & &\\
\C \ar[r]_(0.4)Q \ar@/^2pc/[rrr]^H & \C/\X_T \ar[r]_(0.4){L_{\R}}  &  (\C/\X_T)_{\R} \ar[r]_(0.4){H'}
& \module \End_{\C}(T)^{\op}  
}
$$

\begin{lemma} \label{l:nateq}
The functor $H'$ is naturally isomorphic to the functor $\Hom_{\CXR}(T,-)$
which gives an equivalence between $\CXR$ and $\module \End_{\C}(T)^{\op}$ in
Theorem~\ref{t:mainresult2}.
\end{lemma}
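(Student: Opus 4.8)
The plan is to exhibit a natural isomorphism $\eta\colon H'\Rightarrow \Hom_{\CXR}(T,-)$ directly, using that $H = H'\circ L_{\R}\circ Q$ (by construction of $H'$) and that $L_{\R}\circ Q$ is the identity on objects. For an object $Y$ of $\CXR$ this last fact gives $H'(Y)=H(Y)=\Hom_{\C}(T,Y)$ as objects of $\module\Gamma$, and the natural candidate for $\eta_Y\colon H'(Y)\to\Hom_{\CXR}(T,Y)$ is the map $\alpha\mapsto[\,\underline{\alpha}\,]=(L_{\R}\circ Q)(\alpha)$ induced by the composite localisation functor. So the work is to show each $\eta_Y$ is an isomorphism of $\Gamma$-modules and that $\eta$ is natural.

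That $\eta_Y$ is a $\Gamma$-module homomorphism is formal: $L_{\R}\circ Q$ is additive (indeed $k$-linear), and it intertwines the module structures because it is a functor and, by the proof of Lemma~\ref{l:endomorphismalgebraiso}, the identification $\Gamma=\End_{\C}(T)^{\op}\simeq\End_{\CXR}(T)^{\op}$ underlying the target of $\Hom_{\CXR}(T,-)$ in Theorem~\ref{t:mainresult2} is precisely the one induced by $L_{\R}\circ Q$. Injectivity follows from Lemma~\ref{l:faithfullocalisation}: if $[\,\underline{\alpha}\,]=0$ then $\underline{\alpha}=0$ in $\CX$, so $\alpha\colon T\to Y$ factors through $\X_T$, and since $\Hom_{\C}(T,\X_T)=0$ we get $\alpha=0$. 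For surjectivity I would "clear denominators": write an element of $\Hom_{\CXR}(T,Y)$ as a right fraction $[\underline{f}]x_{\underline{r}}$ with $\underline{r}\colon A\to T$ regular in $\CX$; since $T$ is projective in $\CX$ (Lemma~\ref{l:addTprojective}), $\underline{r}$ is an isomorphism by Lemma~\ref{l:regularisomorphism}, so $x_{\underline{r}}=L_{\R}(\underline{r}^{-1})$ and the fraction equals $(L_{\R}\circ Q)(g)=\eta_Y(g)$ for any lift $g\colon T\to Y$ of $\underline{f}\,\underline{r}^{-1}$ along the full functor $Q$.

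For naturality, the square $\eta_{Y'}\circ H'(\phi)=\Hom_{\CXR}(T,\phi)\circ\eta_Y$ holds whenever $\phi=(L_{\R}\circ Q)(g)$ for a morphism $g$ of $\C$, since then $H'(\phi)=H(g)=\Hom_{\C}(T,g)$ and the identity reduces to functoriality of $L_{\R}\circ Q$. Every morphism of $\CXR$ has the form $(L_{\R}\circ Q)(f)\circ\big((L_{\R}\circ Q)(r)\big)^{-1}$ with $Q(r)$ regular (so $(L_{\R}\circ Q)(r)$ is invertible in $\CXR$, with inverse $x_{Q(r)}$); the class of morphisms for which the naturality square holds is closed under composition and under inverting an isomorphism, so it contains all morphisms of $\CXR$. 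This yields the desired natural isomorphism $\eta$, and composing with the equivalence of Theorem~\ref{t:mainresult2} identifies $H'$ with that equivalence.

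I do not anticipate a genuine obstacle; the step needing the most care is bookkeeping the $\Gamma$-module structures, i.e.\ confirming that the identification $\End_{\C}(T)\simeq\End_{\CXR}(T)$ of Lemma~\ref{l:endomorphismalgebraiso} is the one induced by $L_{\R}\circ Q$, so that each $\eta_Y$ is honestly $\Gamma$-linear rather than merely additive. After that is pinned down, the surjectivity ("clear denominators", using projectivity of $T$ in $\CX$) and the propagation of naturality from the image of $L_{\R}\circ Q$ to all of $\CXR$ are routine.
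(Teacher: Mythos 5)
Your proposal is correct and takes essentially the same route as the paper: the same componentwise map $\alpha\mapsto[\,\underline{\alpha}\,]$ induced by $L_{\R}\circ Q$, with bijectivity and $\Gamma$-linearity obtained exactly as in Lemma~\ref{l:endomorphismalgebraiso} (faithfulness of the localisation, projectivity of $T$ in $\CX$ so that regular maps into $T$ are isomorphisms, and $\Hom_{\C}(T,\X_T)=0$). The only difference is presentational: the paper verifies naturality by computing both sides of the square directly on an arbitrary left fraction $x_{\underline{r}}[\,\underline{f}\,]$, while you check it on morphisms in the image of $L_{\R}\circ Q$ and extend by closure under composition and inversion of isomorphisms, which is an equally valid bookkeeping of the same fact.
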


\begin{proof}
Firstly, we note that the map $\varphi_X:f\mapsto [\underline{f}]$ gives
an isomorphism from $H'(X)=\Hom_{\C}(T,X)$ to $\Hom_{\CXR}(T,X)$ as $\End_{\C}(T)^{\op}$-modules
(arguing as in the proof of Lemma~\ref{l:endomorphismalgebraiso}).
Secondly, let $u=x_{\underline{r}}[\underline{f}]\in \Hom_{\CXR}(X,Y)$ be an arbitrary morphism,
where $f:X\to Z$ and $r:Y\to Z$ are morphisms in $\C$ for some object $Z$ in $\C$ and $\underline{r}$
is regular in $\CX$.
Consider the diagram:
$$\xymatrix{
H'(X) \ar_{H'(u)}[d] \ar^(0.35){\varphi_X}[r] & \Hom_{\CXR}(T,X) \ar^{\Hom_{\CXR}(T,u)}[d] \\
H'(Y) \ar_(0.35){\varphi_Y}[r] & \Hom_{\CXR}(T,Y).
}$$
Let $\alpha\in H'(X)=\Hom_{\C}(T,X)$. Then:
\begin{align*}
H'(u)(\alpha) &= H'(x_{\underline{r}}[\underline{f}])(\alpha) \\
&= H'(x_r)H'([\underline{f}])(\alpha) \\
&= H'(x_r)H(f)(\alpha) \\
&= H'(x_{\underline{r}})(f\alpha) \\
&= \Hom_{\C}(T,r)^{-1}(f\alpha)=g,
\end{align*}
where $f\alpha=rg$.
Hence $\varphi_Y(H'(u)(\alpha))=[\underline{g}]$.
We also have
\begin{align*}
\Hom_{\CXR}(T,u)(\varphi_X(\alpha)) &=
   \Hom_{\CXR}(T,x_{\underline{r}}[\underline{f}])([\underline{\alpha}]) \\
&= x_{\underline{r}}[\underline{f}][\underline{\alpha}] \\
&= x_{\underline{r}}[\underline{r}][\underline{g}] = [\underline{g}],
\end{align*}
so the diagram commutes and the lemma is proved.
\end{proof}

It follows that $H'$ is an equivalence.

We also have a commutative diagram of functors
$$
\xymatrix@C1.3cm{
\C \ar[r]^(0.4){L_{\S_0}} \ar@/_2pc/[rrr]_{L_{\S}} & \C_{\S_0} \ar[r]^(0.4){L_{L_{\S_0(\S)}}}  & (\C_{\S_0})_{L_{\S_0}(\S)} \ar[r]^(0.6){L'}  & \C_{\S}  \\
& & &
}
$$
in which $L'$ is an isomorphism of categories.
This follows from the universal property satisfied by the
localisation functors involved.

Let $G^{-1}$ denote a quasi-inverse of $G$. Summarising, we have:

\begin{proposition}
We have the following diagram of functors.
The diagram commutes, apart from the rightmost square, which commutes only
up to natural isomorphism.
$$
\xymatrix{
& & & & & \\
& & \C/\X_T \ar[rr]^{L_{\R}}  \ar@<1ex>[dd]^{G_0^{-1}} & & (\C/\X_T )_{\R} \ar[r]^(0.4){H'}_(0.4){\simeq}
\ar@<1ex>[dd]^{K^{-1}} & 
\module \End_{\C}(T)^{\op}  \ar@<1ex>[dd]^{G^{-1}} \\
\C \ar[urr]_(0.4){Q} \ar[drr]^(0.4){L_{\S_0}}  \ar@/^5pc/[urrrrr]^{H}
\ar@/_5pc/[drrrrr]^{L_{\S}} & & & & & \\
& & \C_{\S_0}  \ar[rr]^{L_{L_{\S_0}(\S)}}   \ar[uu]^{G_0} & & ({\C_{\S_0}})_{L_{\S_0(\S)}}
\ar[r]^(0.6){L'}_(0.6){\cong}  \ar[uu]^K  & \C_{\S}  \ar[uu]^G  \\
& & & & &
}
$$
\end{proposition}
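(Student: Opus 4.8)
The plan is to assemble the commutativity relations already established in this section and then extract the one square that is not yet visibly accounted for. First I would record the identities proved above: $Q = G_0\circ L_{\S_0}$, which is the defining property of $G_0$ together with Proposition~\ref{isocat}; the commutativity $L_{\R}\circ G_0 = K\circ L_{L_{\S_0}(\S)}$ of the middle square, obtained from the universal property of localisation; the factorisation $H = H'\circ L_{\R}\circ Q$ of $H = \Hom_{\C}(T,-)$ through $Q$ and then through $L_{\R}$; the factorisation $L_{\S} = L'\circ L_{L_{\S_0}(\S)}\circ L_{\S_0}$ with $L'$ an isomorphism of categories; and the identity $H = G\circ L_{\S}$ from~\cite{bm10}. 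Together with the fact (noted after Lemma~\ref{l:nateq}) that $H'$ is an equivalence, these relations already say that every triangle and square of the displayed diagram commutes on the nose, with the single exception of the rightmost square relating $H'$, $L'$, $K$ and $G$.

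To handle that square it is enough to prove the equality $H'\circ K = G\circ L'$ of functors from $(\C_{\S_0})_{L_{\S_0}(\S)}$ to $\module\End_{\C}(T)^{\op}$: the asserted commutativity $G^{-1}\circ H' \cong L'\circ K^{-1}$ then follows up to natural isomorphism, the only slack being the natural isomorphism $G^{-1}\circ G\cong\mathrm{id}$ reflecting that $G^{-1}$ is merely a quasi-inverse of $G$. Chaining the recorded identities, on the one hand
$$(H'\circ K)\circ L_{L_{\S_0}(\S)}\circ L_{\S_0} = H'\circ L_{\R}\circ G_0\circ L_{\S_0} = H'\circ L_{\R}\circ Q = H,$$
and on the other hand
$$(G\circ L')\circ L_{L_{\S_0}(\S)}\circ L_{\S_0} = G\circ L_{\S} = H.$$
Hence $H'\circ K$ and $G\circ L'$ agree after precomposition with $P := L_{L_{\S_0}(\S)}\circ L_{\S_0}\colon \C\to(\C_{\S_0})_{L_{\S_0}(\S)}$.

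The main obstacle, and the step I expect to require the most care, is cancelling $P$, i.e.\ deducing $F_1 = F_2$ from $F_1\circ P = F_2\circ P$ for functors out of $(\C_{\S_0})_{L_{\S_0}(\S)}$. I would argue as follows. Both $L_{\S_0}$ and $L_{L_{\S_0}(\S)}$ are the identity on objects, so $P$ is surjective on objects and $F_1$, $F_2$ agree there. Transporting along the isomorphisms $G_0$ and $K$, $P$ corresponds to $L_{\R}\circ Q\colon\C\to(\C/\X_T)_{\R}$, where $Q$ is full and, by Theorem~\ref{t:mainresult1}, $\R$ admits a calculus of right fractions; hence every morphism of $(\C/\X_T)_{\R}$ has the form $[\underline{f}]\,x_{\underline{r}} = L_{\R}(\underline{f})\circ L_{\R}(\underline{r})^{-1}$ with $\underline{f} = Q(f)$, $\underline{r} = Q(r)$ for morphisms $f,r$ of $\C$ and $\underline{r}$ regular. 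Since a functor sends the inverse of an invertible morphism to the inverse, $F_1$ and $F_2$, which agree on all morphisms of the form $L_{\R}(Q(f))$, agree on every morphism, so $F_1 = F_2$. This gives $H'\circ K = G\circ L'$, whence the rightmost square commutes up to natural isomorphism and the diagram is verified. (Equivalently: $P$ is surjective on objects and every morphism of its target is $L_{\R}(Q(f))\circ L_{\R}(Q(r))^{-1}$, so $P$ is an epimorphism among functors out of that target.)
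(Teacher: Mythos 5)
Your proposal is correct and follows essentially the same route as the paper: both reduce the rightmost square to the identity $H'K=GL'$, verified by precomposing with $L_{L_{\S_0}(\S)}\circ L_{\S_0}$ and chaining the previously established commutativities through $H=GL_{\S}$, the other parts of the diagram being already known to commute. The only difference is that the paper cancels the composite localisation functor by invoking the strict universal property of localisation ($JL=J'L$ implies $J=J'$), whereas you verify this cancellation by hand, using surjectivity on objects and the right-fraction form $[\,\underline{f}\,]x_{\underline{r}}$ of morphisms in the target — a harmless elaboration of the same step.
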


\begin{proof}
We have checked above that the diagram commutes apart from the rightmost square.
We recall that, for a localisation functor $L$ and two functors
$J,J'$ composable with it, $JL=J'L$ implies that $J=J'$, by the
universal property. We have that
$$GL'L_{L_{\S_0}(\S)}L_{\S_0}=GL_{\S}=H=H'L_{\R}Q=H'L_{\R}G_0L_{\S_0}=H'KL_{L_{\S_0}(S)}L_{\S_0},$$
so $GL'=H'K$. It follows that $G^{-1}H'$ is naturally equivalent to $L'K^{-1}$.
\end{proof}

We remark that the fact that $L'$ is an isomorphism implies that $H'$ is an equivalence
of categories, by the commutativity of the right hand square. Thus the equivalence
in Theorem~\ref{t:mainresult2} can also be derived from the fact that $G$ is an equivalence
(i.e.~\cite[Theorem 4.3]{bm10}) together with the above analysis. 

\bigskip

\noindent \textbf{Acknowledgements:}
This work was supported by the Engineering and Physical Sciences Research Council
[grant number EP/G007497/1] and by the NFR [FRINAT grant number 196600].
Some of this work was carried out when both authors were visiting the Mathematical
Research Institute (MFO) in Oberwolfach and when RJM was visiting the Institute for
Mathematical Research (FIM) at the ETH in Zurich.

We would like to thank Henning Krause for his helpful comments.
ABB would like to thank RJM, the Algebra, Geometry and Integrable
Systems Group and the School of Mathematics at the University of Leeds for
their kind hospitality during a visit in September 2010, and
RJM would like to thank ABB and the Department of
Mathematics in Trondheim for their kind hospitality during visits in
August-September 2009 and January 2011.

\end{document}